\newtheorem{teor}{Theorem}[section]
\newtheorem{lema}[teor]{Lemma}
\newtheorem{prop}[teor]{Proposition}
\newtheorem{fact}[teor]{Fact}
\newtheorem{defin}[teor]{Definition}
\newtheorem*{claim}{Claim}
\newtheorem*{prop*}{Proposition}
\newtheorem{cor}[teor]{Corollary}
\newtheorem{remark}[teor]{Remark}
\newtheorem{question}[teor]{Question}
\newtheorem{external claim}[teor]{Claim}
\newcommand{\bslant}[2]{{\raisebox{.2em}{$#1$}/\raisebox{-.2em}{$#2$}}}
\newcommand{\C}{{\mathfrak C}}
\newcommand{\R}{\mathbb{R}}
\newcommand{\mL}{\mathcal{L}}
\newcommand{\I}{\mathcal{I}}
\newcommand{\II}{\mathbf{I}}
\newcommand{\fwapp}{F'_{\textrm{WAP}}}
\newcommand{\fwap}{F_{\textrm{WAP}}}
\newcommand{\ftaa}{F'_{\textrm{Tame}}}
\newcommand{\fta}{F_{\textrm{Tame}}}
\DeclareMathOperator{\tp}{{tp}}
\DeclareMathOperator{\aut}{{Aut}}
\DeclareMathOperator{\Sym}{{Sym}}
\DeclareMathOperator{\End}{{End}}
\DeclareMathOperator{\Image}{{Im}}
\DeclareMathOperator{\cl}{{cl}}
\newcommand{\orcidlogo}{\includegraphics[height=\fontcharht\font`\B]{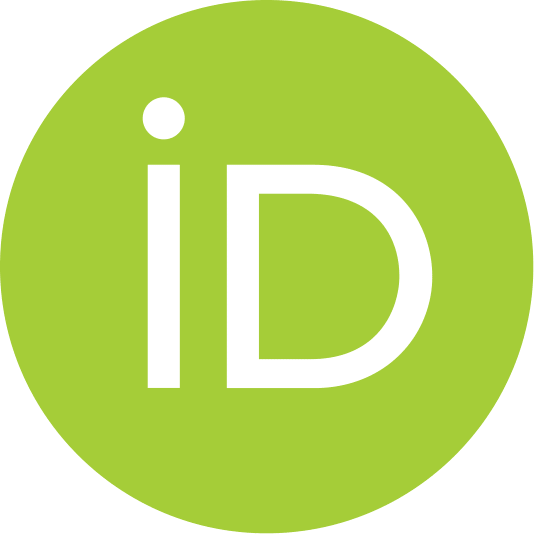}}
\newcommand{\orcid}[1]{\href{#1}{\orcidlogo #1}}
\title{Maximal WAP and tame quotients of type spaces}
\author{Krzysztof Krupi\'{n}ski}
\author{Adri\'{a}n Portillo}
\thanks{\noindent Both authors are supported by the Narodowe Centrum Nauki grant no. 2016/22/E/ST1/00450.} 
\address{Instytut Matematyczny Uniwersytetu Wroc{\l}awskiego, pl. Grunwaldzki 2, 50-384 Wroc{\l}aw, Poland}
\address{Krzysztof Krupi\'{n}ski \orcid{https://orcid.org/0000-0002-2243-4411}}
\email{Krzysztof.Krupinski@math.uni.wroc.pl}
\address{Adri\'{a}n Portillo \orcid{https://orcid.org/0000-0001-9354-8574}}
\email{Adrian.Portillo-Fernandez@math.uni.wroc.pl}
\address{Instytut Matematyczny Uniwersytetu Wroc{\l}awskiego, pl. Grunwaldzki 2, 50-384 Wroc{\l}aw, Poland}
\keywords{WAP, tame, NIP, stability theory, topological dynamics, quotient flow, WAP quotient, tame quotient, hyperimaginary, NIP quotient, stable quotient}
\subjclass[2020]{03C45, 37B02, 37B05.}
\begin{document}
\begin{abstract}
    We study maximal WAP and tame (in the sense of topological dynamics) quotients of $S_X(\C)$, where $\C$ is a sufficiently saturated (called monster) model of a complete theory $T$, $X$ is a $\emptyset$-type-definable set, and $S_X(\C)$ is the space of complete types over $\C$ concentrated on $X$. Namely, let $\fwap\subseteq S_X(\C)\times S_X(\C)$ be the finest  closed, $\aut(\C)$-invariant equivalence relation on $S_X(\C)$ such that the flow $( \aut(\C), S_X(\C)/\fwap )$ is WAP, and let $\fta\subseteq S_X(\C)\times S_X(\C)$ be the finest closed, $\aut(\C)$-invariant equivalence relation on $S_X(\C)$ such that the flow $( \aut(\C), S_X(\C)/\fta )$ is tame. We show good behaviour of $\fwap$ and $\fta$ under changing the monster model $\C$. Namely, we prove that if $\C'\succ \C$ is a bigger monster model, $\fwapp$ and $\ftaa$ are the counterparts of $\fwap$ and $\fta$ computed for $\C'$, and $r\colon S_X(\C')\to S_X(\C)$ is the restriction map, then $r[\fwapp]=\fwap$ and $r[\ftaa]=\fta$. Using these results, we show that the Ellis (or ideal) groups of $( \aut(\C), S_X(\C)/\fwap )$ and  $( \aut(\C), S_X(\C)/\fta )$ do not depend on the choice of the monster model $\C$.
\end{abstract}

\maketitle

\section{Introduction}

The project is to study maximal WAP and tame quotients (in the sense of topological dynamics) of type spaces over sufficiently saturated models (in the sense of model theory). In this paper, we show the fundamental property of ``model-theoretic absoluteness'' of Ellis groups of such quotients, i.e., that they do not depend on the model for which they are computed.

Recall that a {\em flow} is pair $(G,X)$, where $G$ is a topological group acting continuously on a compact (Hausdorff) space $X$. 
The essential for this paper terms appearing in this introduction are defined in Section \ref{section: preliminaries}.  

{\em Weakly almost periodic} (or {\em WAP}) {\em flows} form a well-behaved family, playing a fundamental role in topological dynamics with applications to other areas, e.g. to ergodic theory (see \cite{55bad191-0781-3b5e-a214-06bd2e8fc797}). The minimal WAP flows are known to be equicontinuous and have been been classified as homogeneous spaces for compact groups \cite[Chapter 3, Theorem 6]{auslander1988minimal}.

{\em Tame flows}, originating in \cite{Koh95}, form a wider important family, intensively studied in recent two decades by Glasner, Megrelishvili, and others (see e.g. \cite{glasner2007structure, glasner2013representations, glasner2018structure, Glasner2018}). In the case of metrizibale flows, tameness corresponds to one of the two situations in the so-called dynamical Bourgain, Fremlin, Talagrand dichotomy (see \cite[Theorem 3.2]{glasner2004hereditarily}). A deep structure theorem on tame, metrizable, minimal flows was obtained by Glasner in \cite{glasner2018structure}. Tame flows are related to several other areas, e.g. Banach spaces, circularly ordered systems, substitutions and tilings, quasicrystals, cut and project schemes (so to the so-called theory of aperiodic order); see \cite{aujogue2015ellis} and the survey \cite{glasner2013representations}.

From the point of view of this paper, it is important that there exist direct correspondences between model-theoretic {\em stability} and WAP flows (discovered in \cite{yaacov2014model, ben2016weakly}, see also \cite{conant2021stability, hrushovski2021amenability}), and between model-theoretic {\em NIP} and tame flows (discovered independently in \cite{definably_Amenable_nip}, \cite{ibarlucia2016dynamical}, and \cite{Khanaki2020-KHASTN}, and further developed e.g. in \cite{Krupinski2018GaloisGA} and \cite{codenotti2023ranks}). One of the well-known forms of these kind of correspondences is that stability [resp. NIP] of a theory $T$ is equivalent to saying that all flows of the form $(\aut(M),S(M))$ (where $M$ is a model of $T$, $\aut(M)$ is the group of automorphisms of $M$, and $S(M)$ is the space of complete types over $M$ in a fixed tuple of variables) are WAP [resp. tame] (e.g., see \cite[Section 5]{Krupinski2018GaloisGA}).

Stability theory is the core of model theory \cite{shelah1990classification, pillay1996geometric}, and in the last three decades model theory has been focusing on extending the context of stability theory to wider classes of theories, which often requires new ideas and tools, and has applications to other branches of mathematics (e.g. to algebraic geometry and additive combinatorics). And one of the most deeply studied classes of theories extending stable theories is the class of theories with NIP (the non independence property); e.g., see \cite{Shelah2005StronglyDT, MR2373360, hrushovski2011nip, definably_Amenable_nip}.

Around 2005, Newelski came up with an idea of using tools from topological dynamics in order to extend some aspects of stability theory to much wider unstable situations \cite{10.2178/jsl/1231082302}. The point is that various spaces of types can be naturally treated as flows and so become objects of topological dynamics. Since then, the topic has been broadened by a multitude of authors: Ben Yaacov, Chernikov, Hrushovski, Krupi\'nski, Newelski, Pillay, Rzepecki, Simon, and others (e.g., \cite{definably_Amenable_nip, ben2016weakly, Krupiski2017BoundednessAA, KPR18, 959fd4d1-73ad-3b7b-bc69-e998be32d2cf}). This led not only to essentially new results in model theory (e.g. on the complexity of strong types in \cite{KPR18,Krupinski2018GaloisGA}) but also to applications to additive combinatorics in \cite{krupiński2023generalized}.

Both in topological dynamics and in model theory, it is natural to study quotients by suitable equivalence relations. In topological dynamics, these are quotients of flows by closed, invariant equivalence relations (equivalently, homomorphic images of flows). In model theory, these are quotients of a sufficiently saturated model (or a subset of such a model) by nice (definable, type-definable, invariant, ...) equivalence relations.  This includes strong types, hyperimaginary sorts, or quotients of definable groups by various model-theoretic connected components. A particular role is played by quotients by bounded (i.e. with few classes comparing to the degree of saturation of the model) equivalence relations; see e.g. \cite{lascar2001hyperimaginaries, MR2373360, hrushovski2011nip, KPR18}. In recent papers \cite{MR3796277, 10.1215/00294527-2022-0023, krupinski2023maximal}, also maximal stable quotients entered the picture, mostly in the NIP context.

In this paper, we study maximal WAP and tame quotients of flows of the form $(\aut(\C),S_X(\C))$, where  $\C$ is a sufficiently saturated model, $X$ is a $\emptyset$-type-definable set, and $S_X(\C)$ is the space of complete types over $\C$ concentrated on $X$. 

As mentioned above, in various contexts, stability corresponds to WAP and NIP to tameness. We check in Section \ref{section: Stable vs WAP} that indeed the quotient of a $\emptyset$-type-definable set $X$ by a $\emptyset$-type-definable equivalence relation is stable [resp. NIP] if and only if the quotient of the type space $S_X(\C)$ by the corresponding closed, $\aut(\C)$-invariant equivalence relation is WAP [resp. tame]. However, in Proposition \ref{fwap strictly finer}, we observe that for an arbitrary $\emptyset$-type-definable non-stable [resp. IP] set $X$, the finest closed, $\aut(\C)$-invariant equivalence relation on $S_X(\C)$ with WAP [resp. tame] quotient is never induced by a $\emptyset$-type-definable equivalence relation on $X$ (necessarily with stable [resp. NIP] quotient). Thus, one can expect that the maximal WAP [resp. tame] quotient of $X$ captures more information about the theory in question than the quotient induced by the finest $\emptyset$-type-definable, stable [resp. NIP] equivalence relation on $X$.

The {\em Ellis} (or {\em ideal}) {\em groups} of flows play a very important role both in abstract topological dynamics (for example, in general structural theorems about minimal flows (e.g., see \cite{auslander1988minimal, glasner2018structure})) and in model theory (for example, to get new information about model-theoretic invariants such as $G/G^{000}$ or the Lascar Galois group of the theory $T$ denoted by $\textrm{Gal}_L(T)$ (see \cite{KRUPIŃSKI_PILLAY_2017, KPR18, Krupinski2018GaloisGA})), and in recent applications to additive combinatorics in \cite{krupiński2023generalized}. From the model-theoretic perspective,  the so-called Newelski's conjecture (or Ellis group conjecture) turned out to be particularly influential (e.g., see \cite{10.2178/jsl/1231082302, gismatullin2012some, definably_Amenable_nip, KRUPIŃSKI_PILLAY_2017}).

A good model-theoretic notion should be independent of the choice of the (sufficiently saturated) model in which it is defined. If this happens to be true, we say that the notion  is (model-theoretically) {\em absolute}; one can also say that it is an {\em invariant} of the theory in question. 

The main result of \cite{ Krupiski2017BoundednessAA} says that the Ellis group of $S_X(\C)$ is absolute (i.e., does not depend on $\C$). The main theorem of this paper says that the same is true for the Ellis groups of the maximal WAP and tame quotients of $S_X(\C)$. Below are detailed statements; the relevant definitions are given in further sections.

Let $\C \prec \C'$ be models of a complete theory $T$ which are $\kappa$-saturated and strongly $\kappa$-homogeneous, where $\kappa$ is specified in the statements below. 
Let $X$ be a $\emptyset$-type-definable subset of $\C^\lambda$ (where $\lambda$ is a cardinal number). Let $F'$ be a closed, $\aut(\C')$-invariant equivalence relation defined on $S_X(\C')$, and $F$ a closed, $\aut(\C)$-invariant equivalence relation defined on $S_X(\C)$. We say that $F'$ and $F$ are {\em compatible} if $r[F']=F$, where $r\colon S_X(\C')\to S_X(\C)$ is the restriction map.

  Let $\fwap\subseteq S_X(\C)\times S_X(\C)$ be the finest closed, $\aut(\C)$-invariant equivalence relation on $S_X(\C)$ such that the flow $( \aut(\C), S_X(\C)/\fwap )$ is WAP, and let $\fta\subseteq S_X(\C)\times S_X(\C)$ be the finest closed, $\aut(\C)$-invariant equivalence relation on $S_X(\C)$ such that the flow $( \aut(\C), S_X(\C)/\fta )$ is tame. 

Here is the main result of this paper.

\begin{teor}\label{intro: compatible fwap and fta}
    The Ellis groups of the $\aut(\C)$-flows $S_X(\C)/\fwap$ and  $S_X(\C)/\fta$ (treated as groups with the $\tau$-topology) do not depend on the choice of $\C$ as long as $\C$ is $(\aleph_0+\lambda)^+$-saturated and strongly $(\aleph_0+\lambda)^+$-homogeneous.
\end{teor}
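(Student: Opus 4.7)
The plan is to reduce the theorem to a compatibility statement for the equivalence relations themselves across different monster models. Specifically, for $\C \prec \C'$ (both sufficiently saturated and strongly homogeneous), let $r \colon S_X(\C') \to S_X(\C)$ be the restriction map. I would first prove the compatibility results $r[\fwapp] = \fwap$ and $r[\ftaa] = \fta$ (which, as the abstract indicates, are the main technical outputs of the paper). Once these are in hand, one obtains an equivariant continuous bijection between $S_X(\C')/\fwapp$ and $S_X(\C)/\fwap$ in a strong enough sense to transfer Ellis groups; since any two sufficiently saturated models embed into a common larger monster, this yields the stated absoluteness.

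For the compatibility result (focusing on $\fwap$; the tame case is parallel), the inclusion $\fwap \subseteq r[\fwapp]$ follows by observing that $r[\fwapp]$ is a closed $\aut(\C)$-invariant equivalence relation on $S_X(\C)$ (image of a closed equivalence under a continuous closed surjection between compact Hausdorff spaces), and checking that the quotient $S_X(\C)/r[\fwapp]$ is a WAP $\aut(\C)$-flow. This last point is the crux: one should identify it naturally with $S_X(\C')/\fwapp$ via $\bar r$, using that every $\sigma \in \aut(\C)$ extends to $\aut(\C')$ by strong homogeneity. For the reverse inclusion, one lifts $\fwap$ to $(r \times r)^{-1}[\fwap]$ on $S_X(\C')$, shows this is closed and $\aut(\C')$-invariant (immediate from equivariance of $r$), and verifies the resulting quotient is WAP as an $\aut(\C')$-flow. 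Finest-ness of $\fwapp$ then yields $\fwapp \subseteq (r \times r)^{-1}[\fwap]$, i.e., $r[\fwapp] \subseteq \fwap$. The tame case follows by replacing WAP with tame throughout and using the analogous preservation properties.

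Once compatibility holds, the induced continuous surjection $\bar r \colon S_X(\C')/\fwapp \to S_X(\C)/\fwap$ between quotient flows needs to be shown to induce a topological isomorphism of Ellis groups with the $\tau$-topology. The standard technique, used in \cite{Krupiski2017BoundednessAA} to prove absoluteness of the Ellis group of $S_X(\C)$ itself, applies here: choose compatible minimal subflows $\mathcal{M}'$ of the enveloping semigroup of $S_X(\C')/\fwapp$ and $\mathcal{M}$ of that of $S_X(\C)/\fwap$ with matched idempotents $u' \in \mathcal{M}'$, $u \in \mathcal{M}$; verify that the map on enveloping semigroups induced by $\bar r$ sends $\mathcal{M}'$ to $\mathcal{M}$ and $u'$ to $u$; and prove that the resulting group homomorphism $u'\mathcal{M}' \to u\mathcal{M}$ is a topological isomorphism in the $\tau$-topology. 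Continuity and surjectivity are clean; injectivity and $\tau$-continuity of the inverse use the strong homogeneity of $\C'$ over $\C$ to lift $\tau$-close elements.

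The principal obstacle is verifying WAP-ness (respectively tameness) of the two auxiliary quotients produced in the compatibility step: the quotient of $S_X(\C)$ by the pushforward $r[\fwapp]$ and the quotient of $S_X(\C')$ by the pullback $(r \times r)^{-1}[\fwap]$. Both WAP and tame are characterized by properties of the algebra of continuous real-valued functions on the flow (weak compactness of orbit closures for WAP, absence of independent sequences for tame), so the transfer across $r$ amounts to moving such function-theoretic conditions up and down the restriction map. One expects this to rest on strong homogeneity of $\C'$ over $\C$, which controls the fibers of $r$. Tameness is subtler than WAP and may require additional combinatorial input (e.g.\ a Ramsey-style argument) to exclude independent sequences after passing to the extension; this is probably the step where the proof diverges most substantially between the WAP and tame cases.
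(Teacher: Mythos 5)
Your high-level decomposition is exactly the paper's: Theorem \ref{intro: compatible fwap and fta} is deduced from a general transfer theorem for compatible relations (Theorem \ref{compatible implies absolute}) plus the compatibility of $\fwapp$ with $\fwap$ and of $\ftaa$ with $\fta$ (Theorems \ref{fwapp and fwap} and \ref{ftaa and fta}). However, two steps of your outline contain genuine errors, and the step you correctly identify as the ``principal obstacle'' is left unproved even though it is where essentially all of the work lies.

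First, your claim that $(r\times r)^{-1}[\fwap]$ is $\aut(\C')$-invariant ``immediate from equivariance of $r$'' is false. The restriction map $r$ is equivariant only with respect to the subgroup $\aut(\C'/\{\C\})$ of automorphisms preserving $\C$ setwise; a general $\sigma\in\aut(\C')$ does not induce anything on $S_X(\C)$, so $r^{-1}[\fwap]$ need not be $\aut(\C')$-invariant and finest-ness of $\fwapp$ cannot be applied to it. The paper must pass to $F:=\bigcap_{\sigma\in\aut(\C')}\sigma(r^{-1}[\fwap])$ and then prove that $S_X(\C')/F$ is WAP; this is the hard step, carried out via the double-limit criterion, using $(\aleph_0+\lambda)^+$-saturation to move the countably many witnessing realized types into $\C$, strong homogeneity to replace the $\sigma_n$ by automorphisms restricting to $\aut(\C)$, a strong heir extension over the expanded language $\mL_N$ to transfer a function on the $N$-invariant types down to $S_X(\C)$, a content argument to show it factors through $\fwap$, and Tietze extension to contradict WAP-ness of $S_X(\C)/\fwap$. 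Relatedly, your identification of $S_X(\C)/r[\fwapp]$ ``naturally with $S_X(\C')/\fwapp$'' is not correct: the natural homeomorphism is with $S_X(\C')/r^{-1}[r[\fwapp]]$ (a further quotient of $S_X(\C')/\fwapp$), and only as flows for the smaller group $\aut(\C'/\{\C\})$; one then uses that WAP and tameness pass to quotient flows and to subgroups of the acting group.

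Second, the Ellis-group transfer is not the ``standard'' enveloping-semigroup argument you describe, because $\bar r\colon S_X(\C')/\fwapp\to S_X(\C)/\fwap$ is not a morphism of $G$-flows for any single acting group $G$ (the groups $\aut(\C')$ and $\aut(\C)$ differ), so Fact \ref{epimorphism semigroup to group} gives no induced map $E(S_X(\C')/\fwapp)\to E(S_X(\C)/\fwap)$ and no matching of minimal ideals or idempotents. The paper instead constructs a section $s\colon S_X(\C)\to S_X(\C')$ of $r$ from a strong heir extension of the type of a realization of an enumeration of $S_X(\C)$, shows $s$ preserves contents, identifies the ip-minimal sets $I_{\C'}\cong s[I_\C]\cong I_\C$, and transports the Ellis groups through the automorphism groups $\aut(s[I_\C])\cong\aut(I_\C)$ of the infinitary definability patterns structures, checking via a content computation that the kernels of the two induced maps coincide (this is where compatibility of $F'$ and $F$ is actually used) and that all maps are topological quotient maps for the $\tau$- and ipp-topologies. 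Without some substitute for this section and the content machinery, the injectivity and $\tau$-homeomorphism claims in your third paragraph do not follow.
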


In fact, we prove the following general theorem.

\begin{teor}\label{compatible relations teor intro}
    Assume that $\C\prec \C'$ are $\aleph_0$-saturated and strongly $\aleph_0$-homogeneous. If $F'$ and $F$ are compatible equivalence relations respectively on $S_X(\C')$ and $S_X(\C)$, then the Ellis group of the flow $(\aut(\C'),S_X(\C')/F')$ is topologically isomorphic to the Ellis group of the flow  $(\aut(\C),S_X(\C)/F)$.
\end{teor}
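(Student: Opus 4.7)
Let $r\colon S_X(\C')\to S_X(\C)$ be the restriction map, which is continuous and surjective because $\C\prec\C'$. By the compatibility hypothesis $r[F']=F$, $r$ descends to a continuous surjection $\bar r\colon S_X(\C')/F'\to S_X(\C)/F$. My plan is to reduce the statement to the main theorem of \cite{Krupiski2017BoundednessAA}, which gives exactly this absoluteness for the \emph{ambient} type spaces (i.e., when $F'$ and $F$ are the diagonal equivalence relations), and then to show that the $\tau$-topological isomorphism of Ellis groups produced there respects the closed normal $\tau$-subgroups associated with $F'$ and $F$.

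\textbf{Key steps.}
Write $\mathcal G'$ and $\mathcal G$ for the Ellis groups of the ambient flows $(\aut(\C'),S_X(\C'))$ and $(\aut(\C),S_X(\C))$. First I would establish the following general topological-dynamics observation: for any flow $(G,Z)$ and any closed $G$-invariant equivalence relation $E$ on $Z$, the canonical semigroup map $E(Z)\to E(Z/E)$ induces a topological quotient of Ellis groups whose kernel is
\[
N_E:=\{\eta\in u\cdot\mathcal M : \eta(z)\,E\,z\text{ for all }z\in Z\},
\]
a closed normal subgroup with respect to the $\tau$-topology. Applied to the two ambient flows, the target Ellis groups are $\mathcal G'/N_{F'}$ and $\mathcal G/N_F$. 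Second, I would invoke \cite{Krupiski2017BoundednessAA} to obtain a canonical $\tau$-topological isomorphism $\Phi\colon\mathcal G'\to\mathcal G$; its construction should give an equivariance of the form $r(\eta\cdot p')=\Phi(\eta)\cdot r(p')$, after a consistent choice of minimal idempotents in the minimal left ideals of $E(S_X(\C'))$ and $E(S_X(\C))$. The third and decisive step is to show $\Phi(N_{F'})=N_F$, which will immediately yield the required topological isomorphism $\mathcal G'/N_{F'}\cong\mathcal G/N_F$. The inclusion $\Phi(N_{F'})\subseteq N_F$ is straightforward from this equivariance together with surjectivity of $r$: if $\eta p'\,F'\,p'$ for every $p'$, then $r(\eta p')\,F\,r(p')$ for every $p'$, so $\Phi(\eta)\in N_F$.

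\textbf{Main obstacle.}
The reverse inclusion $N_F\subseteq\Phi(N_{F'})$ is where I expect the technical work to concentrate. Assuming $\Phi(\eta)\in N_F$ only yields $r(\eta p')\,F\,r(p')$ for every $p'\in S_X(\C')$; to conclude $\eta p'\,F'\,p'$ requires upgrading $F$-equivalence of restrictions to $F'$-equivalence of the original types over $\C'$. The surjectivity $r[F']=F$ guarantees, for each $p'$, the existence of \emph{some} $q'\in S_X(\C')$ with $q'\,F'\,p'$ and $r(q')=r(\eta p')$, but a priori $q'\neq\eta p'$. The argument has to combine this with the fact that $\eta$ lies in the minimal ideal of $E(S_X(\C'))$, hence is a limit of $\aut(\C')$-translations, together with the $\aut(\C')$-invariance and closedness of $F'$ and the strong $\aleph_0$-homogeneity of $\C'$, in order to replace $q'$ by $\eta p'$ while preserving $F'$-equivalence. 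Once this is in place, $\Phi$ descends to the desired topological isomorphism between the Ellis groups of the two quotient flows.
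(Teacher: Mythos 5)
Your architecture is essentially a repackaging of the paper's own proof: realize each quotient Ellis group as the ambient Ellis group modulo the kernel of the induced epimorphism, and show the kernels correspond under the absoluteness isomorphism. So the reduction itself is sound. There are two problems, one minor and one decisive. The minor one: your kernel $N_E=\{\eta\in u\mathcal{M}:\eta(z)\,E\,z\text{ for all }z\in Z\}$ is wrong as stated --- it need not even contain the identity $u$ of the Ellis group, since $u(z)$ need not be $E$-related to $z$ for $z\notin\Image(u)$. The kernel of $u\mathcal{M}\to u'\mathcal{M}'$ is $\{\eta\in u\mathcal{M}:\eta(z)\,E\,z\text{ for all }z\in\Image(u)\}$. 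This is easily repaired, but it matters, because the whole argument must be run on $\Image(u)$ and $\Image(u')$ rather than on all of $S_X(\C)$ and $S_X(\C')$.

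The decisive gap is the inclusion $N_F\subseteq\Phi(N_{F'})$, which you correctly flag but do not prove, and the ingredients you list will not close it. First, compatibility gives you, from $r(\eta p')\,F\,r(p')$, only \emph{some} pair $s_1\,F'\,s_2$ with $r(s_1)=r(\eta p')$ and $r(s_2)=r(p')$ --- not a $q'$ already $F'$-related to $p'$ itself, so both coordinates must be moved. One therefore has to transport the pair $(s_1,s_2)$ onto $(\eta p',p')$ by an element of $E(S_X(\C'))$ (such elements preserve $F'$, being pointwise limits of automorphisms, with $F'$ closed and invariant). By Fact \ref{content and ellis semigroup} this requires $c(\eta p',p')\subseteq c(s_1,s_2)$; since $c(s_1,s_2)\supseteq c(r(s_1),r(s_2))$ always holds, what is really needed is the equality $c(\eta p',p')=c(r(\eta p'),r(p'))$, i.e., that the types in $\Image(u')$ have the same contents as their restrictions to $\C$. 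The paper arranges this by constructing a section $s\colon S_X(\C)\to S_X(\C')$ from a strong heir extension (Fact \ref{fact: existence of heirs}) and choosing the idempotent $u'$ so that $\Image(u')=s[\Image(u)]$. This content preservation is not a consequence of ``$\eta$ is a limit of $\aut(\C')$-translations'' nor of strong $\aleph_0$-homogeneity alone, and for an arbitrary choice of $u'$ it can fail. The missing idea, then, is the content calculus applied to a strong-heir section; note that this same device is also what makes your claimed equivariance $r(\eta p')=\Phi(\eta)(r(p'))$ and the consistent choice of idempotents concrete in the first place.
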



Theorem \ref{intro: compatible fwap and fta} is a consequence of Theorem \ref{compatible relations teor intro} and the following

\begin{teor}\label{theorem: compatibility for WAP and tame}
    Assume $\C \prec \C'$ are $(\aleph_0+\lambda)^+$-saturated and strongly $(\aleph_0+\lambda)^+$-homogeneous. Then $\fwapp$ is compatible with $\fwap$ and $\ftaa$ is compatible with $\fta$ (where $\fwapp$ and $\ftaa$ are the counterparts of $\fwap$ and $\fta$ computed for $\C')$.
\end{teor}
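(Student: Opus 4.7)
The plan is to prove $r[\fwapp]=\fwap$ (the case of $\fta$ is formally identical, with tameness and its characterisations substituted throughout for WAP). Set $E:=r[\fwapp]$. First I verify $E$ is a closed, $\aut(\C)$-invariant equivalence relation on $S_X(\C)$: closedness follows from compactness of $S_X(\C')$ and continuity of $r$; transitivity uses the standard fact, a consequence of the strong $(\aleph_0+\lambda)^+$-homogeneity of $\C'$ over $\C$, that the fibres of $r$ coincide with the $\aut(\C'/\C)$-orbits on $S_X(\C')$; $\aut(\C)$-invariance uses that every $\sigma\in\aut(\C)$ extends to some $\tilde\sigma\in\aut(\C')$ with $r(\tilde\sigma p')=\sigma\cdot r(p')$.

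For the inclusion $\fwap\subseteq E$ I plan to show that $(\aut(\C),S_X(\C)/E)$ is WAP, which by the defining minimality of $\fwap$ forces the inclusion. Let $H:=\{\sigma'\in\aut(\C') : \sigma'(\C)=\C\}\le \aut(\C')$; restriction to $\C$ yields a surjective group homomorphism $H\twoheadrightarrow\aut(\C)$ with kernel $\aut(\C'/\C)$. The map $r$ descends to a continuous surjection $\tilde r\colon S_X(\C')/\fwapp\twoheadrightarrow S_X(\C)/E$, which is equivariant for the $H$-action on the source (the restriction of the $\aut(\C')$-action) and the $\aut(\C)$-action on the target — the kernel acts trivially on the target because $r$ is $\aut(\C'/\C)$-invariant. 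Since WAPness is inherited both by restrictions of the acting group and by equivariant continuous factors, the target flow is WAP.

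For the inclusion $E\subseteq\fwap$ I plan to use the Gelfand picture of the maximal WAP factor. Let $A$ denote the closed $\aut(\C)$-invariant subalgebra of $\aut(\C)$-WAP continuous functions on $S_X(\C)$, so that $S_X(\C)/\fwap$ is its Gelfand spectrum and $\fwap=\{(p,q):f(p)=f(q)\text{ for all }f\in A\}$; define $A'$ analogously for $\C'$. The inclusion reduces to the single claim that $r^*(A)\subseteq A'$, where $r^*f:=f\circ r$: granting this, for $(p',q')\in\fwapp$ and any $f\in A$, the function $r^*f\in A'$ is $\fwapp$-invariant, whence $f(r(p'))=r^*f(p')=r^*f(q')=f(r(q'))$, giving $(r(p'),r(q'))\in\fwap$.

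The main obstacle will be proving $r^*(A)\subseteq A'$, i.e.\ that the pullback along $r$ of an $\aut(\C)$-WAP (resp.\ tame) function is $\aut(\C')$-WAP (resp.\ tame). I expect to establish this via Grothendieck's iterated-limit criterion for WAP (respectively the Bourgain–Fremlin–Talagrand / Rosenthal $\ell_1$-dichotomy for tame). Given $f\in A$ and arbitrary sequences $(\sigma'_n),(\tau'_m)$ in $\aut(\C')$ together with $p'\in S_X(\C')$ witnessing the existence of both iterated limits of $r^*f$ on $\sigma'_n\tau'_m p'$, the saturation and strong homogeneity of $\C'$ over $\C$ should allow one to realise the doubly-indexed family $\{r(\sigma'_n\tau'_m p')\}_{n,m}\subseteq S_X(\C)$ inside a small model and, modulo $\aut(\C'/\C)$-twisting that $r$ absorbs, replace the $\aut(\C')$-sequences by sequences of extensions of $\aut(\C)$-automorphisms. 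This would reduce the double-limit (resp.\ independence) test for $r^*f$ on the $\C'$-side to the WAP (resp.\ tame) hypothesis on $f$ for $\aut(\C)$. The delicate point — the heart of the proof — is handling $\aut(\C')$-elements that do not preserve $\C$, which is precisely where the full strength of the saturation and homogeneity hypotheses enters.
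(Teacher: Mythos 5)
Your overall architecture (prove $r[\fwapp]=\fwap$ by two inclusions, the easy one by checking $r[\fwapp]$ is a closed invariant equivalence relation with WAP quotient, the hard one by pulling back WAP functions along $r$) matches the paper's, but two steps are genuinely broken or missing. First, your transitivity argument for $E=r[\fwapp]$ rests on the claim that the fibres of $r$ coincide with the $\aut(\C'/\C)$-orbits on $S_X(\C')$. This is false: each orbit is contained in a fibre, but a type $p\in S_X(\C)$ typically has many non-conjugate extensions to $\C'$ (e.g.\ in DLO the heir of the type at $+\infty$ is fixed by all of $\aut(\C')$, so its $\aut(\C'/\C)$-orbit is a singleton, while the fibre over that type is large). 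So from $r(\beta)=r(\beta')$ you cannot conclude $\beta'=\sigma(\beta)$ for $\sigma\in\aut(\C'/\C)$, and transitivity does not follow. The paper circumvents this with the strong-heir/content machinery: it fixes a strong heir extension $\mathfrak{p}'$ of the type of a tuple of realizations of all of $S_X(\C)$, obtains canonical extensions $p_i'$ with $c(p_{i_1}',p_{i_2}')=c(p_{i_1},p_{i_2})\subseteq c(\alpha,\beta)$, and uses Fact \ref{content and ellis semigroup} plus closedness and invariance of $\fwapp$ to transport $\fwapp$ from $(\alpha,\beta)$ and $(\beta',\gamma)$ to the canonical extensions. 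Some such argument is indispensable. (A smaller issue of the same flavour: extending each $\sigma\in\aut(\C)$ to $\aut(\C')$ requires $\C$ to be small in $\C'$, which is only legitimate after the reduction of Remark \ref{remark: reduction to big C'}; you should include that reduction.)

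Second, your reduction of the inclusion $r[\fwapp]\subseteq\fwap$ to the claim $r^*(A)\subseteq A'$ is correct, but that claim \emph{is} the hard content of the theorem, and you leave it as an expectation rather than a proof. The paper's argument for it is substantial: it replaces $r^{-1}[\fwap]$ by its $\aut(\C')$-invariant core $F=\bigcap_{\sigma}\sigma(r^{-1}[\fwap])$ and shows the flow $(\aut(\C'),S_X(\C')/F)$ is WAP; given a double-limit failure witnessed by $(\sigma_n)\subseteq\aut(\C')$ and realized types, it uses $(\aleph_0+\lambda)^+$-saturation to move the countable witness set $N$ into $\C$ and strong homogeneity to replace the $\sigma_n$ by automorphisms preserving $\C$; it then works on the closed set $\mathcal{H}$ of types over $\C$ invariant over $N$, uses a strong heir in the language $\mL_N$ to define a continuous function $h$ on $\mathcal{H}$, proves via a content argument that $h$ factors through $\fwap\!\upharpoonright_{\mathcal{H}}$, and extends by Tietze to contradict WAPness of $S_X(\C)/\fwap$. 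Your sketch correctly identifies where the difficulty lies (automorphisms of $\C'$ not preserving $\C$, and the restriction to a small parameter set), but supplies none of the mechanism — in particular nothing plays the role of $\mathcal{H}$, of the strong heir over $N$, or of the descent of the pulled-back function to a continuous function on $S_X(\C)/\fwap$. As it stands the proposal is an outline whose hardest step is acknowledged but not carried out.
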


The main result of \cite{Krupiski2017BoundednessAA} about absoluteness of the Ellis group of a theory was reproved in \cite{hrushovski2022definability} using a new notion of {\em infinitary definablity patterns structures}. In Section \ref{section: idp}, we take the opportunity and present yet another approach and proof, using infinitary definablity patterns structures together with topological dynamics. This approach is inspired by Pierre Simon's seminar notes on \cite{hrushovski2022definability}. Various facts obtained in Section \ref{section: idp} are then used in the proofs of Theorems \ref{compatible relations teor intro} and \ref{theorem: compatibility for WAP and tame} which are included in Sections \ref{section: compatible quotients} and \ref{section: nip stable tame wap}, respectively. Moreover, the approach from Section \ref{section: idp} has a potential to be adapted to other contexts in the future (e.g., to Keisler measures or general topological dynamics).

In Section \ref{section: nip stable tame wap}, there is also another related (but much easier) application of our general Theorem \ref{compatible relations teor intro}. Namely, let $E^{\textrm{st}}_\emptyset$ [resp. $E^{\textrm{NIP}}_\emptyset$] be the finest $\emptyset$-type-definable equivalence relation on $X$ with stable [resp. NIP] quotient $X/E$. Let $\tilde{E}^{\textrm{st}}_\emptyset$ [resp. $\tilde{E}^{\textrm{NIP}}_\emptyset$] be the induced equivalence relation on $S_X(\C)$, as described after Proposition \ref{remark: pist and piNIP}. By $\tilde{E}'^{\textrm{st}}_\emptyset$ [resp. $\tilde{E}'^{\textrm{NIP}}_\emptyset$] we denote the analogously defined equivalence relation for the model $\C'$.

\begin{prop}
Without any assumption on saturation, 
the relation $\tilde{E}'^{\textrm{st}}_\emptyset$ is compatible with $\tilde{E}^{\textrm{st}}_\emptyset$, and  $\tilde{E}'^{\textrm{NIP}}_\emptyset$ is compatible with  $\tilde{E}^{\textrm{NIP}}_\emptyset$.
\end{prop}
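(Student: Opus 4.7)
The plan is to unfold the definitions and reduce the claim to a short compactness exercise; I treat $\tilde{E}^{\textrm{st}}_\emptyset$, the NIP case being formally identical.

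The key preliminary observation is that $E^{\textrm{st}}_\emptyset$ is $\emptyset$-type-definable and ``having a stable quotient'' is a property intrinsic to the theory $T$, not to the particular monster model in which it is computed; hence $E^{\textrm{st}}_\emptyset$ is defined by one and the same partial type $E(x,y)$ over $\emptyset$ whether one works inside $\C$ or inside $\C'$. In view of the construction of $\tilde{E}^{\textrm{st}}_\emptyset$ recalled after Proposition \ref{remark: pist and piNIP}, both $\tilde{E}^{\textrm{st}}_\emptyset$ and $\tilde{E}'^{\textrm{st}}_\emptyset$ then admit the common concrete description: two types are equivalent iff they have $E$-equivalent realizations; equivalently, $p\,\tilde{E}\,q$ iff the partial type $p(x)\cup q(y)\cup E(x,y)$ is consistent.

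With this in hand, the inclusion $r[\tilde{E}'^{\textrm{st}}_\emptyset]\subseteq \tilde{E}^{\textrm{st}}_\emptyset$ is immediate: if $(p',q')\in\tilde{E}'^{\textrm{st}}_\emptyset$ is witnessed by $a\models p'$, $b\models q'$, $a\,E\,b$ in some elementary extension of $\C'$, then the same pair $(a,b)$ witnesses $(r(p'),r(q'))\in\tilde{E}^{\textrm{st}}_\emptyset$, since $r(p')\subseteq p'$ and $r(q')\subseteq q'$. For the reverse inclusion, fix $(p,q)\in\tilde{E}^{\textrm{st}}_\emptyset$; the partial type $p(x)\cup q(y)\cup E(x,y)$ over $\C$ is consistent, and as all of its parameters lie in $\C\subseteq\C'$, the elementarity of $\C\prec\C'$ shows that every finite subsystem is satisfiable in a suitable elementary extension of $\C'$, so by compactness the whole partial type is consistent over $\C'$. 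Pick a realization $(a,b)$ in some $\C''\succ\C'$ and set $p':=\tp(a/\C')$, $q':=\tp(b/\C')$; then $r(p')=p$, $r(q')=q$, and $(p',q')\in\tilde{E}'^{\textrm{st}}_\emptyset$ via $(a,b)$, giving $(p,q)\in r[\tilde{E}'^{\textrm{st}}_\emptyset]$.

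There is no substantive obstacle; the only point that genuinely needs care is the absoluteness of $E^{\textrm{st}}_\emptyset$ and $E^{\textrm{NIP}}_\emptyset$ as $\emptyset$-type-definable objects, and it is precisely this absoluteness that removes any need for saturation assumptions on $\C$ or $\C'$.
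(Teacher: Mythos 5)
Your proof is correct, and for the substantive direction it takes a genuinely different (and slightly more economical) route than the paper. The paper deduces this statement from a general fact (Proposition \ref{proposition: uniformly type-definable are compatible}): for \emph{any} partial type $\rho(x,y)$ over $\emptyset$ defining an equivalence relation, the induced relations $\tilde{E}'$ and $\tilde{E}$ are compatible. Its proof of the inclusion $\tilde{E}\subseteq r[\tilde{E}']$ first fixes realizations $a\models p$, $b\models q$ with $\rho(a,b)$, then passes to an \emph{heir} extension $\tp(a'b'/\C')$ of $\tp(ab/\C)$ and verifies, by a compactness argument exploiting the heir property, that $\tp(a'/\C')$ and $\tp(b'/\C')$ remain related. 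You instead observe that consistency of the partial type $p(x)\cup q(y)\cup\rho(x,y)$ transfers from $\C$ to $\C'$ directly (each finite fragment has an existential closure true in $\C$, hence in $\C'$, by elementarity), and then take an arbitrary realization over $\C'$; this bypasses heirs entirely and, like the paper's argument, works verbatim for an arbitrary $\emptyset$-type-definable $E$, not just $E^{\textrm{st}}_\emptyset$ and $E^{\textrm{NIP}}_\emptyset$. The one point you flag as needing care --- that $E^{\textrm{st}}_\emptyset$ and $E^{\textrm{NIP}}_\emptyset$ are defined by a single partial type over $\emptyset$ uniformly in the model --- is exactly Proposition \ref{remark: pist and piNIP}, and it is indeed built into the very definitions of $\tilde{E}^{\textrm{st}}_\emptyset$ and $\tilde{E}'^{\textrm{st}}_\emptyset$, which is why no saturation hypothesis is needed. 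In short: both proofs are valid; yours trades the heir machinery for a direct consistency-transfer argument.
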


\begin{cor}
    The Ellis group of $S_X(\C)/\tilde{E}^{\textrm{st}}_\emptyset$ (treated as a topological group with the $\tau$-topology) does not depend on the choice of $\C$ as long as $\C$ is at least $\aleph_0$-saturated and strongly $\aleph_0$-homogeneous.
\end{cor}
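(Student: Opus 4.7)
The plan is to reduce the Corollary to an application of Theorem~\ref{compatible relations teor intro} by passing through a common elementary extension. The preceding Proposition supplies compatibility of $\tilde{E}^{\textrm{st}}_\emptyset$ across elementary extensions with \emph{no} saturation hypothesis on the models involved, whereas Theorem~\ref{compatible relations teor intro} upgrades such compatibility to a topological isomorphism of Ellis groups (in the $\tau$-topology) provided that both models are $\aleph_0$-saturated and strongly $\aleph_0$-homogeneous. A short zigzag through a common extension turns these two inputs into the desired absoluteness statement.

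Concretely, I would fix two candidate monster models $\C_1$ and $\C_2$, each $\aleph_0$-saturated and strongly $\aleph_0$-homogeneous. Since $T$ is complete, its models enjoy the joint embedding property, so a standard elementary chain construction produces a further elementary extension $\C^*$ into which both $\C_1$ and $\C_2$ elementarily embed and which is itself $\aleph_0$-saturated and strongly $\aleph_0$-homogeneous. Identifying $\C_1$ and $\C_2$ with their images, both become elementary substructures of $\C^*$.

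Next, I would apply the Proposition twice, once to the pair $\C_1 \prec \C^*$ and once to $\C_2 \prec \C^*$, obtaining that the relation $\tilde{E}'^{\textrm{st}}_\emptyset$ computed in $\C^*$ is compatible with each of the relations $\tilde{E}^{\textrm{st}}_\emptyset$ computed in $\C_1$ and in $\C_2$. Feeding each compatibility into Theorem~\ref{compatible relations teor intro} yields a topological isomorphism between the Ellis group of $S_X(\C^*)/\tilde{E}'^{\textrm{st}}_\emptyset$ and the Ellis group of $S_X(\C_i)/\tilde{E}^{\textrm{st}}_\emptyset$ for each $i \in \{1,2\}$. Composing one isomorphism with the inverse of the other produces the required topological isomorphism between the Ellis groups attached to $\C_1$ and $\C_2$.

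Since both ingredients are already proved elsewhere in the paper, there is no serious obstacle here; the Corollary is purely a formal deduction. The only mild point to verify is the standard model-theoretic fact that a common elementary extension of the required modest saturation and homogeneity exists, which follows from joint embedding together with the usual existence of $\aleph_0$-saturated, strongly $\aleph_0$-homogeneous extensions. The zigzag through $\C^*$ is the only natural move because compatibility is phrased between a model and its elementary extension, not between two arbitrary models.
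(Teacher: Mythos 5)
Your proposal is correct and is essentially the paper's own argument: the Corollary is deduced immediately from the compatibility of $\tilde{E}'^{\textrm{st}}_\emptyset$ with $\tilde{E}^{\textrm{st}}_\emptyset$ (which needs no saturation) together with Theorem \ref{compatible relations teor intro}. The only difference is that you spell out the standard zigzag through a common $\aleph_0$-saturated, strongly $\aleph_0$-homogeneous elementary extension, which the paper leaves implicit since its theorem is stated for pairs $\C \prec \C'$.
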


In the above discussion, we are talking about the finest closed, $\aut(\C)$-invariant equivalence relation $\fwap$ [resp. $\fta$] on the $\aut(\C)$-flow $S_X(\C)$ with  WAP [resp. tame] quotient, and about the finest $\emptyset$-type-definable equivalence relation $E^{\textrm{st}}_\emptyset$ [resp. $E^{\textrm{NIP}}_\emptyset$]  on $X$ with stable [resp. NIP] quotient. Why do they exist?  The existence of $\fwap$ [resp. $\fta$] is a folklore knowledge in topological dynamics, which we explain in Section \ref{section: topological dynamics}. The existence of $E^{\textrm{st}}_\emptyset$ [resp. $E^{\textrm{NIP}}_\emptyset$] follows from fact that stable [resp. NIP] quotients are closed under taking products. In the stable case, this property is well-known, but we include a proof for the reader's convenience. Regarding NIP, the observation seems to be new and more involved. Both proofs are included in Appendix B.

\section{Preliminaries}\label{section: preliminaries}

\subsection{Model theory}\label{subsection: model theory}

$T$ will usually denote a first order complete theory in a language $\mL$. A model $M$ of $T$ is {\em $\kappa$-saturated} if every complete type $p \in S_n(A)$ over a subset $A$ of $M$ of cardinality less than $\kappa$ has a realization in $M$; it is said to be {\em strongly $\kappa$-homogeneous} if every partial elementary map between subsets of $M$ of cardinality less than $\kappa$ extends to an isomorphism of $M$. By  a {\em monster model} $\C$ of $T$ one usually means a $\kappa$-saturated and strongly $\kappa$-homogeneous model for a sufficiently large cardinal $\kappa$ (often a strong limit cardinal greater than $|T|$). But in this paper we will usually require much less about $\kappa$. Precise requirements will be given in the main results. In any case, the above $\kappa$ is called {\em the degree of saturation of $\C$}.

Let $x$ be a tuple of variables of length $\lambda$ (typically, $\lambda < \kappa$). By a {\em $\emptyset$-type-definable set} we mean a partial type $\pi(x)$ without parameters in variables $x$ or the actual subset $X = \pi(\C)$ of $\C^{\lambda}$. By $S_\pi(\C)$ or $S_X(\C)$ we denote the space of complete types over $\C$ concentrated on $X$, that is containing $\pi(x)$ as a set of formulas.

The group $\aut(\C)$ of automorphisms of $\C$ acts naturally on the left on $S_X(\C)$ by $\sigma(p):=\{\varphi(x,\sigma(a)): \varphi(x,a) \in p\}$. Then $(\aut(\C),S_X(\C))$ is a flow with $\aut(\C)$ equipped with the pointwise convergence topology.

Let $\C' \succ \C$ be a bigger monster model whose degree of saturation is greater than $|\C|$. Then the complete types over $\C$ can be realized in $\C'$. For $p \in S(\C)$ and $a \in \C'$, $a \models p$ or $p = \tp(a/\C)$ means that $a$ realizes $p$.

Let $E$ be a $\emptyset$-type-definable equivalence relation on a $\emptyset$-type-definable subset $X=\pi(\C)$ of $\C^\lambda$, with $\lambda< \kappa$ (where $\kappa$ is the degree of saturation of $\C$). The equivalence classes of $E$ are called {\em hyperimaginary elements}, while all the sets of the form $X/E$ are called {\em hyperdefinable sets} (over $\emptyset$). 

Recall that the complete types over $\C$ of elements of $X'/E'$ (where $X'=\pi(\C')$ and $E':=E(\C')$)
can be defined as the $\aut(\C'/\C)$-orbits on $X'/E'$, or the preimages of these orbits
under the quotient map, or the partial types defining these preimages, or the classes
of the closed, $\aut(\C)$-invariant equivalence relation $\Tilde{E}$ on $S_X(\C)$ given by 
$$ p\Tilde{E}q\iff (\exists a\models p, b\models q) \left( aE'b \right).$$
By $\tp(\bslant{a/E}{\C})$ we denote the type of $a/E$ over $\C$.
In particular, 
$$ p\Tilde{E}q\iff (\exists a\models p, b\models q) \left( \tp(\bslant{a/E'}{\C})=\tp(\bslant{b/E'}{\C})  \right),$$
and
$$ p\Tilde{E}q\iff (\forall a\models p, b\models q) \left( \tp(\bslant{a/E'}{\C})=\tp(\bslant{b/E'}{\C})  \right).$$
The collection of all such types is denoted by $S_{X/E}(\C)$ and is equipped with the quotient topology on $S_X(\C)/\tilde{E}$. Then $(\aut(\C),S_{X/E}(\C))$ becomes an $\aut(\C)$-flow.

Complete types of hyperimaginaries over any sets of parameters (in place of the whole $\C$) are defined analogously. In particular, $\tilde{E}$ is defined for any (not necessarily sufficiently saturated) model $\C$ as long as $E$ is $\emptyset$-type-definable in the sense that there exists a partial type $\rho(x,y)$ over $\emptyset$ which defines an equivalence relation $E(M)$ on $X(M)$ in a sufficiently saturated (equivalently, in every) model $M$ of $T$.

Recall now what it means that $X/E$ is stable and NIP (see \cite{MR3796277, 10.1215/00294527-2022-0023}).

\begin{defin}\label{definition: stability}
	We say that $X/E$ is \emph{stable} if for every indiscernible sequence $(a_i,b_i)_{i<\omega}$ with $a_i\in X/E$ for all (equivalently, some) $i<\omega$, we have  
	$$\tp(a_i,b_j) = \tp(a_j,b_i)$$ for all (some) $i\neq j < \omega$.
\end{defin}

\begin{defin}\label{definition: NIP}
	We say that $X/E$ has {\em NIP} if there	do not exist an indiscernible sequence $(b_i)_{i<\omega}$ and $d\in X/E$ such that
	$((d, b_{2i}, b_{2i+1}))_{i<\omega}$ is indiscernible and $\tp(d, b_0) \neq \tp(d, b_1)$.
\end{defin}

Note that the $b_i$'s in the above definitions can be anywhere, not necessarily in $X/E$.

The properties of stability and NIP for hyperdefinable sets are both preserved under (possibly infinite) Cartesian products and taking type-definable subsets. Closedness of stability under taking products was remarked (without a proof) in \cite{MR3796277}; closedness of NIP under taking products seems to be a new result and the proof is more involved. Both proofs are included in Appendix \ref{appendixB}. As a corollary of these results, in Corollary \ref{finest eq exist for fixed param} we obtain the existence of the finest $\emptyset$-definable equivalence relation on $X$ with stable [resp. NIP] quotient. Below is a more precise information that partial types defining these finest equivalence relations do not depend on the choice of $\C$.


\begin{prop}\label{remark: pist and piNIP}
Given a partial type $\pi(x)$ over $\emptyset$ (with $|x|=\lambda$), there exists a partial type $\pi^{\textrm{st}}(x, y)$ [resp. $\pi^{\textrm{NIP}}(x, y)$] over the empty set which for every $(\aleph_0 + \lambda)$-saturated
model $\C$ defines the finest $\emptyset$-type-definable equivalence relation
on $X:=\pi(\C)$ with stable [resp. NIP] quotient. This relation will be denoted by $E^{\textrm{st}}_\emptyset$ [resp.  $E^{\textrm{NIP}}_\emptyset$].  
\end{prop}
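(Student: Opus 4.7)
The plan is to fix once and for all a very saturated and strongly homogeneous monster $\C^*$ (say $\kappa^*$-saturated for $\kappa^*$ much larger than $(\aleph_0+\lambda)^+$), apply Corollary \ref{finest eq exist for fixed param} inside $\C^*$ to obtain the finest $\emptyset$-type-definable equivalence relation $E^*$ on $\pi(\C^*)$ with stable quotient, and fix any partial type $\pi^{\textrm{st}}(x,y)$ over $\emptyset$ defining $E^*$. I then claim that this single $\pi^{\textrm{st}}$ plays the desired role in every $(\aleph_0+\lambda)$-saturated $\C$; the NIP case is handled identically, starting from the corresponding finest NIP-quotient relation in $\C^*$ and producing the promised $\pi^{\textrm{NIP}}$.

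Two transfers are easy. First, that $\pi^{\textrm{st}}$ defines an equivalence relation $F$ on $\pi(\C)$ follows because reflexivity, symmetry and transitivity of $\pi^{\textrm{st}}$ on $\pi$ are type-implications over $\emptyset$ that hold in $\C^*$; by compactness each reduces to a first-order consequence of $T$, which must therefore also hold in $\C$. Second, $\pi(\C)/F$ is stable, because any indiscernible sequence in $\C$ violating Definition \ref{definition: stability} for $\pi(\C)/F$ remains indiscernible in $\C^*$ (as $\C\preceq\C^*$) and witnesses non-stability of $\pi(\C^*)/E^*$, contradicting the choice of $E^*$. No saturation of $\C$ is used in either of these two steps.

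The substance lies in maximality. Let $E'$ be any $\emptyset$-type-definable equivalence relation on $\pi(\C)$ with stable quotient, defined by a partial type $\rho'(x,y)$. I want $F\subseteq E'$, which by compactness is equivalent to $\pi^{\textrm{st}}\vdash\rho'$ modulo $T$. By the same transfer as above, $\rho'$ defines an equivalence relation $E'^*$ on $\pi(\C^*)$; and once I know that $\pi(\C^*)/E'^*$ is stable, maximality of $E^*$ in $\C^*$ forces $E^*\subseteq E'^*$, i.e.\ $\pi^{\textrm{st}}\vdash\rho'$, and this implication transfers back to $F\subseteq E'$ in $\C$.

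The main obstacle is therefore the opposite direction of the stability transfer: deducing stability of $\pi(\C^*)/E'^*$ from stability of $\pi(\C)/E'$. Suppose the former fails, witnessed by an indiscernible sequence $(a_i,b_i)_{i<\omega}$ in $\C^*$; by reducing to a single formula witnessing $\tp(a_0/E'^*,b_1)\neq\tp(a_1/E'^*,b_0)$ the $b_i$ may be taken finite, so the Ehrenfeucht--Mostowski type of the sequence is a partial type over $\emptyset$ in $\aleph_0\cdot\lambda+\aleph_0=\aleph_0+\lambda$ variables, consistent with $T$. By $(\aleph_0+\lambda)$-saturation of $\C$ this EM-type is realized in $\C$, producing an indiscernible sequence there that violates stability of $\pi(\C)/E'$ --- a contradiction. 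The delicate verification is that types of hyperimaginaries $\tp(a/E',b)$ appearing in Definition \ref{definition: stability} are controlled purely by $T$ and $\rho'$, and are therefore preserved between $\C$ and $\C^*$, so the realization produced in $\C$ really is a bad witness for the definition applied to $\pi(\C)/E'$. The NIP case is identical, with Definition \ref{definition: NIP} in place of Definition \ref{definition: stability}.
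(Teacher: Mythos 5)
Your proposal follows essentially the same route as the paper: obtain the finest relation in one sufficiently saturated model via Corollary \ref{finest eq exist for fixed param}, take a partial type $\pi^{\textrm{st}}$ [resp. $\pi^{\textrm{NIP}}$] defining it, and transfer to any other $(\aleph_0+\lambda)$-saturated model --- a step the paper dismisses as ``a routine exercise'' and which you correctly flesh out (transfer of the equivalence-relation axioms and of stability/NIP by compactness, preservation of hyperimaginary types, and realization of EM-types of witnessing sequences using $(\aleph_0+\lambda)$-saturation). The only loose end is your tacit assumption that every $(\aleph_0+\lambda)$-saturated $\C$ sits elementarily inside the fixed $\C^*$; for $\C$ of large cardinality one should pass to a common elementary extension of $\C$ and $\C^*$, which your two-way transfer arguments already cover.
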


\begin{proof}
    Let us fix a model $\C$ and $X\subseteq \C^\lambda$ as above. Corollary \ref{finest eq exist for fixed param} implies that a finest $\emptyset$-type-definable equivalence relation on $X$ with stable [resp. NIP] quotient exists. Let $\pi^{\textrm{st}}$ [resp. $\pi^{\textrm{NIP}}$] be the partial type defining this relation. Put $X' :=\pi(\C')\subseteq \C'^{\lambda}$ where $\C'$ is a different $(\aleph_0 +\lambda)$-saturated model. It is a routine exercise to check that $\pi^{\textrm{st}}(\C',\C')$ [resp. $\pi^{\textrm{NIP}}(\C',\C')$] defines the finest $\emptyset$-type-definable equivalence relation on $X'$ with stable [resp. NIP] quotient.
\end{proof}

Even if $\C$ is not sufficiently saturated,
then by $E^{\textrm{st}}_\emptyset$ we could mean  $\pi^{\textrm{st}}(X(\C),X(\C))$. However, we do not need to talk about
it, as we will only work with the equivalence relation $\tilde{E}^{\textrm{st}}_\emptyset$ on $S_X(\C)$ which is defined as we did above. Namely,
$$p \tilde{E}^{\textrm{st}}_\emptyset q \iff (\exists a \models p, b \models q)( \pi^{\textrm{st}}(a,b)),$$
where $a,b$ are taken in a big monster model.
Similarly for NIP, we are interested in the relation $\tilde{E}^{\textrm{NIP}}_\emptyset$ on $S_X(\C)$ defined by
$$p \tilde{E}^{\textrm{NIP}}_\emptyset q \iff (\exists a \models p, b \models q)( \pi^{\textrm{NIP}}(a,b)).$$
By  $E'^{\textrm{st}}_\emptyset$, $E'^{\textrm{NIP}}_\emptyset$, $\tilde{E}'^{\textrm{st}}_\emptyset$, and $\tilde{E}'^{\textrm{NIP}}_\emptyset$ we denote the relations defined as above but working with $\C'$ in place of $\C$ (where $\C'$ is another model of $T$).

\subsection{Topological dynamics}\label{section: topological dynamics}

We recall various definitions and state some facts concerning topological dynamics. For a more in depth study of the topic see e.g. \cite{auslander1988minimal} and \cite{Glasner1976}. A concise presentation (with proofs) of basic Ellis theory can be found in Appendix A of \cite{rzepecki2018bounded}.

In this paper, compact spaces are Hausdorff by definition.

\begin{defin}\label{defin: ellis semigroup}
\begin{itemize}
    \item A \emph{$G$-flow} is a pair $(G,X)$ consisting of a topological group $G$ that acts continuously on a compact space $X$.
    \item If $(G,X)$ is a $G$-flow, then its \emph{Ellis semigroup} $E(X)$ is the pointwise closure in $X^X$ of the set of functions $\pi_g: x \mapsto g\cdot x$ for $g\in G$.
\end{itemize}
\end{defin}

\begin{fact}
    The Ellis semigroup of a $G$-flow $(G,X)$ is a compact left topological semigroup with composition as its semigroup operation. Moreover, $E(X)$ is itself a $G$-flow equipped with the action $g\eta:=\pi_g \circ \eta$ for $g\in G$ and $\eta \in E(X)$.
\end{fact}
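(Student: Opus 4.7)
The plan is to verify in order: compactness, closure under composition, the left topological property, and continuity of the induced action. Compactness is immediate: $X^X$ with the product topology is compact by Tychonoff, and $E(X)$ is by definition the closure of a subset of $X^X$, hence closed.

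The heart of the argument is closure under composition, where one must be careful because arbitrary elements of $E(X)$ need not be continuous maps $X\to X$. The key asymmetry is that right multiplication is always continuous: for any $\zeta\in X^X$, the map $R_\zeta:\eta\mapsto \eta\circ\zeta$ pulls back a subbasic open set $\{f:f(x)\in U\}$ to $\{\eta:\eta(\zeta(x))\in U\}$, which is itself subbasic open in the product topology. Left multiplication $L_\eta:\zeta\mapsto\eta\circ\zeta$, however, is continuous only when $\eta$ is continuous. This forces a two-step bootstrap. First fix $g\in G$; since $\pi_g$ is continuous, $L_{\pi_g}$ is continuous on $X^X$, and it sends the generating set $\{\pi_h:h\in G\}$ into itself via $\pi_g\circ\pi_h=\pi_{gh}$, so closedness of $E(X)$ gives $\pi_g\circ\eta\in E(X)$ for every $\eta\in E(X)$. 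Next fix $\zeta\in E(X)$: by the previous step $R_\zeta(\pi_g)\in E(X)$ for every $g\in G$, and continuity of $R_\zeta$ together with closedness of $E(X)$ yield $R_\zeta(E(X))\subseteq E(X)$. Associativity is automatic, and the fact that each $R_\zeta$ is continuous is precisely the left topological property.

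For the flow structure on $E(X)$, well-definedness of the action $g\cdot\eta=\pi_g\circ\eta$ into $E(X)$ is the first step of the bootstrap above. Joint continuity of $G\times E(X)\to E(X)$ at a point $(g_0,\eta_0)$ reduces to checking subbasic open neighborhoods of $\pi_{g_0}\circ\eta_0$: given $x\in X$ and open $U\ni \pi_{g_0}(\eta_0(x))$, pick, by joint continuity of the $G$-action on $X$ at the point $(g_0,\eta_0(x))$, open sets $N\ni g_0$ in $G$ and $W\ni\eta_0(x)$ in $X$ with $g\cdot w\in U$ on $N\times W$; then $\pi_g(\eta(x))\in U$ whenever $g\in N$ and $\eta(x)\in W$, which produces the required open neighborhood of $(g_0,\eta_0)$ in the product. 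The identity and group compatibility axioms follow at once from $\pi_e=\mathrm{id}$ and $\pi_{gh}=\pi_g\circ\pi_h$ together with associativity of composition.

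The main obstacle is the two-step argument for closure under composition: one must avoid assuming any continuity of arbitrary elements of $E(X)$, resolving this by first handling the continuous generators $\pi_g$ through (in that case continuous) left multiplication, and then bootstrapping through the always-continuous right multiplication. Everything else is routine.
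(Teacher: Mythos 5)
Your proof is correct. The paper states this as a background Fact without proof (deferring to the standard literature, e.g.\ Ellis and the appendix of Rzepecki's thesis), and your argument is exactly the standard one: Tychonoff plus closedness for compactness, the two-step bootstrap (continuous left translation by the generators $\pi_g$, then always-continuous right translation $R_\zeta$) for closure under composition without ever assuming continuity of arbitrary elements of $E(X)$, continuity of $R_\zeta$ for the left topological property in the paper's convention (continuity of $\eta\mapsto\eta\circ\zeta$ in the left factor), and the subbasic-neighborhood check for joint continuity of the $G$-action. The only omission is the trivial remark that $E(X)$ is Hausdorff as a closed subspace of the Hausdorff space $X^X$, which the paper's convention on compact spaces requires; everything else is complete.
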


Recall that a left ideal $I$ of a semigroup $S$, written as $I\unlhd S$, is a subset of $S$ such that $SI\subseteq I$.

The following is due to Ellis \cite{Ell69}. For a proof see also \cite[Fact A.8]{rzepecki2018bounded}.

\begin{fact}\label{Ellis theorem}
Let $(G,X)$ be a flow.
 Minimal left ideals of $E(X)$ exist and coincide with the minimal subflows of $(G,E(X))$. If $\mathcal{M}\unlhd E(X)$ is a minimal left ideal, then:
\begin{itemize}
    \item The ideal $\mathcal{M}$ is closed and for every $a\in \mathcal{M}$ we have $\mathcal{M}=E(X)a$.
    \item The set of idempotents of $\mathcal{M}$, denoted by $\mathcal{J}(\mathcal{M})$, is nonempty. Moreover, $\mathcal{M}=\bigsqcup_{u\in\mathcal{J}(\mathcal{M})} u\mathcal{M}$.
    \item For every $u\in\mathcal{J}(\mathcal{M})$, $u\mathcal{M}$ is a group with the neutral element $u$. Moreover, the isomorphism type of this group does not depend on the choice of $u$ and $\mathcal{M}$. We will call it the $\emph{Ellis group}$ of $X$. (This is the terminology used by model theorists; in the topological dynamics literature ``Ellis group'' usually denotes a related but different object.)
\item Let $u \in\mathcal{J}(\mathcal{M})$. For any minimal left ideal $\mathcal{N}$ there exists an idempotent $v \in \mathcal{N}$ such that $vu=u$ and $uv=v$.
    \item For every $u\in\mathcal{J}(\mathcal{M})$ and $s\in \mathcal{M}$, $su=s$.
\end{itemize} 
\end{fact}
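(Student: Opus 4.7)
The plan is to argue abstractly inside compact right-topological semigroups — semigroups $S$ with compact Hausdorff topology in which every right-multiplication map $\rho_a \colon s \mapsto sa$ is continuous. The Ellis semigroup falls into this setting because each evaluation functional $s \mapsto s(x)$ is continuous on $X^X$ with the product topology, and $\rho_a(s)(x) = s(a(x))$. All claims except the identification of minimal left ideals with minimal subflows are then internal facts about $S$.

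\textbf{Minimal left ideals and their coincidence with minimal subflows.} For any $a \in S$, $Sa = \rho_a(S)$ is a continuous image of a compact set, hence compact and closed, and it is visibly a left ideal. Zorn's lemma applied to the nonempty family of closed left ideals (intersections of decreasing chains stay nonempty by compactness) yields a minimal one $\mathcal{M}$; and for $a \in \mathcal{M}$, the closed left ideal $Sa \subseteq \mathcal{M}$ must equal $\mathcal{M}$, so every minimal left ideal is automatically closed and of the form $Sa$. To match minimal left ideals with minimal $G$-subflows of $E(X)$, note that $G \hookrightarrow E(X)$ as $\{\pi_g\}$, so closed left $E(X)$-ideals are closed and $G$-invariant; conversely, for closed $G$-invariant $Y \subseteq E(X)$ and $y \in Y$, continuity of $\rho_y$ and density of $G$ in $E(X)$ give $E(X)\,y = \overline{G}\,y = \overline{Gy} \subseteq Y$, so $Y$ is a left ideal.

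\textbf{Idempotents, group structure, and the decomposition.} Inside $\mathcal{M}$, Zorn on the family of nonempty closed subsemigroups (intersections remain nonempty closed subsemigroups by compactness) produces a minimal element $T$. Fix $e \in T$: then $Te$ is a closed (since $\rho_e$ is continuous) subsemigroup of $T$, so $Te = T$, and then $\{x \in T : xe = e\}$ is a nonempty closed subsemigroup, again equal to $T$ by minimality; in particular $ee = e$. Applied to the closed subsemigroup $\{x \in \mathcal{M} : xa = a\}$ — nonempty because $Sa = \mathcal{M} \ni a$ gives some $s$ with $sa = a$ — this shows every $a \in \mathcal{M}$ lies in $u\mathcal{M}$ for some $u \in \mathcal{J}(\mathcal{M})$. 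Given such a $u$ and $a = ub \in u\mathcal{M}$, minimality furnishes $c \in \mathcal{M}$ with $ca = u$, and then $uc \in u\mathcal{M}$ is a left inverse of $a$ in $u\mathcal{M}$; a symmetric construction yields right inverses, so $u\mathcal{M}$ is a group with unit $u$. Disjointness of $u\mathcal{M}$ and $v\mathcal{M}$ for distinct idempotents follows by inverting a putative common element inside one of the groups. Finally, $su = s$ for every $s \in \mathcal{M}$ and $u \in \mathcal{J}(\mathcal{M})$ is a one-liner: $\mathcal{M} = Su$ gives $s = s'u$, hence $su = s'u^2 = s'u = s$.

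\textbf{Isomorphism invariance and matched idempotents across minimal ideals.} For $u, v \in \mathcal{J}(\mathcal{M})$ inside the same minimal ideal, $\rho_v$ and $\rho_u$ restrict to mutually inverse group isomorphisms between $u\mathcal{M}$ and $v\mathcal{M}$. For a different minimal left ideal $\mathcal{N}$, the image $\mathcal{N} u = \rho_u(\mathcal{N})$ is closed and one checks it is again a minimal left ideal (using $E(X)\mathcal{N}u = \mathcal{N}u$ and the formula $E(X)a = \mathcal{N}u$ for any $a \in \mathcal{N}u$); the idempotent lemma, applied to a carefully chosen closed subsemigroup of $\mathcal{N}$ built using $u$ and the group structure of $u\mathcal{N}$, then yields $v \in \mathcal{J}(\mathcal{N})$ with $vu = u$ and $uv = v$, after which $\rho_v \colon u\mathcal{M} \to v\mathcal{N}$ is the desired group isomorphism. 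The main technical subtlety, I expect, is precisely this matched-idempotent step between distinct minimal left ideals: the asymmetry between left and right multiplication in a right-topological semigroup obstructs the obvious attempts, and one has to use the group structure inside $u\mathcal{N}$ (or $\mathcal{N}u$) to land back in $\mathcal{N}$. Everything else reduces to direct manipulation once the Ellis–Numakura idempotent lemma and the identity $\mathcal{M} = Sa$ are in hand.
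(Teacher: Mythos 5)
The paper does not give its own proof of this fact: it is quoted from Ellis and \cite[Fact A.8]{rzepecki2018bounded}. Your proposal reconstructs the standard Ellis--Numakura argument, and the overall architecture (closedness of $Sa$, Zorn on closed left ideals, the idempotent lemma via a minimal closed subsemigroup, $\mathcal{M}=\bigsqcup u\mathcal{M}$, matched idempotents across ideals) is the right one. (A terminological aside: the paper calls $E(X)$ \emph{left} topological while you call it right topological; your definition is internally consistent with what you verify, so this is only a clash of conventions.) There is, however, one step that is false as written. For $u,v\in\mathcal{J}(\mathcal{M})$ in the \emph{same} minimal left ideal you claim that $\rho_v$ and $\rho_u$ restrict to mutually inverse isomorphisms between $u\mathcal{M}$ and $v\mathcal{M}$. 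By the identity $su=s$ (valid for all $s\in\mathcal{M}$ and $u\in\mathcal{J}(\mathcal{M})$), which you yourself prove two lines later, right multiplication by any idempotent of $\mathcal{M}$ is the \emph{identity} map on $\mathcal{M}$; hence $\rho_v$ restricted to $u\mathcal{M}$ has image $u\mathcal{M}$, not $v\mathcal{M}$. The correct maps in the same-ideal case are the \emph{left} multiplications $x\mapsto vx$ and $y\mapsto uy$: these are homomorphisms because $(vx)(vy)=v(xv)y=vxy$ using $xv=x$, and they are mutually inverse because $u(vx)=(uv)x=ux=x$ using $uv=u$. Right multiplication is the right tool only in the cross-ideal case, where your $\rho_v\colon u\mathcal{M}\to v\mathcal{N}$ for matched idempotents $uv=v$, $vu=u$ is indeed correct.

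Two smaller loose ends. First, the nonemptiness of $\{x\in\mathcal{M}:xa=a\}$ does not follow from ``$Sa=\mathcal{M}\ni a$ gives some $s$ with $sa=a$'', since that $s$ need not lie in $\mathcal{M}$; the fix is that $\mathcal{M}a$ is a left ideal contained in $\mathcal{M}$, hence equal to $\mathcal{M}$, so $a\in\mathcal{M}a$. Second, the matched-idempotent step that you explicitly leave vague is standardly completed as follows: $T:=\{x\in\mathcal{N}:xu=u\}$ is a closed subsemigroup of $\mathcal{N}$, nonempty because $\mathcal{N}u$ is a left ideal contained in $\mathcal{M}$ and hence equals $\mathcal{M}\ni u$; the idempotent lemma gives $v\in T$ with $v^2=v$ and $vu=u$; then $(uv)(uv)=u(vu)v=uv$ and $v(uv)=(vu)v=uv$, so $uv$ is an idempotent lying in the group $v\mathcal{N}$, which forces $uv=v$. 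Note that the group one uses here is $v\mathcal{N}$, not ``$u\mathcal{N}$ or $\mathcal{N}u$'' as you suggest ($u\notin\mathcal{N}$, and $\mathcal{N}u=\mathcal{M}$). With these repairs the proposal matches the standard proof that the paper cites.
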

  
Let $(G,X)$ be a flow, $\mathcal{M} \unlhd E(X)$ a minimal lef ideal, and $u \in  \mathcal{J}(\mathcal{M})$. 
The Ellis group $u\mathcal{M}$ has the inherited topology from $E(X)$. However, there exists another important topology on $u\mathcal{M}$, called the \emph{$\tau$-topology}. We recall it now (for the proofs see \cite[Appendix A]{rzepecki2018bounded}). First, for any $a\in E(X)   $ and $B\subseteq E(X)$ let $a\circ B$ be the set of all limits of the nets $(g_ib_i)_{i\in \I}$ such that $g_i\in G$, $b_i\in B$ and $\lim g_i=a$ (where $g_i$ is identified with $\pi_{g_i}$). The closure operator $\cl_\tau$ on subsets of $u\mathcal{M}$ is given by $\cl_\tau(B):=u\mathcal{M}\cap (u\circ B) = u(u\circ B)$ (for $B\subseteq u\mathcal{M}$). The {\em $\tau$-topology} is the topology induced on $u\mathcal{M}$ by the closure operator $\cl_\tau$. The Ellis group $u\mathcal{M}$ equipped with with the $\tau$-topology is a quasi-compact $T_1$ semitopological group (i.e. the group operation is separately continuous). In fact, in the third item of Fact \ref{Ellis theorem}, we have that the isomorphism type of $u\mathcal{M}$ as a group equipped with the $\tau$-topology does not depend on the choice of $u$ and $\mathcal{M}$ (for a proof see e.g. \cite[Fact A.37]{rzepecki2018bounded}).

For a proof of the following fact see \cite[Proposition 5.41]{rzepecki2018bounded}.

\begin{fact}\label{epimorphism semigroup to group}
    Let $(G,X)$ and $(G,Y)$ be two $G$-flows, and let $\Phi: X\to Y$ be a $G$-flow epimorphism. Then $\Phi_*:E(X)\to E(Y)$ given by $$\Phi_*(\eta)(\Phi(x)):=\Phi(\eta(x))$$ is a continuous epimorphism.  
    
    If $\mathcal{M}$ is a minimal left ideal of $E(X)$ and $u\in \mathcal{J}(\mathcal{M})$, then:
    \begin{itemize}
        \item $\mathcal{M}':=\Phi_*[\mathcal{M}]$ is a minimal left ideal of $E(Y)$ and $u'=\Phi_*(u)\in \mathcal{J}(\mathcal{M}')$.
        \item $\Phi_*\!\!\upharpoonright_{u\mathcal{M}}: u\mathcal{M}\to u'\mathcal{M}'$ is a group epimorphism and a quotient map in the $\tau$-topologies.
    \end{itemize}
    Moreover, if $\Phi: X\to Y$ is a $G$-flow isomorphism, then $\Phi_*\!\!\upharpoonright_{u\mathcal{M}}: u\mathcal{M}\to \Phi(u)\Phi[\mathcal{M}]$ is a group isomorphism and a homeomorphism in the $\tau$-topologies.
\end{fact}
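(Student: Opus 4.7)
The plan is to verify each clause in order, reducing everything to the fact that $\Phi$ is a continuous, surjective $G$-equivariant map.

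First I would check that $\Phi_*$ is well-defined. Let $R=\{(x_1,x_2)\in X\times X: \Phi(x_1)=\Phi(x_2)\}$; this is a closed, $G$-invariant equivalence relation. Every $\pi_g$ preserves $R$, and taking pointwise limits preserves $R$ by closedness, so each $\eta\in E(X)$ sends $R$-related points to $R$-related points. Hence the prescription $\Phi_*(\eta)(\Phi(x)):=\Phi(\eta(x))$ is unambiguous, and $\Phi_*(\eta)$ is a function on $Y$. To see $\Phi_*(\eta)\in E(Y)$, pick a net $g_i\in G$ with $\pi^X_{g_i}\to\eta$ in $E(X)$; since $\Phi$ is $G$-equivariant, $\Phi_*(\pi_g^X)=\pi_g^Y$, and pointwise continuity of $\Phi$ gives $\pi_{g_i}^Y\to \Phi_*(\eta)$ in $E(Y)$. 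The same computation shows $\Phi_*$ is continuous on all of $E(X)$, and it is a semigroup homomorphism directly from the definition. Surjectivity is obtained by lifting any approximating net $(g_i)$ for a given $\mu\in E(Y)$ to $E(X)$, extracting a convergent subnet by compactness of $E(X)$ with limit $\eta$, and observing $\Phi_*(\eta)=\mu$.

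Next I would handle the ideal structure. Because $\mathcal{M}$ is a minimal left ideal, $\mathcal{M}=E(X)a$ for every $a\in\mathcal{M}$ (Fact \ref{Ellis theorem}), so
\[
\Phi_*[\mathcal{M}]=\Phi_*[E(X)a]=\Phi_*[E(X)]\Phi_*(a)=E(Y)\Phi_*(a),
\]
using that $\Phi_*$ is a surjective semigroup homomorphism. The same identity applied to any $b'=\Phi_*(b)\in\Phi_*[\mathcal{M}]$ yields $E(Y)b'=\Phi_*[\mathcal{M}]$, so $\mathcal{M}'=\Phi_*[\mathcal{M}]$ is a minimal left ideal. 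Idempotency of $u'=\Phi_*(u)$ is immediate, and the restriction $\Phi_*\!\!\upharpoonright_{u\mathcal{M}}$ lands in $u'\mathcal{M}'$ because $s=us$ for $s\in u\mathcal{M}$ implies $\Phi_*(s)=u'\Phi_*(s)$; for surjectivity onto $u'\mathcal{M}'$, any $s'=u't$ with $t=\Phi_*(t_0)$, $t_0\in\mathcal{M}$, equals $\Phi_*(ut_0)$ with $ut_0\in u\mathcal{M}$. Since $u\mathcal{M}$ and $u'\mathcal{M}'$ are groups with identities $u,u'$ and $\Phi_*(u)=u'$, the restriction is a group epimorphism.

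The delicate step is the $\tau$-topology claim. Here I would work directly with the closure formula $\cl_\tau(B)=u(u\circ B)$. For $\tau$-continuity I would show $\Phi_*[u\circ B]\subseteq u'\circ \Phi_*[B]$: if $a=\lim g_ib_i$ with $g_i\to u$ and $b_i\in B$, then applying the continuous semigroup map $\Phi_*$ and using $\Phi_*(\pi_g^X)=\pi_g^Y$ gives $\Phi_*(a)=\lim g_i\Phi_*(b_i)$ with $g_i\to u'$ in $E(Y)$, so $\Phi_*[\cl_\tau(B)]\subseteq \cl_\tau(\Phi_*[B])$. For the quotient-map property I would argue conversely: given $c'\in u'\circ \Phi_*[B]$, write $c'=\lim g_i\Phi_*(b_i)$ with $g_i\to u'$ in $E(Y)$ and $b_i\in B$; by compactness of $E(X)$ pass to a subnet so that $g_i\to w\in E(X)$ (necessarily $\Phi_*(w)=u'$) and so that $g_ib_i\to c$ in $E(X)$. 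Then $c\in w\circ B$, and since $u'\mathcal{M}'=\Phi_*[u\mathcal{M}]=\Phi_*[w\mathcal{M}]$ (as $\mathcal{M}$ is minimal, the choice of idempotent is interchangeable via the last item of Fact \ref{Ellis theorem}), a short argument using $u$-$w$ absorption identities lets one replace $w$ by $u$ and show $c'=\Phi_*(uc)$ with $uc\in \cl_\tau(B)$. This yields $\cl_\tau(\Phi_*[B])\subseteq \Phi_*[\cl_\tau(B)]$, which gives equality; equality of $\tau$-closures on saturated sets is exactly the quotient-map condition. The ``moreover'' clause about isomorphisms is then immediate because $(\Phi^{-1})_*$ is the two-sided inverse of $\Phi_*$, so the restriction to $u\mathcal{M}$ is a group isomorphism and a $\tau$-homeomorphism. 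The swapping of idempotents in the $\tau$-closure step is the clause I expect to require the most care.
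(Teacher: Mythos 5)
The paper does not prove this statement at all --- it is quoted as a Fact with a citation to \cite[Proposition 5.41]{rzepecki2018bounded} --- so I am evaluating your argument on its own. Most of it is correct and standard: the well-definedness of $\Phi_*$ via the closed invariant relation $R$, its continuity, the semigroup-homomorphism property, surjectivity by lifting nets through the compactness of $E(X)$, the transfer of minimal left ideals and idempotents, the group epimorphism claim, the $\tau$-continuity direction $\Phi_*[\cl_\tau(B)]\subseteq \cl_\tau(\Phi_*[B])$, and the ``moreover'' clause. Note that the continuity direction works precisely because a net with $\pi^X_{g_i}\to u$ automatically satisfies $\pi^Y_{g_i}\to u'$; the converse implication is what fails in the other direction.

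The quotient-map step has a genuine gap, exactly where you flagged it. After passing to a subnet you obtain $\pi^X_{g_i}\to w$ with $\Phi_*(w)=u'$ and $c=\lim g_ib_i\in w\circ B$, but $w$ is merely \emph{some} element of $\Phi_*^{-1}(u')$: it need not lie in $\mathcal{M}$, need not be idempotent, and satisfies no absorption identities with $u$ (the identities $su=s$ from Fact \ref{Ellis theorem} require $s\in\mathcal{M}$, and ``$w\mathcal{M}$'' is not an Ellis group unless $w\in\mathcal{J}(\mathcal{M})$). What the standard $\circ$-calculus actually yields is $uc\in u\mathcal{M}\cap\bigl((uw)\circ B\bigr)$, and there is no general principle replacing $(uw)\circ B$ by $u\circ B$ here. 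Worse, the implication you need --- ``$c\in w\circ B$ with $c \in \mathcal{M}$ and $\Phi_*(w)=u'$ implies $uc\in\cl_\tau(B)$'' --- is false for arbitrary such $w$: take $Y$ a one-point flow, so $\Phi_*^{-1}(u')=E(X)$, and let $B=\{b\}\subseteq u\mathcal{M}$; then as $w$ ranges over $E(X)$ the elements $wb$ exhaust $\mathcal{M}$, so $u(E(X)\circ B)\cap u\mathcal{M}=u\mathcal{M}$, whereas $\cl_\tau(\{b\})=\{b\}$ since the $\tau$-topology is $T_1$. Hence any correct completion must exploit the hypothesis $\pi^Y_{g_i}\to u'$ in a more essential way than choosing an arbitrary cluster point $w$ of the lifted net, and the missing step is not a routine absorption computation; this part of the argument needs to be replaced, e.g.\ by the argument in \cite[Proposition 5.41]{rzepecki2018bounded}.
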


From the last item of Fact \ref{Ellis theorem}, we easily deduce the following:

\begin{remark}\label{restrictio to image is monomorphism}
    If $X$ is a $G$-flow, $\mathcal{M}$ a minimal left ideal in $E(X)$, and $u \in \mathcal{M}$ an idempotent, then the map $f \colon u\mathcal{M} \to \Sym(\Image(u))$ given by $f(\eta):=\eta\!\!\upharpoonright_{\Image(u)}$ is a group monomorphism.
\end{remark}

We now briefly discuss Ellis semigroups in a model-theoretic context. In particular, we recall the definition of \emph{content}, introduced in \cite[Definition 3.1]{Krupiski2017BoundednessAA}, which will be an important tool in this paper.

\begin{defin}\label{defin: content} Fix $A\subseteq B$.
\begin{itemize}
    \item For $p(x)\in S(B)$, the \emph{content} of $p$ over $A$ is the following set:
    $$ c_A(p):= \{ (\varphi(x,y),q(y))\in \mL(A)\times S_y(A) : y \text{ is finite and } \varphi(x,b)\in p(x) \text{ for some } b\models q  \} .$$
    \item Similarly, the {\em content} of a sequence $p_0(x),\dots,p_n(x)\in S(B)$ over $A$, $c_A( p_0,\dots,p_n )$, is defined as the set of all $ (\varphi_0(x,y),\dots ,\varphi_n(x,y),q(y))\in \mL(A)^n\times S_y(A)$ with $y$ finite such that for some $b\models q$ and for every $i\leq n$ we have $\varphi_i(x,b)\in p_i$.
\end{itemize}
    If $A=\emptyset$, we simply omit it.
\end{defin}

The fundamental connection between contents and the Ellis semigroup of the flow $(\aut(\C),S_\pi(\C))$ is the following. For a proof see \cite[Proposition 3.5]{Krupiski2017BoundednessAA}. In fact, in this result it is enough to assume that $\C$ is strongly $\aleph_0$-homogeneous.

\begin{fact}\label{content and ellis semigroup}
    Let $\pi(x)$ be a partial type over $\emptyset$, $S_\pi(\C)$ the set of complete types over $\C$ extending $\pi$, and $(p_0,\dots,p_n)$ and $(q_0,\dots,q_n)$ sequences from $S_{\pi}(\C)$. Then $c(q_0,\dots,q_n)\subseteq c(p_0,\dots, p_n)$ if and only if there exists $\eta\in E(S_\pi(\C))$ such that $\eta(p_i)=q_i$ for every $i\leq n$.
\end{fact}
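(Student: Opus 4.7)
The statement is a biconditional, so I would split the proof into two directions, of which the right-to-left implication is essentially a chase of definitions while the left-to-right one requires an argument by compactness and strong $\aleph_0$-homogeneity.

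For the implication ``there exists $\eta$ with $\eta(p_i)=q_i$ implies $c(q_0,\dots,q_n)\subseteq c(p_0,\dots,p_n)$'', I would unwind the definition of $E(S_\pi(\C))$ as a pointwise closure: write $\eta=\lim_j \pi_{\sigma_j}$ with $\sigma_j\in\aut(\C)$. Given $(\varphi_0,\dots,\varphi_n,r)\in c(q_0,\dots,q_n)$, fix a witness $b\models r$ (which can be taken in $\C$ by saturation) with $\varphi_i(x,b)\in q_i$ for every $i\leq n$. Convergence $\sigma_j(p_i)\to q_i$ in the Stone topology on $S_\pi(\C)$ means that eventually $\varphi_i(x,b)\in \sigma_j(p_i)$, and since we only have finitely many $i$'s, one can find a common index $j_0$ that works for all of them. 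Then $b':=\sigma_{j_0}^{-1}(b)$ is still a realisation of $r$ (automorphisms fix types over $\emptyset$), and $\varphi_i(x,b')\in p_i$ for every $i$, witnessing membership in $c(p_0,\dots,p_n)$.

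For the harder converse, the plan is to show directly that the tuple $(q_0,\dots,q_n)$ lies in the closure of $\{(\sigma(p_0),\dots,\sigma(p_n)):\sigma\in\aut(\C)\}$ inside $S_\pi(\C)^{n+1}$; once this is established, a standard compactness argument (the continuous image of the compact set $E(S_\pi(\C))$ under evaluation at $(p_0,\dots,p_n)$ is closed and contains the above set, so it also contains $(q_0,\dots,q_n)$) yields the required $\eta$. To prove the closure condition, I would take an arbitrary basic open neighbourhood of $(q_0,\dots,q_n)$ determined by formulas $\psi_i(x,c_i)\in q_i$ with $c_i\in\C$. Concatenate the $c_i$'s into one finite tuple $c$, rewrite the formulas as $\psi_i(x,y)$ in a common (finite) tuple of variables $y$, and set $r:=\tp(c/\emptyset)$. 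By construction $(\psi_0,\dots,\psi_n,r)\in c(q_0,\dots,q_n)$, and the hypothesis transports this to $c(p_0,\dots,p_n)$: there is $b\models r$ with $\psi_i(x,b)\in p_i$ for all $i$.

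The last step is where strong $\aleph_0$-homogeneity enters: since $b$ and $c$ are finite tuples with the same type over $\emptyset$, there is $\sigma\in\aut(\C)$ sending $b$ to $c$; then $\psi_i(x,c)=\psi_i(x,\sigma(b))\in\sigma(p_i)$ for every $i$, so $\sigma$ realises the prescribed neighbourhood. The main technical point I expect to have to handle carefully is keeping track of the (finite) parameters and matching the variable tuples correctly so that a single tuple $b$, and hence a single automorphism $\sigma$, can be chosen to work simultaneously for all $i\leq n$; everything else is bookkeeping.
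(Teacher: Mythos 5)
Your proof is correct. The paper states this result as a Fact without proof, citing \cite[Proposition 3.5]{Krupiski2017BoundednessAA} (and noting that strong $\aleph_0$-homogeneity suffices); your argument — the easy direction by unwinding pointwise convergence of a net of $\pi_{\sigma_j}$'s, and the converse by showing $(q_0,\dots,q_n)$ lies in the closure of the $\aut(\C)$-orbit of $(p_0,\dots,p_n)$ via padding parameters into one finite tuple and applying strong $\aleph_0$-homogeneity, then invoking compactness of the Ellis semigroup — is exactly the standard proof given in that reference.
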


\begin{defin}\label{defin: content order}
    Let $\pi(x)$ be a partial type over $\emptyset$ and $(p_0,\dots,p_n)$ and $(q_0,\dots,q_n)$ sequences from $S_{\pi}(\C)$. We write $(q_0,\dots,q_n)\leq^c (p_0,\dots, p_n)$ if $c(q_0,\dots,q_n)\subseteq c(p_0,\dots, p_n)$.
\end{defin}

For the rest of the section we fix a $G$-flow $(G,X)$. Let $C(X)$ denote the space of all continuous real-valued maps on $X$. Given $f\in C(X)$ and $g\in G$, we define $gf \in C(X)$ by $(gf)(x):=f(g^{-1} x)$. This is a left action of $G$ on $C(X)$.

We recall two important classes of flows: \emph{weakly almost periodic flows} and \emph{tame flows}. For a more in depth treatment of the topic we recommend \cite{55bad191-0781-3b5e-a214-06bd2e8fc797} for weakly almost periodic flows and  \cite{Glasner2018} for tame ones.

Recall that the \emph{weak topology} on $C(X)$ is defined as the coarsest topology such that for every bounded (equivalently, continuous with respect to the supremum norm) linear functional $\ell:C(X)\to \R$, the map $$\ell:C(X)\to \R$$
is continuous.

\begin{defin}
    We say that a function $f\in C(X)$ is \emph{weakly almost periodic} ({\em WAP}) if $(gf:g\in G)$ is relatively compact in the weak topology on $C(X)$. A flow $(G,X)$ is {\em WAP} if every $f\in C(X)$ is WAP.
\end{defin}

The following fact is due to Grothendieck \cite{f935e45f-f66d-33ab-9bf8-2d48a5e00aee} (see also \cite[Appendix D]{kerr2016ergodic}).

\begin{fact} \label{double limit}
    Let $X_0$ be any dense subset of $X$. Let $f\in C(X)$. The following are equivalent:
    \begin{itemize}
        \item $f$ is WAP.
        \item  $\{gf:g\in G\}$ is relatively compact in the topology of pointwise convergence on $C(X)$.
\item The pointwise closure of $\{gf:g\in G\}$ is contained in $C(X)$.
        \item For any sequences $(g_nf)_{n<\omega}\subseteq \{gf:g\in G\}$ and $(x_n)_{n<\omega}\subseteq X_0$ we have $$\lim_n\lim_m g_nf(x_m)=\lim_m\lim_n g_nf(x_m) $$ whenever both limits exits.
    \end{itemize}
\end{fact}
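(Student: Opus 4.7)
The plan is to establish the four-way equivalence by a short cycle that leans on the structure of $C(X)$ as a subspace of $\mathbb{R}^X$ together with Grothendieck's classical double-limit criterion (which is really the content of the implication $(4)\Rightarrow(1)$). First I would note that the orbit $\{gf:g\in G\}$ is norm-bounded by $\|f\|_\infty$, so its pointwise closure lies inside the compact product $[-\|f\|_\infty,\|f\|_\infty]^X\subseteq\mathbb{R}^X$ by Tychonoff. This observation is the backbone of the entire argument and is why ``relatively compact'' can be tested by extracting pointwise limit points.

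The equivalence $(2)\Leftrightarrow(3)$ is essentially bookkeeping: the pointwise topology on $C(X)$ is the subspace topology from $\mathbb{R}^X$, and the pointwise closure of $\{gf\}$ in $\mathbb{R}^X$ is automatically compact; thus relative compactness in $C(X)$ (pointwise) is equivalent to that full closure sitting inside $C(X)$. The implication $(1)\Rightarrow(2)$ is immediate because the weak topology on $C(X)$ is finer than the topology of pointwise convergence (evaluation functionals $\mathrm{ev}_x\in C(X)^*$).

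For $(2)\Rightarrow(1)$, I would invoke Grothendieck's theorem: for a compact Hausdorff $X$ and a norm-bounded subset $A\subseteq C(X)$, relative countable compactness in the pointwise topology implies relative weak compactness. The proof uses the Riesz representation $C(X)^*=M(X)$ together with an interchange of limits argument justified by bounded convergence for Radon measures; this is the genuinely nontrivial step and the place where I expect the main obstacle to lie. In practice I would simply quote this from \cite{f935e45f-f66d-33ab-9bf8-2d48a5e00aee} rather than reproduce the measure-theoretic manipulation.

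Finally, for $(3)\Leftrightarrow(4)$ I would argue via pointwise limit extraction. Suppose $(4)$ fails with sequences $(g_n f)$ and $(x_m)\subseteq X_0$ producing distinct iterated limits $L_1=\lim_n\lim_m g_n f(x_m)$ and $L_2=\lim_m\lim_n g_n f(x_m)$. Using compactness of $[-\|f\|_\infty,\|f\|_\infty]^X$ and of $X$, extract a subnet so that $g_n f$ converges pointwise to some $h\in\mathbb{R}^X$ and $x_m$ converges to some $x\in X$. Then $L_1$ and $L_2$ represent two different ways of computing $\lim_m h(x_m)$ versus $h(x)$, forcing $h$ to be discontinuous at $x$; hence $(3)$ fails. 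Conversely, if some $h$ in the pointwise closure is discontinuous at a point $x$, one chooses $x_m\to x$ in $X_0$ (using density) and, approximating $h$ by elements of the orbit, constructs sequences $(g_n)$ and $(x_m)$ whose iterated limits disagree. The only care needed here is that $X$ may be non-metrizable, so one should either argue with nets throughout or pass to a separable subalgebra; this is routine but where the ``dense subset $X_0$'' hypothesis is actually used.
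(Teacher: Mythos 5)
The paper does not actually prove this statement: it is recorded as a Fact attributed to Grothendieck, with references to \cite{f935e45f-f66d-33ab-9bf8-2d48a5e00aee} and to the appendix of \cite{kerr2016ergodic}, so there is no internal argument to compare yours against. Your outline follows the standard route and most of it is sound: the reduction of $(2)\Leftrightarrow(3)$ to compactness of $[-\lVert f\rVert_\infty,\lVert f\rVert_\infty]^X$, the implication $(1)\Rightarrow(2)$ via evaluation functionals, the delegation of $(2)\Rightarrow(1)$ to Grothendieck's pointwise-to-weak compactness theorem, and the direction $(3)\Rightarrow(4)$ by extracting a pointwise limit $h$ of a subnet of $(g_nf)$ and a limit $x$ of a subnet of $(x_m)$ and observing that $h(x)=L_1$ while $\lim_m h(x_m)=L_2$.

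The gap is in $(4)\Rightarrow(3)$, which is exactly where the dense-subset refinement carries its content. If $h$ is a discontinuous pointwise limit of the orbit, you cannot simply ``choose $x_m\to x$ in $X_0$'': the points witnessing the discontinuity of $h$ at $x$ need not lie in $X_0$, and since $h$ is not continuous you cannot move such a witness $y$ to a nearby point of $X_0$ and retain any control on the value of $h$ there. The real issue is not metrizability (sequences suffice once the construction is set up correctly); it is that the separation must be carried by the continuous functions $g_jf$ rather than by $h$ itself. The standard repair is an interleaved induction: having chosen $g_1,\dots,g_n$ and $x_1,\dots,x_{n-1}\in X_0$, pick $g_{n+1}$ with $g_{n+1}f$ close to $h$ at $x$, at $x_1,\dots,x_{n-1}$, and at a fixed witness $y_n$ of the discontinuity lying in a small neighbourhood of $x$; then use continuity of $g_1f,\dots,g_{n+1}f$ and density of $X_0$ to replace $y_n$ by some $x_n\in X_0$ with $g_jf(x_n)$ close to $g_jf(y_n)$ for all $j\le n+1$. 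A final diagonal extraction (using boundedness) makes both iterated limits exist, and they differ by a definite amount coming from the modulus of discontinuity. Without this interleaving, the equivalence of the $X_0$-version of the double-limit condition with the other three items is not established.
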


The next two facts will be useful in this paper:

\begin{fact}\label{wap closed unital}
    For any flow $(G,X)$, the WAP functions form a closed (with the supremum norm) unital subalgebra of $C(X)$. 
\end{fact}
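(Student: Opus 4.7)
The plan is to verify four closure properties in turn: that the constant function $\mathbf{1}$ is WAP, that the WAP functions form a linear subspace of $C(X)$, that they are closed under products, and that they are closed in the supremum norm. The main tool will be the Grothendieck double limit criterion (the fourth clause of Fact~\ref{double limit}) applied with $X_0 = X$, reducing everything to iterated limits of bounded numerical sequences.

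The unit is immediate since $g\mathbf{1} = \mathbf{1}$ for all $g \in G$, so its orbit is a singleton. Linearity and closure under products I will handle together via a diagonal subsequence argument, which is the step I expect to be the main obstacle. Given WAP functions $f, h$ and sequences $(g_n) \subseteq G$, $(x_m) \subseteq X$ along which both iterated limits of $g_n(f \diamond h)(x_m)$ exist (where $\diamond$ stands for either $+$ or $\cdot$), boundedness of $f$ and $h$ on the compact space $X$ lets me extract subsequences $(g_{n_k}), (x_{m_l})$ along which all four single limits
\[
A_k := \lim_l (g_{n_k} f)(x_{m_l}), \quad B_k := \lim_l (g_{n_k} h)(x_{m_l}), \quad C_l := \lim_k (g_{n_k} f)(x_{m_l}), \quad D_l := \lim_k (g_{n_k} h)(x_{m_l})
\]
exist, as do their outer limits. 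Applying the WAP hypothesis to $f$ and $h$ separately yields $\alpha := \lim_k A_k = \lim_l C_l$ and $\beta := \lim_k B_k = \lim_l D_l$. On the subsequences the two iterated limits of $g_n(f+h)(x_m)$ both equal $\alpha + \beta$, and the two iterated limits of $g_n(fh)(x_m)$ both equal $\alpha \beta$; since subsequences preserve iterated limits that already exist, the original two iterated limits agree in each case.

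For closure under uniform limits, suppose $f_k \to f$ in supremum norm with each $f_k$ WAP, and suppose $L_1$ and $L_2$ are the two iterated limits of $g_n f(x_m)$ along some sequences, both assumed to exist. Using that $G$ acts by isometries on $C(X)$, so $\|g_n f - g_n f_k\|_\infty \le \epsilon_k := \|f - f_k\|_\infty$, a diagonal extraction produces subsequences along which the analogous iterated limits for each $f_k$ exist and, by WAP of $f_k$, coincide in a common value $L^{(k)}$. Uniform approximation forces $|L_i - L^{(k)}| \le \epsilon_k$ for $i = 1, 2$, and letting $k \to \infty$ yields $L_1 = L_2$.

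The principal difficulty lies in the product case: the naive attempt to deduce weak relative compactness of $\{g(fh) : g \in G\}$ from that of $\{gf : g \in G\}$ and $\{gh : g \in G\}$ fails because multiplication in $C(X)$ is only separately, not jointly, continuous in the weak topology. Passing through the pointwise double limit criterion together with the diagonal extraction above is the standard workaround, and a slight variant of the same extraction then handles uniform closure.
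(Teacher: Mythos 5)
Your plan is correct and is exactly the route the paper intends: the paper gives no proof, remarking only that the fact "can be easily shown using Fact \ref{double limit}", and your argument — the double limit criterion combined with diagonal extraction of subsequences to make all single and iterated limits exist, then comparing the iterated limits termwise for sums, products, and uniform limits — is the standard implementation of that remark. All steps check out, including the key observations that $(g(fh))(x)=(gf)(x)(gh)(x)$, that iterated limits persist along subsequences once they exist, and that the action is isometric for the uniform-closure step.
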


The above fact is well-known and can be easily shown using Fact \ref{double limit}.
Combining it with the Stone-Weierstrass theorem, we obtain the second fact:

\begin{fact} \label{separating points: WAP}
    If $\mathcal{A}\subseteq C(X)$ is a family of functions that separate points, then $(G,X)$ is WAP if and only if every $f\in \mathcal{A}$ is WAP.
\end{fact}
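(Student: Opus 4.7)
The forward implication is immediate: if $(G,X)$ is WAP, then by definition every $f\in C(X)$ is WAP, so in particular every $f\in \mathcal{A}$ is.

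For the converse, the plan is a direct Stone-Weierstrass argument combined with Fact \ref{wap closed unital}. Assume every $f\in\mathcal{A}$ is WAP. Let $\mathcal{W}\subseteq C(X)$ denote the collection of all WAP functions. By Fact \ref{wap closed unital}, $\mathcal{W}$ is a closed (in the supremum norm) unital subalgebra of $C(X)$ containing $\mathcal{A}$. Let $\mathcal{B}$ be the smallest closed unital subalgebra of $C(X)$ containing $\mathcal{A}$; then $\mathcal{B}\subseteq\mathcal{W}$.

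Since $\mathcal{A}$ separates points of $X$ and $\mathcal{A}\subseteq\mathcal{B}$, the subalgebra $\mathcal{B}$ also separates points of $X$, and contains the constants (by unitality). The space $X$ is compact Hausdorff, so the real-valued Stone-Weierstrass theorem applies and yields $\mathcal{B}=C(X)$. Therefore $C(X)=\mathcal{B}\subseteq\mathcal{W}$, i.e.\ every continuous function on $X$ is WAP, which means that $(G,X)$ is a WAP flow.

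There is no real obstacle here: once Fact \ref{wap closed unital} (closedness of WAP functions under the closed unital subalgebra operations) is available, the result is essentially a one-line application of Stone-Weierstrass. The only point worth checking carefully, should one wish to spell it out, is that the closure operation and the algebra operations in Fact \ref{wap closed unital} are taken in the same sense (supremum norm, pointwise operations) as those used to generate $\mathcal{B}$, which is evident from the setup.
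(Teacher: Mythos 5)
Your argument is correct and is exactly the route the paper takes: it derives this fact by combining Fact \ref{wap closed unital} (WAP functions form a closed unital subalgebra of $C(X)$) with the Stone--Weierstrass theorem. Nothing is missing.
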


\begin{defin}\label{defin: indep functons}
    We say that a sequence of functions $(f_n)_{n<\omega} \in C(X)$ is \emph{independent} if there are real numbers $r<s$ such that $$ \bigcap_{n\in P}f_n^{-1}(-\infty,r) \cap \bigcap_{n\in M}f_n^{-1}(s,\infty)\neq \emptyset$$ for all finite disjoint $P,M\subset\omega$. Given a dense $X_0\subseteq X$, we can equivalently require $$ \bigcap_{n\in P}f_n^{-1}(-\infty,r) \cap \bigcap_{n\in M}f_n^{-1}(s,\infty)\cap X_0\neq \emptyset$$ for all finite disjoint $P,M\subset\omega$.
\end{defin}

\begin{defin}
    Let $\{ f_n: X\to \R \}_{n\in \mathbb{N}}$ be a uniformly bounded sequence of functions. We say that this sequence is an $\ell_1$-sequence on $X$ if there exists a real constant $a>0$ such that for all $n\in \mathbb{N}$ and choices of real numbers $c_1,\dots,c_n$ we have $$ a\cdot \sum_{i=1}^n \lvert c_i\lvert \leq \left\lVert \sum_{i=1}^n c_i f_i \right\rVert_\infty.$$
\end{defin}

The following equivalences can be found in \cite[Theorem 2.4]{Glasner2018}

\begin{fact}\label{l_1 and indep seq}
    The following are equivalent for a bounded $F\subseteq C(X)$:
\begin{itemize}
    \item $F$ does not contain an independent sequence.
    \item  $F$ does not contain an $\ell_1$-sequence.
    \item Each sequence in $F$ has a pointwise convergent subsequence in $\R^X$.
\end{itemize}
\end{fact}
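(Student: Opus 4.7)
The plan is to establish the equivalences via the cycle $\neg$(i) $\Rightarrow$ $\neg$(iii) $\Rightarrow$ $\neg$(ii) $\Rightarrow$ $\neg$(i), using a compactness argument, Rosenthal's $\ell_1$-theorem, and a Ramsey-type extraction, respectively. For the first step, suppose $(f_n) \subseteq F$ is independent with thresholds $r < s$. For each $N$, independence yields $x_N \in X$ with $f_{2k-1}(x_N) < r$ and $f_{2k}(x_N) > s$ for all $k \leq N$. Since $X$ is compact, a subnet of $(x_N)$ converges to some $x^* \in X$, and continuity of the $f_i$ gives $f_{2k-1}(x^*) \leq r$ and $f_{2k}(x^*) \geq s$ for every $k$. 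Hence $(f_n(x^*))_n$ has two subsequences separated by at least $s - r > 0$, so no subsequence of $(f_n)$ converges pointwise on $X$.

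For $\neg$(iii) $\Rightarrow$ $\neg$(ii), the key tool is Rosenthal's $\ell_1$-theorem, which asserts that any bounded sequence in a Banach space has either a weakly Cauchy subsequence or an $\ell_1$-subsequence. Its applicability here relies on the fact that for a uniformly bounded sequence in $C(X)$ with $X$ compact, ``weakly Cauchy'' and ``pointwise Cauchy'' coincide: one direction uses Dirac measures, and the other uses the Riesz representation $C(X)^* \simeq M(X)$ together with the dominated convergence theorem. Thus a sequence in $F$ with no pointwise convergent subsequence has no weakly Cauchy subsequence, and Rosenthal produces an $\ell_1$-subsequence inside $F$.

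The remaining step $\neg$(ii) $\Rightarrow$ $\neg$(i) is the main technical obstacle: given an $\ell_1$-sequence $(f_n) \subseteq F$ with constant $a > 0$, one must extract an independent subsequence. This is the combinatorial core of the Bourgain--Fremlin--Talagrand dichotomy. The plan is to exploit the $\ell_1$ lower bound $\|\sum_{i=1}^{n} \varepsilon_i f_{n_i}\|_\infty \geq a n$ for sign sequences $\varepsilon_i \in \{\pm 1\}$ to locate, for each finite $\pm 1$-pattern, witness points $x \in X$ on which the corresponding functions take values far apart, and then to apply Ramsey's theorem to stabilize these patterns along a subsequence, producing fixed thresholds $r < s$ realizing independence. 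This is essentially Farahat's approach to Rosenthal's theorem and is classical, as reflected in \cite[Theorem 2.4]{Glasner2018}.
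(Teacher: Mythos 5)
First, a framing remark: the paper does not prove this Fact at all --- it is quoted verbatim from \cite[Theorem 2.4]{Glasner2018} --- so your argument has to stand on its own. Your first two legs are essentially fine. In $\neg(\mathrm{i})\Rightarrow\neg(\mathrm{iii})$ there is a small imprecision at the end: the point $x^*$ only witnesses that the \emph{full} sequence $(f_n)$ fails to converge at $x^*$; to rule out convergence of every subsequence you must add that any subsequence of an independent sequence is independent with the same thresholds $r<s$, and rerun the compactness argument on it. The reduction of $\neg(\mathrm{iii})\Rightarrow\neg(\mathrm{ii})$ to Rosenthal's $\ell_1$-theorem, via the identification of weak Cauchyness with pointwise Cauchyness for uniformly bounded sequences in $C(X)$, is correct.

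The genuine gap is the leg $\neg(\mathrm{ii})\Rightarrow\neg(\mathrm{i})$, which you describe but do not prove, and whose sketched strategy would not go through as stated. The $\ell_1$ lower bound applied to a sign pattern $\varepsilon\in\{\pm 1\}^n$ yields a point $x_\varepsilon$ with $\lvert\sum_{i\le n}\varepsilon_i f_i(x_\varepsilon)\rvert\ge an$; since this controls only the \emph{sum}, it forces a positive \emph{proportion} of the terms $\varepsilon_i f_i(x_\varepsilon)$ to exceed $a/2$, but not every $f_i(x_\varepsilon)$ to sit on the prescribed side of fixed thresholds, and the exceptional indices vary with $\varepsilon$. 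Producing one infinite index set on which \emph{all} sign patterns are simultaneously realized with fixed $r<s$ is precisely the hard combinatorial content (an Elton--Pajor-type shattering extraction); ``apply Ramsey to stabilize the patterns'' does not supply it, since Ramsey's theorem colours finite subsets while you need to control all $2^n$ patterns on each $n$-subset at once. Note also that Farahat's argument, which you invoke, proves $\neg(\mathrm{iii})\Rightarrow\neg(\mathrm{i})$ (no pointwise convergent subsequence implies an independent subsequence), not $\neg(\mathrm{ii})\Rightarrow\neg(\mathrm{i})$. The standard repair is to reorient the cycle so that the only hard step is the one you can legitimately quote: prove the easy $\neg(\mathrm{i})\Rightarrow\neg(\mathrm{ii})$ (an independent sequence with gap $s-r$ is an $\ell_1$-sequence with constant $(s-r)/2$, by evaluating $\sum c_i f_i$ at the two witness points corresponding to the partition $\{i:c_i\ge 0\}$, $\{i:c_i<0\}$ and its reverse), the easy $\neg(\mathrm{ii})\Rightarrow\neg(\mathrm{iii})$ (every subsequence of an $\ell_1$-sequence is again an $\ell_1$-sequence, hence not weakly Cauchy, hence not pointwise convergent), and then invoke Rosenthal's independence lemma once, for $\neg(\mathrm{iii})\Rightarrow\neg(\mathrm{i})$.
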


\begin{defin}
    We say that a function $f\in C(X)$ is \emph{tame} if $\{gf:g\in G\}$ does not contain an independent sequence. A flow $(G,X)$ is {\em tame} if every $f\in C(X)$ is tame.
\end{defin}

The next two facts will be useful throughout this paper:

\begin{fact}\label{tame closed unital}
    For any flow $(G,X)$, the tame functions form a closed (with the supremum norm) unital subalgebra of $C(X)$. 
\end{fact}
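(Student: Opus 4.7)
The plan is to use the sequential-compactness characterization of tameness provided by Fact~\ref{l_1 and indep seq}: a function $f \in C(X)$ is tame if and only if every sequence in the orbit $\{gf : g \in G\}$ admits a pointwise convergent subsequence in $\R^X$. This reformulation behaves well with respect to algebraic operations and uniform limits, so the verification essentially reduces to manipulating extracted subsequences, mirroring the proof of Fact~\ref{wap closed unital}.

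Unitality is immediate because constant functions are $G$-fixed, so their orbits consist of a single function. For the algebraic closure properties, suppose $f, g \in C(X)$ are tame and $c \in \R$. The crucial observation is that the $G$-action respects pointwise operations: $h(f+g) = hf + hg$, $h(cf) = c(hf)$, and $h(fg) = (hf)(hg)$ for every $h \in G$. Given any sequence $(h_n)$ in $G$, tameness of $f$ allows us to extract a subsequence along which $(h_n f)$ converges pointwise; then tameness of $g$ allows us to pass to a further subsequence along which $(h_n g)$ also converges pointwise. Along this common subsequence, $h_n(f+g)$, $h_n(cf)$, and $h_n(fg)$ all converge pointwise, so $f+g$, $cf$, and $fg$ are tame by Fact~\ref{l_1 and indep seq} again.

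The main step is closure under the supremum norm. Suppose $(f_n)$ is a sequence of tame functions converging uniformly to some $f \in C(X)$. Given a sequence $(h_k)$ in $G$, a standard diagonal extraction produces a subsequence $(h_{k_j})_j$ such that $(h_{k_j} f_n)_j$ converges pointwise for every fixed $n$. To conclude that $(h_{k_j} f)_j$ itself converges pointwise, note that the $G$-action is an isometry of $(C(X), \|\cdot\|_\infty)$, so $\|h_k f - h_k f_n\|_\infty = \|f - f_n\|_\infty$ for every $k$. Fixing $x \in X$ and $\varepsilon > 0$, choose $n$ with $\|f - f_n\|_\infty < \varepsilon / 3$; then for $j, j'$ sufficiently large, $|h_{k_j} f_n(x) - h_{k_{j'}} f_n(x)| < \varepsilon/3$, and the triangle inequality yields $|h_{k_j} f(x) - h_{k_{j'}} f(x)| < \varepsilon$. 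Hence $(h_{k_j} f(x))_j$ is Cauchy in $\R$ for every $x$, so $(h_{k_j} f)_j$ converges pointwise and $f$ is tame.

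The only genuine subtlety is the closure under uniform limits, where the diagonal extraction must be combined with the $\varepsilon/3$-approximation above; both ingredients are standard, and the algebraic closures require nothing beyond iterated subsequence extraction.
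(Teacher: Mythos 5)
Your proof is correct: the reformulation of tameness via Fact~\ref{l_1 and indep seq} (legitimate because the $G$-action is isometric, so each orbit is bounded), the subsequence extractions for the algebraic operations, and the diagonal-plus-$\varepsilon/3$ argument for sup-norm closure are all sound. The paper does not prove this fact itself but only cites \cite[Fact 2.72]{rzepecki2018bounded}; your argument is the standard one underlying that reference, built exactly from the ingredients the paper already provides.
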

A proof can be found e.g. in \cite[Fact 2.72]{rzepecki2018bounded}. From this fact and Stone-Weierstrass theorem, we obtain the following:

\begin{fact} \label{separating points: tame}
    If $\mathcal{A}\subseteq C(X)$ is a family of functions that separate points, then $(G,X)$ is tame if and only if every $f\in \mathcal{A}$ is tame.
\end{fact}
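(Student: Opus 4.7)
The plan is to mimic the proof of Fact \ref{separating points: WAP} verbatim, substituting ``tame'' for ``WAP'' and invoking Fact \ref{tame closed unital} in place of Fact \ref{wap closed unital}. The ``only if'' direction is immediate from the definition: if $(G,X)$ is tame then every $f\in C(X)$ is tame, so in particular every $f\in \mathcal{A}$ is tame.

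For the non-trivial direction, I would let $T:=\{f\in C(X) : f \text{ is tame}\}$. By Fact \ref{tame closed unital}, $T$ is a closed unital subalgebra of $C(X)$. Since by assumption $\mathcal{A}\subseteq T$ and $\mathcal{A}$ separates points of $X$, the larger set $T$ also separates points of $X$. Recall that $X$ is compact Hausdorff and $C(X)$ consists of real-valued continuous functions. Applying the (real) Stone--Weierstrass theorem, $T$ is dense in $C(X)$; since $T$ is closed, $T=C(X)$. Hence every $f\in C(X)$ is tame, i.e.\ $(G,X)$ is tame.

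There is no real obstacle here: once Fact \ref{tame closed unital} is in hand, the argument is a one-line application of Stone--Weierstrass, identical in structure to the WAP case. The only point to double-check is that the notion of separating points is phrased for $\mathcal{A}$ (a set that need not be an algebra), so one has to pass to the enveloping algebra $T$ before applying Stone--Weierstrass; this is routine.
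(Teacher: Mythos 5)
Your proposal is correct and is exactly the argument the paper intends: the paper derives this fact directly from Fact \ref{tame closed unital} together with the Stone--Weierstrass theorem, just as you do. Your remark about passing from $\mathcal{A}$ to the closed unital subalgebra of all tame functions before applying Stone--Weierstrass is the right (and only) point of care, and you handle it correctly.
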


By compactness, every epimorphism $\rho\colon X \to Y$ of $G$-flows is automatically a topological quotient map (meaning that a subset of $Y$ is open if and only if its preimage under $\rho$ is open) and can be identified with the quotient map $X \to X/\!\sim$, where $\sim$ is the closed, invariant equivalence relation on $X$ given by $x \sim y \iff \rho(x)=\rho(y)$. Conversely, every closed, invariant equivalence relation $\sim$ on $X$ yields the quotient $G$-flow $(G,X/\!\sim)$ and the quotient epimorphism $X \to X/\!\sim$.

We already defined a left action of $G$ on $C(X)$: $(gf)(x):=f(g^{-1}x)$. By a unital, closed $G$-subalgebra of $C(X)$ we mean a closed (in the supremuem norm topology) subalgbra of $C(X)$ which is closed under the left action of $G$ and contains the constant function equal to $1$. The next fact belongs to folklore and an easy proof is left as a routine exercise using Stone-Weierstrass theorem.

\begin{fact}\label{fact: relations vs algebras}
Let $(G,X)$ be a flow. There is a 1-1 correspondence between closed, invariant equivalence relations on $X$ and closed, unital  $G$-subalgebras of $C(X)$. 

More precisely, for any closed, invariant equivalence relation $\sim$ on $X$, let $Y:=X/\!\sim$ and $\pi \colon X \to Y$ be the quotient map, and put
$$\Phi(\sim):= \{f \circ \pi: f \in C(Y)\}.$$
Conversely, for any closed, unital $G$-subalgebra $\mathcal{A}$ of $C(X)$ put
$$\Psi(\mathcal{A}):= \; \sim,$$
where $x \sim y$ if and only if $f(x)=f(y)$ for all $f \in \mathcal{A}$. 
Then $\Phi$ is a bijection from the set of closed, invariant equivalence relations on $X$ to the set of closed, unital $G$-subalgebras of $C(X)$, and $\Psi$ is the inverse of $\Phi$.
\end{fact}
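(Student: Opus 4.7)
The plan is to check that the claimed maps $\Phi$ and $\Psi$ are well-defined (i.e., land in their declared targets) and are mutually inverse. Three of the four verifications are essentially bookkeeping; the nontrivial part is $\Phi \circ \Psi = \mathrm{id}$, which rests on the Stone--Weierstrass theorem applied on the quotient space.

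For well-definedness of $\Phi$, given a closed, $G$-invariant equivalence relation $E$ on $X$, the quotient $Y := X/E$ is compact Hausdorff (closedness of $E \subseteq X \times X$ ensures Hausdorffness), and the quotient map $\pi \colon X \to Y$ is a $G$-flow epimorphism. The pullback $f \mapsto f \circ \pi$ is an isometric algebra embedding $C(Y) \hookrightarrow C(X)$, so its image $\Phi(E)$ is a closed, unital subalgebra of $C(X)$, and it is $G$-invariant because $g(f \circ \pi) = (gf) \circ \pi$. For well-definedness of $\Psi$, the relation is obviously an equivalence relation, it is closed as the intersection $\bigcap_{f \in \mathcal{A}} \{(x,y) : f(x) = f(y)\}$ of closed sets, and it is $G$-invariant since $f(gx) = (g^{-1}f)(x)$ and $g^{-1}f \in \mathcal{A}$. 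For $\Psi \circ \Phi = \mathrm{id}$, the forward inclusion is trivial because functions factoring through $\pi$ are constant on $E$-classes; the converse follows because if $(x,y) \notin E$ then $\pi(x) \neq \pi(y)$ in the compact Hausdorff space $Y$, and Urysohn's lemma provides $f \in C(Y)$ with $(f \circ \pi)(x) \neq (f \circ \pi)(y)$.

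The heart of the proof is $\Phi \circ \Psi = \mathrm{id}$. Fix $\mathcal{A}$, set $E := \Psi(\mathcal{A})$ and $Y := X/E$. Each $h \in \mathcal{A}$ is constant on $E$-classes by definition, hence factors as $h = \tilde h \circ \pi$, and $\tilde h$ is continuous because $\pi$ is a topological quotient map; this gives $\mathcal{A} \subseteq \Phi(E)$. For the reverse inclusion, the image $\tilde{\mathcal{A}} := \{\tilde h : h \in \mathcal{A}\} \subseteq C(Y)$ is a closed, unital subalgebra of $C(Y)$ (inherited from $\mathcal{A}$ via the isometric pullback $C(Y) \hookrightarrow C(X)$) that separates the points of $Y$ (exactly by the definition of $E = \Psi(\mathcal{A})$). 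The Stone--Weierstrass theorem therefore forces $\tilde{\mathcal{A}} = C(Y)$, which yields $\Phi(E) \subseteq \mathcal{A}$. This Stone--Weierstrass step is the only piece of real content and is what I expect to be the main obstacle in a full write-up, though it is entirely standard; everything else reduces to tracking how $\pi$, the pullback, and the $G$-action interact.
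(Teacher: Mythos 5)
Your proof is correct and is precisely the "routine exercise using Stone--Weierstrass" that the paper explicitly declines to write out: the paper gives no proof of this Fact, only the hint that Stone--Weierstrass is the key ingredient, and your argument (well-definedness of both maps, Urysohn for $\Psi\circ\Phi=\mathrm{id}$, Stone--Weierstrass on the quotient for $\Phi\circ\Psi=\mathrm{id}$) fills in exactly that outline. No gaps; all the point-set details (Hausdorffness of $X/E$ for closed $E$, the isometric pullback, the $G$-equivariance computations) are handled correctly.
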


\begin{remark}\label{remark: transfer of WAP and tame}
Let  $\rho\colon X \to Y$ be an epimorphism of $G$-flows. Let $h \in C(Y)$ and $f:= h\rho$. Then $f \in C(X)$ and:
\begin{enumerate}
\item $f$ is WAP if and only if $h$ is WAP.
\item $f$ is tame if and only if $h$ is tame.
\end{enumerate}
\end{remark}

\begin{proof}
Item (1) follows immediately from the characterization of WAP via the double limit criterion from Fact \ref{double limit}. Item (2) follow directly from the definition of tame flows.
\end{proof}

\begin{cor}\label{corollary: finest with WAP/tame quotient}
Let $(G,X)$ be a flow.
\begin{enumerate}
\item There exists a finest closed, invariant equivalence relation $F_{\textrm{WAP}}$ on $X$ such that the quotient flow $(G,X/F_{\textrm{WAP}})$ is WAP.
\item There exists a finest closed, invariant equivalence relation $F_{\textrm{tame}}$ on $X$ such that the quotient flow $(G,X/F_{\textrm{tame}})$ is tame.
\end{enumerate}
\end{cor}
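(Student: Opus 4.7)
The plan is to use the bijective correspondence from Fact \ref{fact: relations vs algebras} between closed, invariant equivalence relations on $X$ and closed, unital $G$-subalgebras of $C(X)$. For (1), I would let $\mathcal{A}_{\textrm{WAP}} \subseteq C(X)$ be the set of all WAP functions on $X$. By Fact \ref{wap closed unital}, $\mathcal{A}_{\textrm{WAP}}$ is a closed unital subalgebra; it is also $G$-invariant, since if $f$ is WAP then for any $g \in G$ the set $\{h(gf) : h \in G\} = \{(hg)f : h \in G\} \subseteq \{h f : h \in G\}$ is still relatively compact in the weak topology. Thus $\mathcal{A}_{\textrm{WAP}}$ is a closed, unital $G$-subalgebra of $C(X)$, so via Fact \ref{fact: relations vs algebras} it corresponds to a closed, invariant equivalence relation $F_{\textrm{WAP}}$ on $X$, defined by $x F_{\textrm{WAP}} y \iff f(x)=f(y) \text{ for all } f \in \mathcal{A}_{\textrm{WAP}}$.

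Next I would check that $(G, X/F_{\textrm{WAP}})$ is WAP. Let $\pi \colon X \to X/F_{\textrm{WAP}}$ be the quotient map. By the correspondence of Fact \ref{fact: relations vs algebras}, every $h \in C(X/F_{\textrm{WAP}})$ satisfies $h \circ \pi \in \mathcal{A}_{\textrm{WAP}}$, i.e.\ $h \circ \pi$ is WAP on $X$. By Remark \ref{remark: transfer of WAP and tame}(1), $h$ itself is WAP, so every continuous function on $X/F_{\textrm{WAP}}$ is WAP. Since such functions certainly separate points, Fact \ref{separating points: WAP} yields that $(G, X/F_{\textrm{WAP}})$ is a WAP flow.

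For minimality (i.e.\ that $F_{\textrm{WAP}}$ is the finest such relation), let $\sim$ be any closed, invariant equivalence relation on $X$ with $(G, X/{\sim})$ WAP, and let $\pi_\sim \colon X \to X/{\sim}$ be the quotient map. For every $h \in C(X/{\sim})$, the function $h$ is WAP by assumption, so by Remark \ref{remark: transfer of WAP and tame}(1), $h \circ \pi_\sim \in \mathcal{A}_{\textrm{WAP}}$. If $x F_{\textrm{WAP}} y$, then in particular $(h \circ \pi_\sim)(x) = (h \circ \pi_\sim)(y)$ for every $h \in C(X/{\sim})$; as $X/{\sim}$ is compact Hausdorff, $C(X/{\sim})$ separates its points, giving $\pi_\sim(x) = \pi_\sim(y)$, i.e.\ $x \sim y$. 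Hence $F_{\textrm{WAP}} \subseteq {\sim}$.

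Part (2) is entirely parallel, replacing Fact \ref{wap closed unital} by Fact \ref{tame closed unital}, Fact \ref{separating points: WAP} by Fact \ref{separating points: tame}, and invoking Remark \ref{remark: transfer of WAP and tame}(2) in place of (1); $G$-invariance of the algebra of tame functions follows from the fact that $\{h (gf) : h \in G\} \subseteq \{h f : h \in G\}$ cannot contain an independent sequence if the larger set does not. No substantial obstacle is expected: the only technical point worth double-checking is the $G$-invariance of the WAP/tame subalgebras, which is immediate from the definitions once one rewrites the orbit $\{h(gf)\}$ in terms of $\{h f\}$.
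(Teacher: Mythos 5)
Your proof is correct and follows exactly the paper's argument: identify the WAP (resp.\ tame) functions as a closed, unital $G$-subalgebra via Facts \ref{wap closed unital} and \ref{tame closed unital}, pass to the corresponding equivalence relation through Fact \ref{fact: relations vs algebras}, and use Remark \ref{remark: transfer of WAP and tame} to verify both that the quotient is WAP (resp.\ tame) and that the relation is finest. You merely spell out the $G$-invariance of the subalgebra and the minimality check, which the paper leaves as clear.
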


\begin{proof}
Let $\textrm{WAP}(X)\subseteq C(X)$ be the set of all WAP functions on $X$, and $\textrm{Tame}(X)\subseteq C(X)$ the set of all tame functions on $X$. By Facts \ref{wap closed unital} and \ref{tame closed unital}, we know that both $\textrm{WAP}(X)$ and $\textrm{Tame}(X)$ are closed, unital subalgebras of $C(X)$. And it is clear that they are $G$-subalgebras. By Fact \ref{fact: relations vs algebras} and Remark \ref{remark: transfer of WAP and tame}, the corresponding equivalence relations $F_{\textrm{WAP}}:=\Psi(\textrm{WAP}(X))$ and $F_{\textrm{tame}}:=\Psi(\textrm{Tame}(X))$ fulfill the requirements.
\end{proof}

We finally apply these general considerations to our model-theoretic context. So $\C$ is a monster model of $T$ (although here it could be actually any model of $T$), and $X$ is a $\emptyset$-type-definable set (so here $X$ does not denote a flow anymore). As explained in Section \ref{subsection: model theory}, we have the flow $(\aut(\C),S_X(\C))$. By Corollary \ref{corollary: finest with WAP/tame quotient}, there exists a finest closed, $\aut(\C)$-invariant equivalence relation $\fwap$ on $S_X(\C)$ such that the flow $( \aut(\C), S_X(\C)/\fwap )$ is WAP. Similarly, there exists a finest closed, $\aut(\C)$-invariant equivalence relation $\fta$ on $S_X(\C)$ such that the flow $( \aut(\C), S_X(\C)/\fta )$ is tame. Whenever we work with another model denoted by $\C'$, the corresponding equivalence relations will be denoted by $\fwapp$ and $\ftaa$, respectively.

\subsection{Infinitary definability patterns}\label{section: idp}

We introduce the necessary machinery of infinitary definability patterns structure on $S_X(\C)$ that will be used throughout the rest of the paper. The results here are based on the first author's course on topological dynamics in model theory, which is an alternative approach to Hrushovski's ``infinitary core" inspired by Pierre Simon's seminar notes on \cite{hrushovski2022definability}. This approach is a combination of Simon's approach to Hrushovski's definability patterns and topological dynamics, and has a potential to wide generalizations, e.g. to Keisler measures or more general topological dynamics contexts, which will be studied in the future. The notion of ip-minimal set below, some basic lemmas about it, and an application to get Theorem \ref{absoluteness of the core} are adapted from Simon's notes.

We will assume in this section that $\C$ is at least $\aleph_0$-saturated and strongly $\aleph_0$-homogeneous. Let $X$ be a $\emptyset$-type-definable set of tuples of arbitrary (possibly infinite) length $\lambda$ and $x$ be a corresponding tuple of variables (i.e., of length $\lambda$). All sets of parameters are contained in $\C$.

The notions of content of a tuple of types $\bar p$ (denoted by $c(\bar p)$) and of the order $\leq^c$ were already recalled in Definitions \ref{defin: content} and \ref{defin: content order}. Now, we recall  the notion of strong heir, which first appeared in \cite[Definition 3.2]{Krupiski2017BoundednessAA}.
\begin{defin}\label{strong heirs}
	Let $M \prec \C$ be contained in $B$ and $q(x)\in S(B)$.
	We say that $q(x)$ is a \emph{strong heir extension} of $q\!\upharpoonright_M(x)$ or that it is a {\em strong heir} over $M$ if for every finite $m\subseteq M$ and finite tuple of variables $y$
	$$(\forall \varphi(x,y)\in \mL)(\forall b\subseteq B)[ \varphi(x,b)\in q(x) \implies (\exists b'\subseteq M)(\varphi(x,b')\in q(x)\wedge b\underset{m}{\equiv}b') ].$$
\end{defin}

The next fact is \cite[Lemma 3.3]{Krupiski2017BoundednessAA}.

\begin{fact}\label{fact: existence of heirs}
Assume $M \subseteq A \subseteq \C$, where $M\prec \C$ is $\aleph_0$-saturated. Then each type $p(x) \in S(M)$ (in possibly infinitely many variables $x$) has an extension $p'(x) \in S(A)$ which is a strong heir over $M$.
\end{fact}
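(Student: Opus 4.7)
The plan is to build $p'$ as any completion in $S(A)$ of a suitable partial type $\Sigma(x)\supseteq p(x)$ that explicitly forbids the ``bad'' formulas. Define
\[
\Sigma(x) := p(x) \,\cup\, \{\neg \varphi(x,b) : \varphi \in \mL,\ b\subseteq A \text{ a finite tuple, and some finite } m\subseteq M \text{ witnesses } (\varphi,b)\},
\]
where a finite $m\subseteq M$ \emph{witnesses} $(\varphi,b)$ when no $b'\subseteq M$ with $b'\underset{m}{\equiv}b$ satisfies $\varphi(x,b')\in p$. Any complete $p'\in S(A)$ extending $\Sigma$ is automatically a strong heir over $M$: if $\varphi(x,b)\in p'$ and some finite $m\subseteq M$ admitted no $M$-heir $b'$, then $\neg \varphi(x,b)\in \Sigma\subseteq p'$ would contradict $\varphi(x,b)\in p'$.

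So the whole task reduces to showing that $\Sigma$ is consistent, which I would do by compactness. Suppose not; then a finite subset
\[
p_0(x) \,\cup\, \{\neg \varphi_i(x,b_i) : i<n\}
\]
is inconsistent, where $p_0\subseteq p$ is finite and each $(\varphi_i,b_i)$ is accompanied by a witnessing finite $m_i\subseteq M$. Only finitely many coordinates of $x$ occur in this set, so inconsistency is the assertion in $\C$ of the first-order sentence
\[
\theta(c,b_1,\dots,b_n) \;:=\; \forall x\Bigl(\bigwedge p_0(x)\to \bigvee_{i<n}\varphi_i(x,b_i)\Bigr),
\]
where $c\subseteq M$ collects the parameters appearing in $p_0$.

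The key step is to pull the $b_i$'s back to $M$ simultaneously, preserving the relevant types. Put $m:=c\cup\bigcup_{i<n}m_i$; this is a \emph{finite} subset of $M$, so by $\aleph_0$-saturation of $M$ the type $\tp(b_1,\dots,b_n/m)$ is realized by some $(b'_1,\dots,b'_n)$ inside $M$. Then $(b'_i)_i\underset{m}{\equiv}(b_i)_i$, so in particular $b'_i\underset{m_i}{\equiv}b_i$ for each $i$ and $\theta(c,b'_1,\dots,b'_n)$ still holds in $\C$. Since $p\supseteq p_0$ is a complete type over $M$, this forces $\bigvee_i \varphi_i(x,b'_i)\in p$, whence $\varphi_i(x,b'_i)\in p$ for some $i$, contradicting the fact that $m_i$ was chosen to witness $(\varphi_i,b_i)$.

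The only subtle point worth flagging is exactly this simultaneous pullback: one must amalgamate all the individual witnessing sets $m_i$ and the $M$-parameters of $p_0$ into a single finite $m$ \emph{before} invoking saturation, because the contradiction needs all of the $b'_i$'s to be produced at once with their correct types. Once this is done, only $\aleph_0$-saturation of $M$ is used; strong homogeneity plays no role here.
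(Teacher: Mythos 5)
Your argument is correct. The paper does not prove this Fact itself (it is quoted from \cite{Krupiski2017BoundednessAA}, Lemma 3.3), but your construction — adjoining $\neg\varphi(x,b)$ for every pair admitting a witnessing finite $m$, and verifying consistency by amalgamating the finitely many witnessing sets into one finite $m\subseteq M$ before invoking $\aleph_0$-saturation — is exactly the standard argument for that lemma, and all the steps (in particular the simultaneous realization of $\tp(b_1,\dots,b_n/m)$ in $M$ and the resulting contradiction with the choice of the $m_i$) check out.
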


The next definition is due to Hrushovski \cite{hrushovski2022definability}.

\begin{defin}[Infinitary definability patterns structure on $S_X(\C)$]
For any $n$-tuple $\overline{\varphi} = (\varphi_1(x,y),\dots,\varphi_n(x,y))$ of formulas  in $\mL$ with $y$ finite and $q(y)\in S_y(\emptyset)$, we define $R_{\overline{\varphi},q}$ on $S_X(\C)^n$ by $$R_{\overline{\varphi},q}(\overline{p}) \iff (\varphi_1(x,y),\dots,\varphi_n(x,y),q)\notin c(\overline{p}), $$ where $\overline{p}=(p_1,\dots,p_n)$,
    i.e.,  there is no  $b\models q$  such that  $\varphi_1(x,b)\in p_1\wedge\cdots\wedge \varphi_n(x,b)\in p_n$. The \emph{infinitary definability patterns structure on} $S_X(\C)$ consists of all such relations $R_{\overline{\varphi},q}$.  We denote by $\End(S_X(\C))$ the semigroup of endomorphisms of $S_X(\C)$ as the infinitary definability patterns structure.
\end{defin}

In this section, we also consider $S_X(\C)$ as the flow $(\aut(\C),S_X(\C)).$ Recall that by $E(S_X(\C))$ we denote the Ellis semigroup of this flow (see Definition \ref{defin: ellis semigroup}).

\begin{lema}\label{S_X is homogeneous}
    We have the following:
    \begin{itemize}
        \item $\End(S_X(\C))=E(S_X(\C))$.
        \item $S_X(\C)$ is homogeneous in the sense that any partial morphism between substructures of $S_X(\C)$ (i.e., any structure preserving map $f:A\to B$ where $A,B\subseteq S_X(\C)$) extends to an endomorphism of $S_X(\C)$.
    \end{itemize}
\end{lema}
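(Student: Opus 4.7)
I would break the lemma into three steps: first, $E(S_X(\C)) \subseteq \End(S_X(\C))$; second, every partial morphism between substructures extends to an element of $E(S_X(\C))$; third, the reverse inclusion $\End(S_X(\C)) \subseteq E(S_X(\C))$ as an immediate consequence of the second step applied to $A = S_X(\C)$.

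For the first step, fix $\eta \in E(S_X(\C))$, written as a pointwise limit $\eta = \lim \pi_{\sigma_j}$ for some net of $\sigma_j \in \aut(\C)$. Each $\pi_\sigma$ preserves every $R_{\overline{\varphi},q}$: if $b \models q$ witnesses failure of the relation at $\sigma(\overline{p})$, meaning $\varphi_i(x,b) \in \sigma(p_i)$ for each $i$, then $\sigma^{-1}(b)$ again realizes $q$ (since $q \in S_y(\emptyset)$) and witnesses $\varphi_i(x, \sigma^{-1}(b)) \in p_i$, contradicting failure at $\overline{p}$. Preservation then passes to pointwise limits: failure of $R_{\overline{\varphi},q}(\eta(\overline{p}))$ is witnessed by some $b \models q$ with $\varphi_i(x,b) \in \eta(p_i)$ for each $i$; since each set $\{r : \varphi_i(x,b) \in r\}$ is clopen in $S_X(\C)$ and $\eta(p_i) = \lim \sigma_j(p_i)$, eventually $\varphi_i(x,b) \in \sigma_j(p_i)$ simultaneously for every $i$, contradicting step-$j$ preservation.

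For the second step, let $f \colon A \to B$ be a partial morphism. For any finite tuple $\overline{p} = (p_1,\dots,p_n)$ from $A$, preservation of each relation $R_{\overline{\varphi},q}$ unwinds to $c(f(\overline{p})) \subseteq c(\overline{p})$, i.e., $f(\overline{p}) \leq^c \overline{p}$. By Fact \ref{content and ellis semigroup}, there exists $\eta_{\overline{p}} \in E(S_X(\C))$ with $\eta_{\overline{p}}(p_i) = f(p_i)$ for every $i \leq n$. Index the family by the directed poset of finite subsets $A_0 \subseteq A$ (ordered by inclusion), choose $\eta_{A_0} \in E(S_X(\C))$ extending $f\upharpoonright A_0$, and use compactness of $E(S_X(\C)) \subseteq S_X(\C)^{S_X(\C)}$ to extract a subnet converging pointwise to some $\eta \in E(S_X(\C))$. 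For any fixed $p \in A$, the values $\eta_{A_0}(p)$ are eventually equal to $f(p)$ (as soon as $A_0 \ni p$), so $\eta\upharpoonright A = f$. By the first step, this $\eta$ is an endomorphism, which settles the homogeneity statement and, specializing to $A = S_X(\C)$ with $f \in \End(S_X(\C))$, forces $\eta = f$, yielding $\End(S_X(\C)) \subseteq E(S_X(\C))$.

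The only mildly subtle point is the compactness gluing in the second step: one must pass to a subnet inside the product space $S_X(\C)^{S_X(\C)}$ and verify that pointwise convergence there is compatible with the eventual-constancy of $\eta_{A_0}(p)$ on each $p \in A$. Everything else is a direct unpacking of the definition of the infinitary definability patterns structure, with Fact \ref{content and ellis semigroup} acting as the bridge that converts combinatorial content inclusions into genuine Ellis-semigroup elements.
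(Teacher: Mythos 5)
Your proof is correct and follows essentially the same route as the paper: the inclusion $E(S_X(\C))\subseteq \End(S_X(\C))$ is the right-to-left implication of Fact \ref{content and ellis semigroup} (which you re-derive directly via invariance of the relations under $\pi_\sigma$ and passage to pointwise limits), and the homogeneity statement together with the reverse inclusion comes from the left-to-right implication plus compactness of $E(S_X(\C))$, exactly as in the paper. The compactness gluing over the directed set of finite subsets of $A$ is handled correctly.
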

\begin{proof}
    $\End(S_X(\C))\supseteq E(S_X(\C))$ follows from the right to left implication of Fact \ref{content and ellis semigroup}.

    The inclusion $\End(S_X(\C))\subseteq E(S_X(\C))$ and homogeneity follow from the left to right implication of Fact \ref{content and ellis semigroup} and compactness of $E(S_X(\C))$.
\end{proof}

\begin{prop}\label{group isomorphism delta}
    Let $\mathcal{M}\unlhd E(S_X(\C))$ be a minimal left ideal and $u\in \mathcal{M}$ an idempotent. Let $\overline{\mathcal{J}}:=\Image(u)\subseteq S_X(\C)$. Then the map $$\delta: u\mathcal{M}\to \aut(\overline{\mathcal{J}})$$ given by $\delta(\eta):=\eta\!\!\upharpoonright_{\overline{\mathcal{J}}}$ is a group isomorphism, where $\aut(\overline{\mathcal{J}})$ is the group of automorphisms of $\overline{\mathcal{J}}$ as the infinitary definability patterns structure.
\end{prop}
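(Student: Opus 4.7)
The plan is to verify the four standard pieces (well-definedness, homomorphism, injectivity, surjectivity) with the central input being the identification $\End(S_X(\C))=E(S_X(\C))$ together with the homogeneity of $S_X(\C)$ from Lemma \ref{S_X is homogeneous}, and the basic identities $u\eta=\eta=\eta u$ that hold for every $\eta\in u\mathcal{M}$ (using Fact \ref{Ellis theorem}, noting that the group operation on $u\mathcal{M}$ is composition in $E(S_X(\C))$).

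\textbf{Well-definedness.} For $\eta\in u\mathcal{M}$, from $\eta=u\circ \eta$ I get $\Image(\eta)\subseteq \Image(u)=\overline{\mathcal{J}}$, so $\eta$ maps $\overline{\mathcal{J}}$ into itself. Since $u\mathcal{M}$ is a group with identity $u$, there is $\eta^{-1}\in u\mathcal{M}$ with $\eta\circ\eta^{-1}=\eta^{-1}\circ\eta=u$; on $\overline{\mathcal{J}}$, where $u$ is the identity, this shows that $\eta\!\!\upharpoonright_{\overline{\mathcal{J}}}$ is a bijection with inverse $\eta^{-1}\!\!\upharpoonright_{\overline{\mathcal{J}}}$. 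By Lemma \ref{S_X is homogeneous}, both $\eta$ and $\eta^{-1}$ belong to $\End(S_X(\C))$, so they preserve every relation $R_{\overline{\varphi},q}$; hence $\eta\!\!\upharpoonright_{\overline{\mathcal{J}}}\in \aut(\overline{\mathcal{J}})$.

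\textbf{Homomorphism and injectivity.} Since the group operation on $u\mathcal{M}$ is composition and both factors stabilise $\overline{\mathcal{J}}$, $\delta(\eta_1\cdot\eta_2)=(\eta_1\circ\eta_2)\!\!\upharpoonright_{\overline{\mathcal{J}}}=\delta(\eta_1)\circ\delta(\eta_2)$. For injectivity, suppose $\delta(\eta_1)=\delta(\eta_2)$. For any $p\in S_X(\C)$, $u(p)\in\overline{\mathcal{J}}$, so $\eta_1(u(p))=\eta_2(u(p))$. Using $\eta_i\circ u=\eta_i$ (the last item of Fact \ref{Ellis theorem} applied with $s=\eta_i\in\mathcal{M}$), this gives $\eta_1(p)=\eta_2(p)$, so $\eta_1=\eta_2$.

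\textbf{Surjectivity.} This is the main point, where homogeneity enters. Given $\sigma\in \aut(\overline{\mathcal{J}})$, the map $\sigma$ is a partial morphism between substructures of $S_X(\C)$ (namely the substructure $\overline{\mathcal{J}}$ mapped to itself), so by Lemma \ref{S_X is homogeneous} it extends to an $\eta_0\in \End(S_X(\C))=E(S_X(\C))$. Set $\eta:=u\circ\eta_0\circ u$. Since $\mathcal{M}$ is a left ideal, $\eta_0\circ u\in\mathcal{M}$, and then $\eta=u\circ(\eta_0\circ u)\in u\mathcal{M}$. For $p\in\overline{\mathcal{J}}$ one has $u(p)=p$ and $\sigma(p)\in\overline{\mathcal{J}}$, so $\eta(p)=u(\eta_0(p))=u(\sigma(p))=\sigma(p)$, i.e.\ $\delta(\eta)=\sigma$.

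The only step that is not bookkeeping is surjectivity, and its whole weight is carried by homogeneity of $S_X(\C)$ as an infinitary definability patterns structure (Lemma \ref{S_X is homogeneous}), which itself rests on the content characterisation of $E(S_X(\C))$ from Fact \ref{content and ellis semigroup}. The reason the construction $\eta=u\circ\eta_0\circ u$ works is that pre- and post-composing by $u$ absorbs any freedom $\eta_0$ has outside $\overline{\mathcal{J}}$ while simultaneously placing the result in $u\mathcal{M}$; once this is seen, the remaining identities $u\eta=\eta$ and $\eta u=\eta$ from Ellis' theorem make all the verifications immediate.
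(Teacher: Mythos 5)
Your proof is correct and follows essentially the same route as the paper: the paper obtains the monomorphism part by citing Remark \ref{restrictio to image is monomorphism} (whose content is exactly your computation with $\eta\circ u=\eta$), gets that $\delta$ lands in $\aut(\overline{\mathcal{J}})$ from Lemma \ref{S_X is homogeneous}, and derives surjectivity from homogeneity — for which your explicit construction $\eta:=u\circ\eta_0\circ u$ is the intended argument. You have merely written out the details the paper leaves implicit.
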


\begin{proof}
By Remark \ref{restrictio to image is monomorphism}, $\delta$ is a monomorphism from $u\mathcal{M}$ to $\Sym(\overline{\mathcal{J}})$. The fact that $\delta$ takes values in $\aut(\overline{\mathcal{J}})$ follows from the first part of Lemma \ref{S_X is homogeneous}. The fact that $\delta$ is onto $\aut(\overline{\mathcal{J}})$ follows by homogeneity of $S_X(\C)$.
\end{proof}

\begin{prop}\label{isomorphic image idempotent}
    For any minimal left ideals $\mathcal{M}, \mathcal{N}$ of $E(S_X(\C))$ and idempotents $u\in \mathcal{M}$ and $v\in \mathcal{N}$, $\Image(u)\cong\Image(v)$ as the infinitary definability patterns structures.
\end{prop}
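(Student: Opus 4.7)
The plan is to split the argument into two steps: first reduce to the case where the two idempotents lie in the same minimal left ideal, and then prove the isomorphism in that case.

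For the reduction, I would apply the fourth item of Fact~\ref{Ellis theorem} to $u\in\mathcal{J}(\mathcal{M})$ and $\mathcal{N}$ to obtain an idempotent $v'\in\mathcal{J}(\mathcal{N})$ with $v'u=u$ and $uv'=v'$. The first identity means $v'(u(p))=u(p)$ for every $p\in S_X(\C)$, so $v'$ fixes every point of $\Image(u)$; since the fixed points of the idempotent $v'$ are exactly $\Image(v')$, this gives $\Image(u)\subseteq\Image(v')$. Symmetrically, $uv'=v'$ yields $\Image(v')\subseteq\Image(u)$. Hence $\Image(u)=\Image(v')$ as subsets, and in particular as substructures, of $S_X(\C)$.

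For the remaining step, suppose $v,v'\in\mathcal{J}(\mathcal{N})$ are two idempotents in the same minimal left ideal. Applying the fifth item of Fact~\ref{Ellis theorem} inside $\mathcal{N}$ gives $v'v=v'$ and $vv'=v$. Consider then the restrictions $v|_{\Image(v')}\colon \Image(v')\to\Image(v)$ and $v'|_{\Image(v)}\colon \Image(v)\to\Image(v')$. For $p\in\Image(v')$ one has $v'(v(p))=(v'v)(p)=v'(p)=p$, and for $q\in\Image(v)$ one has $v(v'(q))=(vv')(q)=v(q)=q$, so these two restrictions are mutually inverse bijections. By Lemma~\ref{S_X is homogeneous}, $v,v'\in E(S_X(\C))=\End(S_X(\C))$, hence each restriction preserves the relations $R_{\overline{\varphi},q}$; a mutually inverse pair of structure preserving bijections is an isomorphism of substructures, so $\Image(v')\cong\Image(v)$.

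Combining the two steps yields $\Image(u)=\Image(v')\cong\Image(v)$, as required. I do not expect any real obstacle: the proof is essentially an unwinding of the semigroup identities provided by Fact~\ref{Ellis theorem}, the only substantive input being Lemma~\ref{S_X is homogeneous} which guarantees that elements of the Ellis semigroup are endomorphisms of the infinitary definability patterns structure (so that the bijections constructed above automatically respect all the relations $R_{\overline{\varphi},q}$).
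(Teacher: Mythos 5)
Your proposal is correct and follows essentially the same route as the paper: reduce to a single minimal left ideal via the fourth item of Fact~\ref{Ellis theorem} (the paper transfers $v$ into $\mathcal{M}$ rather than $u$ into $\mathcal{N}$, a purely symmetric choice), then use the identities $uv=u$, $vu=v$ from the last item to exhibit the restrictions as mutually inverse bijections, with Lemma~\ref{S_X is homogeneous} guaranteeing they are morphisms. Your write-up is just a more explicit unwinding of the same argument.
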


    \begin{proof}
        By Fact \ref{Ellis theorem}, there is an idempotent $u'\in \mathcal{M}$ such that $vu'=u'$ and $u'v=v$. Then, $\Image(u')=\Image(v)$, so we can assume that $\mathcal{M}=\mathcal{N}$ without loss of generality. Then, $uv=u$ and $vu=v$, and so the maps\begin{align*}
            u\!\!\upharpoonright_{\Image(v)}: &\Image(v)\to \Image (u)\\
            &\text{ and }\\
            v\!\!\upharpoonright_{\Image(u)}: &\Image(u)\to \Image (v)
        \end{align*} are mutual inverses. Hence, $\Image(v)\cong \Image (u)$ by Lemma \ref{S_X is homogeneous}.
    \end{proof}

By Proposition \ref{isomorphic image idempotent}, up to isomorphism, both $\overline{\mathcal{J}}=\Image(u)$ and $\aut(\overline{\mathcal{J}})$ do not depend on the choice of the minimal left ideal $\mathcal{M}$ and idempotent $u\in\mathcal{M}$.

The following definition is an analog of Hrushovski's definition of pp-topology for definability patterns \cite{hrushovski2022definability}.

\begin{defin}[ipp-topology]\label{definition: ipp-topology}
    The \emph{ipp-topology} on $\aut(\overline{\mathcal{J}})$ is given by the subbasis of closed sets consisting of $$F_{\bar{\varphi}, \bar{p},\bar{q}, r}:=\{f\in \aut(\overline{\mathcal{J}}): R_{\overline{\varphi},r}(f(p_1),\dots, f(p_m), q_{1},\dots, q_n ) \} $$ for any $\varphi_1(x,y),\dots,\varphi_{m+n}(x,y)\in \mL$, $r\in S_y(\emptyset)$, and $p_1,\dots,p_m,q_{1},\dots,q_n\in \overline{\mathcal{J}} $.
\end{defin}

One can easily check that the above subbasis is in fact a basis of closed sets, i.e. the union of any two sets from this subbasis is the intersection of a family of subbasic sets, but we will not need it.

The proof of the next proposition is a bit technical, so we move it to Appendix \ref{Appendix}.

\begin{prop}\label{delta is homeomorphism}
    The map  $$\delta: u\mathcal{M}\to \aut(\overline{\mathcal{J}}) $$ from Proposition \ref{group isomorphism delta}  is a homeomorphism when $u\mathcal{M}$ is equipped with the $\tau$-topology and $\aut(\overline{\mathcal{J}})$ with the ipp-topology.
\end{prop}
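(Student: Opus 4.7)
The strategy is to prove $\delta(\cl_\tau(B)) = \cl_{\mathrm{ipp}}(\delta(B))$ for every $B \subseteq u\mathcal{M}$, from which the homeomorphism follows since $\delta$ is already a bijection by Proposition~\ref{group isomorphism delta}. As a preliminary, I would give a content-theoretic reformulation of the ipp-topology: unwinding Definition~\ref{definition: ipp-topology} and packaging a finite intersection of subbasic open sets into a single one (by concatenating tuples $\bar p$, $\bar q$, $\bar\varphi$ and replacing $r$ by the joint $\emptyset$-type of the individual witnesses) shows that $f \in \cl_{\mathrm{ipp}}(A)$ if and only if, for every finite $\bar p, \bar q \in \overline{\mathcal{J}}$, every $\bar\varphi$, and every $r \in S_y(\emptyset)$, whenever $(\bar\varphi, r) \in c(f(\bar p), \bar q)$ there exists $f' \in A$ with $(\bar\varphi, r) \in c(f'(\bar p), \bar q)$.

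For the forward inclusion $\delta(\cl_\tau(B)) \subseteq \cl_{\mathrm{ipp}}(\delta(B))$, write $\eta \in \cl_\tau(B)$ as a pointwise limit $\eta = \lim_\alpha \sigma_\alpha \eta_\alpha$ in $E(S_X(\C))$ with $\sigma_\alpha \in \aut(\C)$, $\eta_\alpha \in B$, and $\sigma_\alpha \to u$. Given a positive system with witness $b \models r$ expressing $\varphi_i(x,b) \in \eta(p_i)$ for $i\le m$ and $\varphi_{m+j}(x,b) \in q_j$ for $j\le n$ (with $p_i, q_j \in \overline{\mathcal{J}}$), pointwise convergence of $\sigma_\alpha(\eta_\alpha(p_i))$ into the clopen $[\varphi_i(x,b)] \subseteq S_X(\C)$ yields $\varphi_i(x, \sigma_\alpha^{-1}(b)) \in \eta_\alpha(p_i)$ eventually; similarly, $\sigma_\alpha(q_j) \to u(q_j) = q_j$ (since $q_j \in \overline{\mathcal{J}} = \Image(u)$) gives $\varphi_{m+j}(x, \sigma_\alpha^{-1}(b)) \in q_j$ eventually. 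Since $\sigma_\alpha^{-1}(b) \models r$ as $r$ is over $\emptyset$, for $\alpha$ large enough $\eta_\alpha \in B$ witnesses the same positive system, so $\delta(\eta) \in \cl_{\mathrm{ipp}}(\delta(B))$.

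For the reverse inclusion $\cl_{\mathrm{ipp}}(\delta(B)) \subseteq \delta(\cl_\tau(B))$ (the main obstacle), fix $\eta \in u\mathcal{M}$ with $\delta(\eta) \in \cl_{\mathrm{ipp}}(\delta(B))$. I would construct a net of pairs $(\sigma_\alpha, \eta_\alpha) \in \aut(\C) \times B$ with $\sigma_\alpha \to u$ in $E(S_X(\C))$ such that, after passing to a convergent subnet (by compactness of $E(S_X(\C))$), the limit $\zeta$ of $\sigma_\alpha \eta_\alpha$ satisfies $\zeta(p) = \eta(p)$ for every $p \in \overline{\mathcal{J}}$. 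Then $u\zeta(p) = u(\eta(p)) = \eta(p)$ on $\overline{\mathcal{J}}$ (as $u$ is the identity on $\Image(u) = \overline{\mathcal{J}}$), so $\delta(u\zeta) = \delta(\eta)$, forcing $u\zeta = \eta$ by injectivity of $\delta$; hence $\eta = u\zeta \in u(u \circ B) = \cl_\tau(B)$. The directed set for the net indexes finite approximation data: specific formulas $\psi_k(x, c_k) \in \eta(p_k)$ for $p_k \in \overline{\mathcal{J}}$ (to be transferred to $\zeta$) together with a basic neighborhood of $u$ in $E(S_X(\C))$, which, by the Stone structure of each $S_X(\C)$, is a conjunction of positive conditions $\phi_l(x, c'_l) \in u(p^l)$. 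Applying the content characterization at $r := \tp((c_k, c'_l)_{k,l}/\emptyset)$ yields $\eta_\alpha \in B$ and a witness tuple $(b_k, b'_l) \models r$ realizing the combined positive system, and strong $\aleph_0$-homogeneity of $\C$ provides $\sigma_\alpha \in \aut(\C)$ sending $(b_k, b'_l)$ to $(c_k, c'_l)$; the existence of a witness tuple realizing both kinds of constraint simultaneously is ensured by strong heirs (Fact~\ref{fact: existence of heirs}) over a countable $M \prec \C$ carrying the finite data.

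\textbf{Main obstacle.} The delicate point is the coherent choice of $\sigma_\alpha$ simultaneously approximating $u$ on a prescribed finite set of auxiliary types $p^l \in S_X(\C)$ (possibly outside $\overline{\mathcal{J}}$) and satisfying the positive-system conditions that force $\sigma_\alpha \eta_\alpha(p)$ to be close to $\eta(p)$ on $\overline{\mathcal{J}}$. Encoding both kinds of requirement into a single positive system and realizing them together via strong heirs over an appropriate small model, entirely in the spirit of the techniques of~\cite{Krupiski2017BoundednessAA}, is where the main technical effort lies.
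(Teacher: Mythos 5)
The forward inclusion $\delta(\cl_\tau(B))\subseteq\cl_{\mathrm{ipp}}(\delta(B))$ and your packaging of finite intersections of subbasic sets are fine and match the paper's continuity argument. The problem is the reverse inclusion, precisely at the point you flag as the ``main obstacle'': your plan requires a net of genuine automorphisms $\sigma_\alpha\in\aut(\C)$ with $\sigma_\alpha\to u$ in all of $E(S_X(\C))$, i.e.\ $\sigma_\alpha(p)\to u(p)$ for \emph{every} $p\in S_X(\C)$, including the auxiliary types $p^l$ outside $\overline{\mathcal{J}}$. But the ipp-topology only sees types from $\overline{\mathcal{J}}$: in Definition \ref{definition: ipp-topology} both the $\bar p$ and the $\bar q$ slots range over $\overline{\mathcal{J}}$, so the hypothesis $\delta(\eta)\in\cl_{\mathrm{ipp}}(\delta(B))$ produces witnesses only for positive systems whose types all lie in $\overline{\mathcal{J}}$. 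The condition you need on the witness, namely $\phi_l(x,b'_l)\in p^l$ for arbitrary $p^l\in S_X(\C)$ (so that $\sigma_\alpha$ with $\sigma_\alpha(b'_l)=c'_l$ approximates $u$ at $p^l$), cannot be encoded into such a system, and once $\sigma_\alpha$ is pinned down on the witness tuple by $\sigma_\alpha(\bar b')=\bar c$, nothing controls where it sends $p^l$. Strong heirs give you the existence of \emph{some} $\eta'\in E$ matching contents; they do not let you graft the extra constraints at types outside $\overline{\mathcal{J}}$ onto a witness that is handed to you by the ipp-closure hypothesis.

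The paper escapes this by not using the definition of $\cl_\tau$ directly. It invokes Lemma \ref{tau closure equivalence}: $\cl_\tau(A)$ equals the set of limits in $u\mathcal{M}$ of nets $\eta_i a_i$ with $\eta_i\in\mathcal{M}$ (elements of the minimal ideal, not of the acting group) and $\lim_i\eta_i=u$. One then takes $\eta_i:=\sigma_iu\in\mathcal{M}$, and the convergence $\sigma_iu\to u$ amounts to $\sigma_i\to\mathrm{id}$ pointwise on $\Image(u)=\overline{\mathcal{J}}$ only --- which is exactly what the $\bar q$-part of the ipp-data delivers (this is the role of the fixed types $q_1,\dots,q_n\in\overline{\mathcal{J}}$ in the subbasic sets). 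So your reverse inclusion has a genuine gap, and closing it requires this alternative characterization of the $\tau$-closure (or an equivalent device) rather than a refinement of the strong-heir argument.
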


\begin{defin}
    A subset $Q\subseteq S_X(\C)$ is \emph{ip-minimal} (from {\em infinitary patterns minimal}) if any morphism $f:Q\to S_X(\C)$ is an isomorphism onto $\Image(f)$.
\end{defin}

\begin{prop}\label{equivalences ip minimal}
    Let $\overline{p}$ be an enumeration of $S_X(\C)$ and $\overline{q}=\eta \overline{p}$ (coordinate-wise) for some $\eta\in E(S_X(\C))$. Then the following are equivalent:
    \begin{enumerate}
        \item $\overline{q}$ is $\leq^c$ minimal in $E(S_X(\C))\overline{p}$, where $\overline{q}'\leq^c \overline{q}''$ means $$(q'_{i_1},\dots q'_{i_n})\leq^c (q''_{i_1},\dots q''_{i_n})$$ for any finite sets of indices $i_1<\dots<i_n$, or equivalently, $$ \overline{q}'\leq^c \overline{q}''\iff \overline{q}'\in E(S_X(\C))\overline{q}''  .$$
        \item The coordinates of $\overline{q}$ form an ip-minimal subset $Q$.
        \item $\eta$ belongs to a minimal left ideal of $E(S_X(\C))$.
    \end{enumerate}
\end{prop}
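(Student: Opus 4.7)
My plan is to prove the three equivalences by showing $(1)\iff(3)$ via the translation between contents and left ideals in $E(S_X(\C))$, and then $(3)\Rightarrow(2)\Rightarrow(1)$ by exploiting homogeneity of $S_X(\C)$ (Lemma \ref{S_X is homogeneous}).

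For $(1)\iff(3)$, I would use the reformulation of $\leq^c$ from the statement itself: $\overline{q}'\leq^c\overline{q}''\iff \overline{q}'\in E(S_X(\C))\overline{q}''$, which is Fact \ref{content and ellis semigroup} applied coordinate-wise (together with compactness). Since $\overline{p}$ enumerates $S_X(\C)$, the map $\sigma\mapsto\sigma\overline{p}$ is a bijection between $E(S_X(\C))$ and $E(S_X(\C))\overline{p}$, and it converts left multiplication into the action on tuples. Under this identification, $\overline{q}=\eta\overline{p}$ is $\leq^c$-minimal in $E(S_X(\C))\overline{p}$ iff for every $\sigma\in E(S_X(\C))$ there is $\tau\in E(S_X(\C))$ with $\eta=\tau\sigma\eta$. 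This is precisely the condition that $E(S_X(\C))\eta$ equals $E(S_X(\C))\sigma\eta$ for every $\sigma$, which is equivalent to $E(S_X(\C))\eta$ being a minimal left ideal, and this holds iff $\eta$ lies in some minimal left ideal (using Fact \ref{Ellis theorem}, which says minimal left ideals are exactly the sets of the form $E(S_X(\C))a$ for $a$ in a minimal left ideal).

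For $(3)\Rightarrow(2)$, let $\mathcal{M}$ be the minimal left ideal containing $\eta$, so $\mathcal{M}=E(S_X(\C))\eta$ by Fact \ref{Ellis theorem}, and note $Q=\Image(\eta)$. Given any morphism $f\colon Q\to S_X(\C)$, by homogeneity extend it to some $g\in E(S_X(\C))=\End(S_X(\C))$. Then $g\eta\in\mathcal{M}$, so by minimality there exists $h\in E(S_X(\C))$ with $\eta=hg\eta$. Restricting to $Q$, for each $q\in Q$ (writing $q=\eta(p)$) we get $q=\eta(p)=h(g(\eta(p)))=h(f(q))$, so $h\!\upharpoonright_{f[Q]}$ is a two-sided inverse morphism of $f\colon Q\to f[Q]$ (left inverse by the above, and hence also right inverse on the image since $f$ is thereby a bijection onto $f[Q]$). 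Since $h$ is an endomorphism of $S_X(\C)$, its restriction is a morphism, so $f$ is an isomorphism onto $f[Q]$.

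For $(2)\Rightarrow(1)$, suppose $Q=\{\text{coords of }\overline{q}\}$ is ip-minimal and $\overline{q}'\leq^c\overline{q}$. Then $\overline{q}'=\sigma\overline{q}$ for some $\sigma\in E(S_X(\C))$, and $\sigma\!\upharpoonright_Q\colon Q\to S_X(\C)$ is a morphism. By ip-minimality it is an isomorphism onto $\sigma[Q]$, so its inverse is a morphism from $\sigma[Q]$ to $Q$. By homogeneity, this inverse extends to some $\tilde{\tau}\in E(S_X(\C))$, and then $\tilde{\tau}\sigma$ fixes every coordinate of $\overline{q}$, yielding $\tilde{\tau}\overline{q}'=\overline{q}$, i.e., $\overline{q}\leq^c\overline{q}'$; thus $\overline{q}$ is $\leq^c$-minimal. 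The main subtle point throughout is keeping straight which side of the identity one is on when converting between the content order, orbits $E\eta$, and images $\Image(\eta)$; beyond that, the proof is a direct application of Fact \ref{content and ellis semigroup} and Lemma \ref{S_X is homogeneous}.
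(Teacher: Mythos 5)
Your proof is correct and uses essentially the same tools as the paper's: the translation between the content preorder and the action of $E(S_X(\C))$ via Fact \ref{content and ellis semigroup}, the bijection $\sigma\mapsto\sigma\overline{p}$ coming from $\overline{p}$ enumerating $S_X(\C)$, and homogeneity (Lemma \ref{S_X is homogeneous}) to extend partial morphisms to elements of the Ellis semigroup. The only difference is the arrangement of implications (you prove $(1)\iff(3)$, $(3)\Rightarrow(2)$, $(2)\Rightarrow(1)$ while the paper proves $(1)\Rightarrow(2)\Rightarrow(1)$ and $(1)\Rightarrow(3)\Rightarrow(1)$), which is immaterial.
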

\begin{proof}
    $(1)\Rightarrow (2)$. Take any $f:Q\to S_X(\C)$. Lemma \ref{S_X is homogeneous} implies that $f$ can be extended to an endomorphism $\tilde{f}:S_X(\C)\to S_X(\C)$, and then $\eta:=\tilde{f} \in E(S_X(\C))$. Hence, $\eta\overline{q}\leq^c \overline{q}$. By minimality of $\overline{q}$, there is $\eta'\in E(S_X(\C))$ such that $\eta'\eta \overline{q}=\overline{q}$. Thus, $f$ is an isomorphism to its image by Lemma \ref{S_X is homogeneous}.

    $(2)\Rightarrow (1)$. Take any $\overline{q}'\in E(S_X(\C))\overline{p}$ such that $\overline{q}'\leq^c \overline{q}$, that is, $\overline{q}'=\eta \overline{q}$ for some $\eta\in E(S_X(\C))$. By Lemma \ref{S_X is homogeneous}, $f:=\eta\!\! \upharpoonright_Q: Q\to S_X(\C)$ is a morphism, hence it is an isomorphism to its image and has an inverse $f': \Image(f)\to Q$. By Lemma \ref{S_X is homogeneous}, $f'$ can be extended to $\eta'\in E(S_X(\C))$. Then $\eta'\overline{q}'=\overline{q}$. This implies that $\overline{q}\leq^c\overline{q}'$.

    $(1)\Rightarrow (3)$ Consider any $\eta' \in E(S_X(\C))$. Then, $\eta'\overline{q}\leq^c \overline{q}$ and $\eta'\overline{q}=\eta'\eta\overline{p}$. By $(1)$, $\overline{q}\leq^c\eta'\overline{q}$, so there is $\eta''\in E(S_X(\C))$ such that $\eta'' \eta' \overline{q}=\overline{q}$, that is $\eta''\eta'\eta \overline{p}=\eta\overline{p}$. Since $\overline{p}$ is an enumeration of $S_X(\C)$, we get $\eta''\eta'\eta=\eta$ as elements of $E(S_X(\C))$. Hence, $E(S_X(\C))\eta$ is a minimal left ideal.


$(3)\Rightarrow(1)$. Consider any $\overline{q}'\in E(S_X(\C))\overline{p}$ such that $\overline{q}'\leq^c\overline{q}$. Then, $$\overline{q}'=\eta'\overline{q}=\eta'\eta\overline{p}$$ for some $\eta'\in E(S_X(\C))$. Since $E(S_X(\C))\eta$ is a minimal left ideal, there is $\eta''\in E(S_X(\C))$ such that $\eta''\eta'\eta=\eta$. Thus, $\eta''\overline{q}'=\eta''\eta'\eta\overline{p}=\eta\overline{p}=\overline{q}$. Therefore, $\overline{q}\leq^c\overline{q}'$.
\end{proof}

\begin{cor} \label{ip-minimal exists}
    There exists an ip-minimal $Q\subseteq S_X(\C)$ with a morphism $$g:S_X(\C)\to Q.$$
\end{cor}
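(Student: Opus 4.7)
The plan is to deduce this corollary directly from Proposition \ref{equivalences ip minimal} together with the existence of minimal left ideals in the Ellis semigroup (Fact \ref{Ellis theorem}). The key observation is that condition (3) of Proposition \ref{equivalences ip minimal} involves an object (a minimal left ideal) whose existence is already guaranteed by general Ellis theory, so we can produce an ip-minimal set simply by picking a minimal left ideal and an element in it.

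In more detail, I would first invoke Fact \ref{Ellis theorem} to obtain a minimal left ideal $\mathcal{M} \unlhd E(S_X(\C))$, and select any $\eta \in \mathcal{M}$. Fix once and for all an enumeration $\bar{p}$ of $S_X(\C)$ and set $\bar{q} := \eta \bar{p}$ (coordinate-wise). Since $\eta$ belongs to a minimal left ideal, condition (3) of Proposition \ref{equivalences ip minimal} is satisfied, so by the implication (3)$\Rightarrow$(2) the collection $Q$ of coordinates of $\bar q$ — which is exactly $\Image(\eta)\subseteq S_X(\C)$ — is ip-minimal.

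To produce the required morphism, recall from Lemma \ref{S_X is homogeneous} that $E(S_X(\C)) = \End(S_X(\C))$, so $\eta$ is automatically an endomorphism of $S_X(\C)$ as an infinitary definability patterns structure. Defining $g \colon S_X(\C) \to Q$ by $g(p) := \eta(p)$ (i.e., restricting the codomain of $\eta$ to its image $Q$), we obtain a morphism from $S_X(\C)$ to the ip-minimal set $Q$, which is exactly what the corollary asserts.

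There is no real obstacle here: the proof is essentially a one-line assembly of Fact \ref{Ellis theorem}, Lemma \ref{S_X is homogeneous}, and the (3)$\Rightarrow$(2) direction of Proposition \ref{equivalences ip minimal}. The only point worth being careful about is to note that the "coordinates of $\bar q$" that appear in Proposition \ref{equivalences ip minimal} form precisely the image of $\eta$ (because $\bar p$ enumerates all of $S_X(\C)$), which is what lets us reinterpret $\eta$ itself as the desired morphism $g$.
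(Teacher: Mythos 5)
Your proof is correct and follows exactly the paper's own argument: pick $\eta$ in a minimal left ideal, apply the implication $(3)\Rightarrow(2)$ of Proposition \ref{equivalences ip minimal} to see that $Q=\Image(\eta)$ is ip-minimal, and use Lemma \ref{S_X is homogeneous} to view $\eta$ itself as the morphism $g$. The paper states this more tersely, but the content is identical.
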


\begin{proof}
    Let $\overline{p}$ be an enumeration of $S_X(\C)$, $\mathcal{M}\unlhd E(S_X(\C)))$ a minimal left ideal and $\eta \in \mathcal{M}$. Then, $Q:=\eta[S_X(\C)]$ and $g:=\eta$ satisfy the requirements by Proposition \ref{equivalences ip minimal}.
\end{proof}

The next remark follows from the fact that for any element $\eta$ in a minimal left ideal $\mathcal{M}\unlhd E(S_X(\C))$ we can find an idempotent $u\in \mathcal{M}$ with $\eta \in u\mathcal{M}$, and then $\Image(u)=\Image(\eta)$ (note that all elements in $u\mathcal{M}$ have the same image).

\begin{remark}\label{Remark Q is the core}
    If the conditions of Proposition \ref{equivalences ip minimal} hold, then the ip-minimal set $Q$ of Proposition \ref{equivalences ip minimal}$(2)$ satisfies $Q\cong \overline{\mathcal{J}}$.
\end{remark}

\begin{lema}
    A subset $Q\subseteq S_X(\C)$ is ip-minimal if and only if every finite $Q_0\subseteq Q$ is ip-minimal. In particular, the union of a chain of ip-minimal subsets of $S_X(\C)$ is ip-minimal.
\end{lema}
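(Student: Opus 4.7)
The plan is to unpack the definition of ip-minimality in terms of finite tuples, exploiting homogeneity of $S_X(\C)$ to reduce the general statement to its finite restriction, and then to apply the direct direction to ''in particular'' handle chains.

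\emph{Easy direction ($Q$ ip-minimal $\Rightarrow$ every finite $Q_0 \subseteq Q$ is ip-minimal).} Suppose $Q$ is ip-minimal and let $f_0 \colon Q_0 \to S_X(\C)$ be a morphism. Then $f_0$ is a partial morphism between substructures of $S_X(\C)$, so by the homogeneity part of Lemma \ref{S_X is homogeneous} it extends to an endomorphism $\eta$ of $S_X(\C)$. The restriction $\eta\!\!\upharpoonright_Q \colon Q \to S_X(\C)$ is again a morphism, and ip-minimality of $Q$ forces it to be an isomorphism onto its image. Since both injectivity and the reflection of each relation $R_{\bar\varphi,q}$ are inherited by restriction to $Q_0$, the map $f_0 = \eta\!\!\upharpoonright_{Q_0}$ is itself an isomorphism onto its image.

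\emph{Hard direction (every finite subset is ip-minimal $\Rightarrow Q$ is ip-minimal).} Here the key observation is that both properties that make a morphism an isomorphism onto its image, namely injectivity and the reflection of each relation in the infinitary definability patterns language, are witnessed by finite tuples. Let $f \colon Q \to S_X(\C)$ be a morphism. For injectivity, given $p \neq p'$ in $Q$, apply the hypothesis to $Q_0 := \{p, p'\}$: the restriction $f\!\!\upharpoonright_{Q_0}$ is an isomorphism onto its image, so it is injective and $f(p) \neq f(p')$. For reflection of a relation $R_{\bar\varphi,q}$ on a tuple $(p_1, \dots, p_n) \in Q^n$, take $Q_0 := \{p_1, \dots, p_n\}$; then $f\!\!\upharpoonright_{Q_0}$ is an isomorphism onto its image, so if $R_{\bar\varphi, q}(f(p_1),\dots,f(p_n))$ holds then $R_{\bar\varphi, q}(p_1,\dots,p_n)$ holds. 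Combined with the fact that $f$ preserves the relations by assumption, this shows that $f$ is an isomorphism onto $\Image(f)$.

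\emph{Chains.} Given an ascending chain $(Q_i)_{i \in I}$ of ip-minimal subsets with union $Q$, any finite $Q_0 \subseteq Q$ lies inside some $Q_i$, and by the easy direction $Q_0$ is ip-minimal. Applying the hard direction to $Q$ then yields ip-minimality of $Q$.

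The main obstacle is conceptual rather than technical: one has to see clearly that a morphism between substructures of $S_X(\C)$ is, by the definition of the infinitary definability patterns structure, a map that preserves each $R_{\bar\varphi,q}$, so that ''isomorphism onto its image'' is equivalent to injectivity plus reflection of each such relation. Once this is observed, the finitary nature of these conditions, together with the homogeneity statement in Lemma \ref{S_X is homogeneous}, makes both directions essentially immediate.
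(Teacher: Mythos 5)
Your proof is correct and is exactly the argument the paper has in mind: the paper's own proof is the one-line remark that the lemma ``follows by homogeneity of $S_X(\C)$,'' and your write-up supplies precisely the missing details (homogeneity to extend a morphism off a finite subset in the easy direction, and the finitary character of injectivity and of the relations $R_{\bar\varphi,q}$ in the hard direction). No gaps.
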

\begin{proof}
    It follows by homogeneity of $S_X(\C)$ obtained in Lemma \ref{S_X is homogeneous}.
\end{proof}

    By the previous lemma and Zorn's lemma, there exists some ip-minimal $I_\C\subseteq S_X(\C)$ maximal with respect to inclusion.

\begin{lema}\label{morphism are surjective}
    Let $f:I_\C\to K$ be a morphism, where $K$ is an ip-minimal set. Then $f$ is surjective and is therefore an isomorphism. 
\end{lema}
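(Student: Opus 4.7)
The plan is to use maximality of $I_\C$ together with the ip-minimality of both $I_\C$ and $K$ to force surjectivity, exploiting homogeneity of $S_X(\C)$ as the infinitary definability patterns structure (Lemma \ref{S_X is homogeneous}). I would first note that $f$, viewed as a morphism $I_\C \to S_X(\C)$ (via the inclusion $K \hookrightarrow S_X(\C)$), is an isomorphism onto its image $L := f(I_\C) \subseteq K$, because $I_\C$ is ip-minimal. In particular $L$, being isomorphic to $I_\C$, is itself ip-minimal, and $f^{-1} \colon L \to I_\C$ is a partial morphism between substructures of $S_X(\C)$.

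Next I would apply homogeneity (Lemma \ref{S_X is homogeneous}) to extend $f^{-1}$ to an endomorphism $\tilde{g} \in \End(S_X(\C))$. Then $\tilde{g}\!\!\upharpoonright_K$ is a morphism from the ip-minimal set $K$ into $S_X(\C)$, so by ip-minimality of $K$ it is an isomorphism onto its image $M := \tilde{g}[K]$. Thus $M$ is ip-minimal as well, and since $L \subseteq K$ one has $M \supseteq \tilde{g}[L] = f^{-1}[L] = I_\C$.

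This is where maximality of $I_\C$ enters, which I expect to be the crux of the argument: since $M$ is an ip-minimal subset of $S_X(\C)$ containing $I_\C$, by the maximal choice of $I_\C$ we must have $M = I_\C$. Consequently $\tilde{g}\!\!\upharpoonright_K$ is a bijection $K \to I_\C$, and simultaneously $\tilde{g}\!\!\upharpoonright_L = f^{-1}$ is a bijection $L \to I_\C$. Because $L \subseteq K$ and $\tilde{g}$ is injective on $K$ with the same image $I_\C$ on the subset $L$, this forces $L = K$, i.e.\ $f$ is surjective. Combined with the first step, $f$ is then an isomorphism $I_\C \to K$, as desired.

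The main obstacle I anticipate is just the careful bookkeeping: one must be attentive to the fact that ip-minimality is defined via morphisms \emph{into} $S_X(\C)$, so every appeal to it requires presenting the relevant map as such, and one must verify that ip-minimality is preserved under isomorphism (which is immediate from the definition, since an isomorphism transports morphisms out of the source bijectively to morphisms out of the target). Once these points are in place, the argument is driven entirely by the interplay between homogeneity and the maximality of $I_\C$.
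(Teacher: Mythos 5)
Your proof is correct and follows essentially the same route as the paper's: identify $f[I_\C]$ as an isomorphic (hence ip-minimal) copy of $I_\C$, extend the inverse to an endomorphism of $S_X(\C)$ by homogeneity, restrict it to $K$ to get an ip-minimal image containing $I_\C$, invoke maximality of $I_\C$ to conclude that image equals $I_\C$, and then use injectivity to force $f[I_\C]=K$. The only difference is cosmetic: you spell out explicitly that $\tilde{g}[K]\supseteq I_\C$ before applying maximality, a step the paper leaves implicit.
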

\begin{proof}
    Let $I':=f[I_\C]\subseteq K$. By ip-minimality of $I_\C$, the map $f:I_\C\to I'$ is an isomorphism (in the infinitary definability patterns language). Let $g:I'\to I_\C$ be the inverse of $f$. By Lemma \ref{S_X is homogeneous}, there exists $\overline{g}\in \End(S_X(\C))$ extending $g$. Let $K':=\overline{g}[K]$. Since $K$ is ip-minimal, $\overline{g}\!\!\upharpoonright_K: K\to K'$ is an isomorphism and $K'$ is also ip-minimal. So $K'=I_\C$ by maximality of $I_\C$. Therefore, by injectivity of $\overline{g}\!\!\upharpoonright_K$, $I'=K$. Thus, $f:I_\C\to K$ is onto.
\end{proof}

\begin{cor}\label{unique ip-minimal}
    There is a unique (up to isomorphism) ip-minimal subset $K\subseteq S_X(\C)$ with a morphism $S_X(\C)\to K$. It is the ip-minimal set $I_\C$ described above. Moreover, there is a retraction $S_X(\C)\to I_\C$. 
\end{cor}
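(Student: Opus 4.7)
The plan is to first produce a morphism $S_X(\C)\to I_\C$, then exploit Lemma~\ref{morphism are surjective} to obtain both uniqueness and a short argument for upgrading an arbitrary morphism into a retraction.

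For the existence of some morphism $S_X(\C)\to I_\C$, I would start from Corollary~\ref{ip-minimal exists}, which provides an ip-minimal subset $Q\subseteq S_X(\C)$ together with a morphism $g\colon S_X(\C)\to Q$. Restrict $g$ to $I_\C$: since $Q$ is ip-minimal, Lemma~\ref{morphism are surjective} forces $g\!\upharpoonright_{I_\C}\colon I_\C\to Q$ to be an isomorphism. Let $h\colon Q\to I_\C$ be its inverse and, using the homogeneity part of Lemma~\ref{S_X is homogeneous}, extend $h$ to some $\bar h\in\End(S_X(\C))$. Then $f:=\bar h\circ g\colon S_X(\C)\to I_\C$ is a morphism into $I_\C$.

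For uniqueness, suppose $K\subseteq S_X(\C)$ is ip-minimal and $g'\colon S_X(\C)\to K$ is a morphism. Consider $g'\!\upharpoonright_{I_\C}\colon I_\C\to K$. Again Lemma~\ref{morphism are surjective} applies (since $K$ is ip-minimal), yielding an isomorphism $I_\C\cong K$. In particular, every ip-minimal image of $S_X(\C)$ is isomorphic to $I_\C$.

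To produce the retraction, restrict the morphism $f$ constructed above to $I_\C$: the map $\sigma:=f\!\upharpoonright_{I_\C}\colon I_\C\to I_\C$ is a morphism into the ip-minimal set $I_\C$, hence an automorphism by Lemma~\ref{morphism are surjective}. Use Lemma~\ref{S_X is homogeneous} once more to extend $\sigma^{-1}\in\aut(I_\C)$ to some $\tau\in\End(S_X(\C))$. Then $r:=\tau\circ f\colon S_X(\C)\to I_\C$ has image contained in $\tau[I_\C]=\sigma^{-1}[I_\C]=I_\C$, and for any $p\in I_\C$ we have $r(p)=\tau(\sigma(p))=\sigma^{-1}(\sigma(p))=p$, so $r$ is a retraction onto $I_\C$. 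The main obstacle is essentially bookkeeping: one must resist the temptation to expect $f$ itself to be a retraction (it need not fix $I_\C$ pointwise) and notice that a single composition with an extension of $\sigma^{-1}$ fixes this automatically.
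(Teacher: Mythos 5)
Your proof is correct and follows essentially the same route as the paper: uniqueness via Lemma~\ref{morphism are surjective} applied to $g\!\upharpoonright_{I_\C}$, and the retraction obtained by post-composing a morphism $S_X(\C)\to I_\C$ with the inverse of its restriction to $I_\C$. The only (harmless) difference is that you extend that inverse to a global endomorphism via homogeneity, whereas the paper composes with $(g\!\upharpoonright_{I_\C})^{-1}\colon I_\C\to I_\C$ directly.
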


\begin{proof}
    Let $g: S_X(\C)\to K$ be a morphism and $K$ an ip-minimal subset (which exists by Corollary \ref{ip-minimal exists}). By Lemma \ref{morphism are surjective}, the map $g\!\!\upharpoonright_{I_\C}:I_\C\to K$ is an isomorphism, which proves uniqueness. 

    For the moreover part, take a morphism $g: S_X(\C)\to I_\C$ (which exists by  the first part). Then $g\!\!\upharpoonright_{I_\C}: I_\C \to I_\C$ is an isomorphism, and so  $f:=(g\!\!\upharpoonright_{I_\C})^{-1}\circ g$ is a retraction from $S_X(\C)$ to $I_\C$. 
\end{proof}

By Lemma \ref{S_X is homogeneous}, Corollary \ref{unique ip-minimal}, and Remark \ref{Remark Q is the core}, we can assume that $I_\C=\overline{\mathcal{J}}$.

\begin{lema}\label{there is a section}
    Let $\C'\succ \C$. Then the restriction $$r:S_X(\C')\to S_X(\C)$$ is a morphism  of infinitary definability patterns structures and has a section $$s:S_X(\C)\to S_X(\C')$$ which is a morphism of infinitary definability patterns structures.
\end{lema}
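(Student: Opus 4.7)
The plan is to verify the two claims separately. First, the restriction map $r$ is a morphism almost by definition: if $\bar{p}'=(p_1',\dots,p_n')\in S_X(\C')^n$ and $b$ witnesses $(\bar\varphi,q)\in c(r(\bar{p}'))$, then $b\in\C$, $b\models q$, and $\varphi_i(x,b)\in r(p_i')=p_i'\cap\mL(\C)\subseteq p_i'$ for each $i$, so the same $b\in\C\subseteq\C'$ witnesses $(\bar\varphi,q)\in c(\bar{p}')$. Hence $c(r(\bar{p}'))\subseteq c(\bar{p}')$, which is exactly the morphism condition (recall that $R_{\bar\varphi,q}(\bar p)$ says $(\bar\varphi,q)\notin c(\bar p)$).

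For the section, the plan is to employ a ``global heir'' construction. Enumerate $S_X(\C)=\{p_\alpha:\alpha<\mu\}$ and introduce pairwise disjoint copies $x_\alpha$ of the variable tuple $x$. The partial type $\bigcup_\alpha p_\alpha(x_\alpha)$ is consistent (its variable blocks are disjoint), so it extends to a complete type $P^*((x_\alpha)_\alpha)\in S(\C)$. Since $\C$ is $\aleph_0$-saturated, Fact \ref{fact: existence of heirs} (applied inside a common monster containing $\C'$) yields a strong heir extension $P^{**}\in S(\C')$ of $P^*$ over $\C$. Define $s(p_\alpha)\in S_X(\C')$ to be the restriction of $P^{**}$ to the variables $x_\alpha$; this is a complete type over $\C'$ extending $p_\alpha$, so $r(s(p))=p$ for every $p\in S_X(\C)$.

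It remains to show that $s$ preserves the structure. Assume $\bar{p}=(p_{\alpha_1},\dots,p_{\alpha_n})$ and $b\in\C'$ with $b\models q$ and $\varphi_i(x,b)\in s(p_{\alpha_i})$ for each $i$. Form the \emph{single} first-order formula $\psi((x_{\alpha_i})_{i=1}^n, y):=\bigwedge_{i=1}^n \varphi_i(x_{\alpha_i},y)$; it involves only finitely many of the $x$-variables and a finite $y$, so it is a legitimate $\mL$-formula. By construction, $\psi((x_{\alpha_i})_i,b)\in P^{**}$. Applying the strong heir property of $P^{**}$ to $\psi$ with $m=\emptyset$, we obtain $b'\in\C$ with $b'\equiv_\emptyset b$ (so $b'\models q$) and $\psi((x_{\alpha_i})_i,b')\in P^{**}$; hence $\varphi_i(x,b')\in s(p_{\alpha_i})\cap\mL(\C)=p_{\alpha_i}$ for every $i$. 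Thus $(\bar\varphi,q)\in c(\bar{p})$, as required. The only mildly delicate point is the passage from heirs of individual types to a \emph{simultaneous} witness in $\C$; the product-type construction together with the strong heir property applied to the single conjunction $\psi$ sidesteps this obstacle cleanly.
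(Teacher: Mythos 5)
Your proposal is correct and follows essentially the same route as the paper: both arguments package all of $S_X(\C)$ into a single complete type in disjoint copies of the variables (the paper realizes each $p_i$ by some $a_i$ and takes $\tp((a_i)_{i<\mu}/\C)$, which is the same thing), take a strong heir extension over $\C$ to $\C'$ via Fact \ref{fact: existence of heirs}, and define $s$ by restricting to each variable block. Your explicit verification of content preservation via the conjunction $\psi$ and the strong heir property with $m=\emptyset$ is exactly the computation the paper leaves implicit in the assertion $c(p_{i_1},\dots,p_{i_n})=c(p'_{i_1},\dots,p'_{i_n})$.
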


\begin{proof}
The fact that $r$ is a morphism is trivial.  To construct $s$, let $\overline{p}=(p_i)_{i<\mu}$ be an enumeration of $S_X(\C)$ and $(a_i)_{i<\mu}$ be a realization of $\overline{p}$ (i.e., $a_i \models p_i$ for $i<\mu$). Let $p:=\tp((a_i)_{i<\mu}/\C)$ and let a type $p':=\tp ((a'_i)_{i<\mu}/\C')$ be a strong heir extension of $p$ (it exists by Fact \ref{fact: existence of heirs}). Then, for any $n<\omega$ and $i_1,\dots i_n<\mu$ we have $c(p_{i_1},\dots,p_{i_n})=c(p'_{i_1},\dots,p'_{i_n})$, where $p_i'=\tp (a'_i/\C')$. Hence, the map $s: S_X(\C)\to S_X(\C')$ given by $s(p_i):=\tp(a_i'/\C')$ is a morphism between the infinitary definability patterns structures, and it is clearly a section of $r$.
\end{proof}

\begin{teor}\label{absoluteness of the core}
    Up to isomorphism in the infinitary definability patterns language, $\overline{\mathcal{J}}$ does not depend on the choice of the $\aleph_0$-saturated, strongly $\aleph_0$-homogeneous model $\C$ for which it is computed.
\end{teor}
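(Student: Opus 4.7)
The plan is to show that for $\C \prec \C'$, both $\aleph_0$-saturated and strongly $\aleph_0$-homogeneous, the canonical maximal ip-minimal subsets $I_\C \subseteq S_X(\C)$ and $I_{\C'} \subseteq S_X(\C')$ are isomorphic as infinitary definability patterns structures; the general case follows by taking a common elementary extension of any two such models, invoking this special case twice, and then using that $\overline{\mathcal{J}} \cong I_\C$ (by Remark \ref{Remark Q is the core} and the identification $I_\C = \overline{\mathcal{J}}$ established after Corollary \ref{unique ip-minimal}).

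So assume $\C \prec \C'$. By Lemma \ref{there is a section}, we have a morphism $r\colon S_X(\C')\to S_X(\C)$ and a morphism $s\colon S_X(\C)\to S_X(\C')$ with $r\circ s = \mathrm{id}_{S_X(\C)}$, and moreover the specific $s$ from the lemma satisfies $c(s(\bar p)) = c(\bar p)$ for every finite tuple $\bar p$ from $S_X(\C)$ (i.e., $s$ is an isomorphism onto its image). First I would put $Q := s[I_\C] \subseteq S_X(\C')$ and observe that $s\!\!\upharpoonright_{I_\C}\colon I_\C \to Q$ is a bijection (with inverse $r\!\!\upharpoonright_Q$) which is an isomorphism of infinitary definability patterns structures, because $r$ is a morphism and $s$ preserves content. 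Hence $Q\cong I_\C$.

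Next I would verify that $Q$ is ip-minimal in $S_X(\C')$. Let $f\colon Q \to S_X(\C')$ be any morphism and consider the composite $g := r\circ f\circ (s\!\!\upharpoonright_{I_\C})\colon I_\C \to S_X(\C)$, which is a morphism. By ip-minimality of $I_\C$ (Corollary \ref{unique ip-minimal}), $g$ is an isomorphism onto its image, i.e.\ $c(g(\bar p))=c(\bar p)$ for every tuple $\bar p$ from $I_\C$. Writing $\bar q = s(\bar p)\in Q$, this yields $c(r(f(\bar q)))=c(\bar q)$. On the other hand, since $r$ and $f$ are both morphisms, we have $c(r(f(\bar q)))\subseteq c(f(\bar q))\subseteq c(\bar q)$, forcing equalities throughout. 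Therefore $f$ preserves content, i.e.\ is an isomorphism onto its image, proving ip-minimality of $Q$. To apply Corollary \ref{unique ip-minimal} and conclude $Q \cong I_{\C'}$, I still need a morphism $S_X(\C')\to Q$: taking a retraction $h\colon S_X(\C)\to I_\C$ (which exists by the moreover part of Corollary \ref{unique ip-minimal}), the composite $s\circ h\circ r\colon S_X(\C')\to Q$ works.

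Putting it together gives $I_{\C'} \cong Q \cong I_\C$, as required. The main obstacle I expect is the last two verifications: ensuring that $Q$ really is ip-minimal inside $S_X(\C')$ (the content-preservation sandwich argument is essential here, and depends crucially on $s$ being built from strong heirs, not an arbitrary section of $r$), and correctly producing a morphism $S_X(\C')\to Q$ so that the uniqueness part of Corollary \ref{unique ip-minimal} applies. Everything else is a straightforward combination of the earlier lemmas in this section.
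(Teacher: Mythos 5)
Your proof is correct, and it uses the same ingredients as the paper (the section $s$ built from a strong heir, the retractions from Corollary \ref{unique ip-minimal}, and Lemma \ref{morphism are surjective}), but assembles them differently. The paper's proof avoids ever showing that $s[I_\C]$ is ip-minimal in $S_X(\C')$: it forms the two morphisms $h_1:=f_\C\circ(r\!\!\upharpoonright_{I_{\C'}})\colon I_{\C'}\to I_\C$ and $h_2:=f_{\C'}\circ(s\!\!\upharpoonright_{I_\C})\colon I_\C\to I_{\C'}$, notes that both composites $h_1\circ h_2$ and $h_2\circ h_1$ are isomorphisms by Lemma \ref{morphism are surjective}, and concludes immediately that $h_1$ is an isomorphism. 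Your route instead transports $I_\C$ into $S_X(\C')$ and invokes the uniqueness clause of Corollary \ref{unique ip-minimal}; this forces you to prove that $Q=s[I_\C]$ is ip-minimal \emph{in the new ambient structure}, which does not follow formally from $Q\cong I_\C$ (ip-minimality is relative to the ambient $S_X(\C')$), and you correctly supply the missing step via the content sandwich $c(\bar q)=c(r(f(\bar q)))\subseteq c(f(\bar q))\subseteq c(\bar q)$ resting on the strong-heir property of $s$. The trade-off: the paper's argument is shorter and more formal/categorical, while yours yields as a byproduct the explicit facts that $s[I_\C]$ is ip-minimal and isomorphic to $I_{\C'}$ — facts the paper later wants anyway in the proof of Theorem \ref{compatible implies absolute}, where it re-derives them \emph{from} Theorem \ref{absoluteness of the core}. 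One small point to make fully explicit: "content-preserving implies isomorphism onto the image" requires the observation (via Fact \ref{content and ellis semigroup} and compactness of the Ellis semigroup, as in the proof of Proposition \ref{equivalences ip minimal}) that a global content-preserving map admits a left inverse in $E(S_X(\C'))$, which in particular gives injectivity.
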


\begin{proof}
    It is enough to show that for $\aleph_0$-saturated, strongly $\aleph_0$-homogeneous models  $\C\prec \C'$, $I_\C\cong I_{\C'}$, where $I_{\C'}$ is defined for $\C'$ the same way as $I_\C$ was defined for $\C$. We have the following maps:
    \begin{itemize}
        \item The restriction morphism $r: S_X(\C') \to S_X(\C)$;
        \item A morphism $s:S_X(\C)\to S_X(\C')$ which is a section of $r$, given by Lemma \ref{there is a section};
        \item A retraction $f_\C: S_X(\C)\to I_\C$ given by Corollary \ref{unique ip-minimal};
        \item A retraction $f_{\C'}: S_X(\C')\to I_{\C'}$ given by Corollary \ref{unique ip-minimal}.
    \end{itemize}
    Then, the maps $h_1:=f_\C\circ ( r\!\!\upharpoonright_{I_{\C'}} ): I_{\C'}\to I_\C$ and $h_2:=f_{\C'}\circ ( s\!\!\upharpoonright_{I_\C} ): I_{\C}\to I_{\C'}$ are morphisms. Hence, the maps $h_2\circ h_1:I_{\C'}\to I_{\C'} $ and $h_1 \circ h_2: I_{\C}\to I_{\C}$ are isomorphisms by Lemma \ref{morphism are surjective}.  The first thing implies that $h_1$ is an isomorphism onto its image and the second one that $\Image(h_1)=I_\C$. Therefore, $h_1$ is an isomorphism.
\end{proof}

One can show that $\overline{\mathcal{J}}$ is precisely Hrushovski's infinitary core (but localized to $X$) considered in \cite[Appendix A]{hrushovski2022definability}; however, we will not use that approach in this paper.

The next corollary was originally proved in \cite{Krupiski2017BoundednessAA} by a much longer argument (also based on contents).

    \begin{cor}\label{absoluteness of the ellis group}
        The Ellis group (considered as a semitopological group with the $\tau$-topology) of the flow $(\aut(\C),S_X(\C))$ does not depend on the choice of $\C$ as long as $\C$ is $\aleph_0$-saturated and strongly $\aleph_0$-homogeneous.
    \end{cor}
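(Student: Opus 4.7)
The plan is to simply chain together the results that have already been established in this section. The bridge between the Ellis group and a model-theoretically intrinsic object is provided by Propositions \ref{group isomorphism delta} and \ref{delta is homeomorphism}, which together identify the Ellis group $u\mathcal{M}$ with the $\tau$-topology with the group $\aut(\overline{\mathcal{J}})$ equipped with the ipp-topology. The absoluteness statement for the Ellis group should then reduce to an absoluteness statement for $\aut(\overline{\mathcal{J}})$ (with its ipp-topology), which in turn should reduce to an absoluteness statement for $\overline{\mathcal{J}}$ as an infinitary definability patterns structure.

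First, I would fix two $\aleph_0$-saturated, strongly $\aleph_0$-homogeneous models $\C$ and $\C'$ and the corresponding objects $\overline{\mathcal{J}}_{\C} \subseteq S_X(\C)$ and $\overline{\mathcal{J}}_{\C'} \subseteq S_X(\C')$, produced by picking minimal left ideals and idempotents in $E(S_X(\C))$ and $E(S_X(\C'))$ respectively. By Corollary \ref{unique ip-minimal} (applied on each side) and Remark \ref{Remark Q is the core}, these coincide (up to isomorphism in the infinitary definability patterns language) with the maximal ip-minimal subsets $I_\C$ and $I_{\C'}$, so Theorem \ref{absoluteness of the core} yields an isomorphism $h\colon \overline{\mathcal{J}}_{\C} \to \overline{\mathcal{J}}_{\C'}$ of infinitary definability patterns structures.

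Next, any such $h$ induces a group isomorphism $h_*\colon \aut(\overline{\mathcal{J}}_{\C}) \to \aut(\overline{\mathcal{J}}_{\C'})$ given by $h_*(f) := h \circ f \circ h^{-1}$. The ipp-topology on $\aut(\overline{\mathcal{J}})$ is defined (in Definition \ref{definition: ipp-topology}) purely in terms of the relations $R_{\bar{\varphi},r}$ and the elements of $\overline{\mathcal{J}}$ themselves, i.e., purely in terms of the infinitary definability patterns structure on $\overline{\mathcal{J}}$; hence $h_*$ sends subbasic closed sets to subbasic closed sets and is a homeomorphism in the ipp-topologies.

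Finally, I would combine the Ellis-theoretic identifications on both sides. Pick minimal left ideals $\mathcal{M}\unlhd E(S_X(\C))$ and $\mathcal{M}'\unlhd E(S_X(\C'))$ and idempotents $u\in \mathcal{M}$, $u'\in \mathcal{M}'$ such that $\overline{\mathcal{J}}_{\C}=\Image(u)$ and $\overline{\mathcal{J}}_{\C'}=\Image(u')$. By Proposition \ref{group isomorphism delta} and Proposition \ref{delta is homeomorphism}, the maps $\delta\colon u\mathcal{M} \to \aut(\overline{\mathcal{J}}_{\C})$ and $\delta'\colon u'\mathcal{M}' \to \aut(\overline{\mathcal{J}}_{\C'})$ are isomorphisms of groups and homeomorphisms in the $\tau$- and ipp-topologies. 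Composing, $(\delta')^{-1}\circ h_* \circ \delta\colon u\mathcal{M} \to u'\mathcal{M}'$ is a group isomorphism and a homeomorphism in the $\tau$-topologies, as required. There is no real obstacle here: the main content of the argument is already in Theorem \ref{absoluteness of the core} and Proposition \ref{delta is homeomorphism}; the only thing to verify explicitly is that the ipp-topology transports along an isomorphism of ipds structures, which is immediate from Definition \ref{definition: ipp-topology}.
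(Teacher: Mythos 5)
Your proposal is correct and is exactly the paper's argument: the paper's proof of this corollary is a one-line citation of Propositions \ref{group isomorphism delta} and \ref{delta is homeomorphism}, the definition of the ipp-topology, and Theorem \ref{absoluteness of the core}, and your write-up simply spells out the details of how those pieces chain together (including the routine observation that an isomorphism of infinitary definability patterns structures transports the ipp-topology).
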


    \begin{proof}
    It follows from Propositions \ref{group isomorphism delta}, \ref{delta is homeomorphism}, the definition of ipp-topology, and Theorem \ref{absoluteness of the core}.
\end{proof}

By the above corollary applied for $X:=\C^\omega$ (or, in the multisorted situation, for $X$ being the product of all sorts such that each sort is repeated $\aleph_0$-times), the Ellis group of the flow $S_x(\C)$ does not depend on the choice of $\C$ and is called the {\em Ellis group of the theory $T$}.

\section{Ellis groups of compatible quotients are isomorphic}\label{section: compatible quotients}

We introduce a natural condition (which we call compatibility) on closed, invariant equivalence relations $F$ on $S_X(\C)$ and $F'$ on $S_X(\C')$, guaranteeing that the Ellis groups of the quotient flows $(\aut(\C),S_X(\C)/F)$ and $(\aut(\C'),S_X(\C')/F)$ are isomorphic  as long as $\C\prec \C'$ are $\aleph_0$-saturated and strongly $\aleph_0$-homogeneous. Hence, in this section, we will assume that $\C \prec \C'$ satisfy only those saturation assumptions. As usual, $X$ is a $\emptyset$-type-definable set.

\begin{defin}
    Let $F'$ be a closed, $\aut(\C')$-invariant equivalence relation defined on $S_X(\C')$, and $F$ a closed, $\aut(\C)$-invariant equivalence relation defined on $S_X(\C)$. We say that $F'$ and $F$ are {\em compatible} if $r[F']=F$ (i.e., $\{ (r(a),r(b)): (a,b)\in F' \}=F)$, where $r: S_X(\C')\to S_X(\C)$ is the restriction map.
\end{defin}

\begin{teor}\label{compatible implies absolute}
    If $F'$ and $F$ are compatible equivalence relations respectively on $S_X(\C')$ and $S_X(\C)$, then the Ellis group of the flow $(\aut(\C'),S_X(\C')/F')$ is topologically isomorphic to the Ellis group of the flow $(\aut(\C),S_X(\C)/F)$.
\end{teor}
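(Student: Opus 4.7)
My plan is to reduce the theorem, via the infinitary definability patterns machinery from Section \ref{section: idp}, to a statement about the ip-minimal cores $I_\C$ and $I_{\C'}$.

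Applying Fact \ref{epimorphism semigroup to group} to the quotient epimorphism $\pi\colon S_X(\C) \to S_X(\C)/F$, the Ellis group of $(\aut(\C), S_X(\C)/F)$ is a $\tau$-topological quotient of $u\mathcal{M}$ by the closed normal subgroup $N := \{\eta \in u\mathcal{M} \mid \eta(p)\,F\,u(p)\text{ for all }p \in S_X(\C)\}$. Using $\eta = \eta u$ for $\eta \in u\mathcal{M}$, $u\!\!\upharpoonright_{I_\C} = \mathrm{id}$, and the topological group isomorphism $\delta\colon u\mathcal{M} \to \aut(I_\C)$ of Propositions \ref{group isomorphism delta} and \ref{delta is homeomorphism}, the subgroup $N$ corresponds to $N_\C := \{\sigma \in \aut(I_\C) \mid \sigma(q)\,F\,q\text{ for all }q \in I_\C\}$. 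Analogously, the Ellis group of $(\aut(\C'), S_X(\C')/F')$ identifies with $\aut(I_{\C'})/N_{\C'}$.

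Theorem \ref{absoluteness of the core} supplies an explicit isomorphism $h := f_\C \circ r\!\!\upharpoonright_{I_{\C'}}\colon I_{\C'} \to I_\C$ of infinitary definability patterns structures, where $r$ is restriction and $f_\C$ is the retraction from Corollary \ref{unique ip-minimal}. Conjugation by $h$ yields the topological group isomorphism $\aut(I_{\C'}) \to \aut(I_\C)$ underlying Corollary \ref{absoluteness of the ellis group}, so it suffices to show this conjugation sends $N_{\C'}$ onto $N_\C$. Unpacking the definitions, this reduces to the key lemma: for all $p', q' \in I_{\C'}$,
\[p'\,F'\,q' \iff h(p')\,F\,h(q').\]

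The forward direction is straightforward: if $p'\,F'\,q'$, then compatibility $r[F']=F$ gives $r(p')\,F\,r(q')$, and since $f_\C \in E(S_X(\C))$ is a pointwise limit of $\aut(\C)$ acting on the closed $\aut(\C)$-invariant relation $F$, we obtain $h(p')\,F\,h(q')$. The reverse direction is the main obstacle. My approach is to introduce the companion morphism $h_2 := f_{\C'} \circ s\!\!\upharpoonright_{I_\C}\colon I_\C \to I_{\C'}$ (with $s$ the section from Lemma \ref{there is a section}), whose composition $h_2 \circ h$ is an automorphism of $I_{\C'}$ by the proof of Theorem \ref{absoluteness of the core}. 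Since $g := s \circ f_\C \circ r$ is a morphism of infinitary definability patterns structures on $S_X(\C')$, it lies in $E(S_X(\C'))$ by Lemma \ref{S_X is homogeneous}; and $f_{\C'} \circ g\!\!\upharpoonright_{I_{\C'}} = h_2 \circ h$. Since both $h_2 \circ h$ and its inverse extend to elements of $E(S_X(\C'))$, and any such element preserves the closed $\aut(\C')$-invariant relation $F'$, the automorphism $h_2 \circ h$ preserves $F'\!\!\upharpoonright_{I_{\C'}}$ in both directions. Consequently, the reverse direction would follow from the symmetric transport claim that $h_2$ sends $F$-related elements of $I_\C$ to $F'$-related elements of $I_{\C'}$. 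Establishing this symmetric transport is the technical heart of the proof, and the hard part will be to leverage the backward inclusion $F \subseteq r[F']$ of compatibility together with the strong-heir construction of $s$ and the $\aut(\C')$-invariance of $F'$, in order to promote a given $F$-pair $(p, q) \in I_\C^2$ to an $F'$-pair at the level of $(f_{\C'}(s(p)), f_{\C'}(s(q)))$.
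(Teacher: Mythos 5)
Your reduction is essentially the paper's own: you pass through $\delta$ (Propositions \ref{group isomorphism delta} and \ref{delta is homeomorphism}) to identify each Ellis group of a quotient flow with $\aut(\text{core})$ modulo the normal subgroup of automorphisms moving every point within its $F$- (resp.\ $F'$-) class, and you then try to match the two normal subgroups under the core isomorphism coming from Theorem \ref{absoluteness of the core}. The forward direction of your key lemma is correct, and your observation that elements of $E(S_X(\C'))$ preserve the closed, $\aut(\C')$-invariant relation $F'$ (so that $h_2\circ h$ and its inverse transport $F'$-pairs to $F'$-pairs) is also sound.

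However, the proof is not complete: the reverse direction of your key lemma rests on the ``symmetric transport claim'' that an $F$-pair $(p,q)\in I_\C^2$ yields an $F'$-pair $(f_{\C'}(s(p)),f_{\C'}(s(q)))$, and you explicitly leave this unestablished (``the hard part will be to leverage\dots''). This is precisely the technical heart of the theorem, and it cannot be waved through: the backward inclusion $F\subseteq r[F']$ of compatibility only gives you \emph{some} pair $(s_1,s_2)$ with $s_1\,F'\,s_2$, $r(s_1)=p$, $r(s_2)=q$, and there is no a priori reason why $(s(p),s(q))$ should be $F'$-related to, or even in the same $E(S_X(\C'))$-orbit as, $(s_1,s_2)$. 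The missing argument is a content computation: since $s$ comes from a strong heir extension, $c(s(p),s(q))=c(p,q)=c(r(s_1),r(s_2))\subseteq c(s_1,s_2)$, so by Fact \ref{content and ellis semigroup} there exists $\eta\in E(S_X(\C'))$ with $\eta(s_1,s_2)=(s(p),s(q))$; as $\eta$ is a pointwise limit of automorphisms and $F'$ is closed and $\aut(\C')$-invariant, this forces $s(p)\,F'\,s(q)$, and applying $f_{\C'}\in E(S_X(\C'))$ finishes the transport. This is exactly the Claim in the paper's proof (stated there for the section $s$ built from an enumeration of $S_X(\C)$ and the idempotent $u'$ with $\Image(u')=s[I_\C]$). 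Until you supply this step, the proof has a genuine gap. A second, more minor, point: you identify the Ellis group of $(\aut(\C'),S_X(\C')/F')$ with $\aut(I_{\C'})/N_{\C'}$, whereas the quotient construction naturally produces $\aut(\Image(u'))$ modulo the analogous subgroup for a specific idempotent $u'$; transferring between $\Image(u')$ and $I_{\C'}$ requires the same ``$E(S_X(\C'))$ preserves $F'$ in both directions'' argument you use for $h_2\circ h$, so it is harmless but should be said. Likewise the topological part (that the quotient maps are $\tau$-quotient maps and the conjugation isomorphism is a homeomorphism for the ipp-topology) is only asserted, though it does follow from Fact \ref{epimorphism semigroup to group}, Proposition \ref{delta is homeomorphism}, and the definition of the ipp-topology as in the paper.
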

\begin{proof}
   Let $(p_i)_{i<\mu}$ be an enumeration of $S_X(\C)$ and $(a_i)_{i<\mu}$ be a sequence of realizations. Consider the type $\mathfrak{p}:=\tp((a_i)_{i<\mu}/\C)$, and let $\mathfrak{p}':=\tp ((a'_i)_{i<\mu}/\C')$ be a strong heir extension of $\mathfrak{p}$.  For each $i<\mu$ we denote $\tp(a_i'/\C')$ by $p_i'$.

    Let $s\colon S_X(\C)\to S_X(\C')$ be the function given by $s(p_i):=p_i'$. The function $s$ is a section of the restriction map $r$, and since $\mathfrak{p}'$ is a strong heir extension of $\mathfrak{p}$, $s$ is an isomorphism to its image in the infinitary definability  patterns language.

    Choose and idempotent $u \in \mathcal{M}$, where $\mathcal{M}$ is a minimal left ideal of $E(S_X(\C))$. Then $I_{\C}:=\Image(u)$ is ip-minimal by Proposition \ref{equivalences ip minimal}. By Proposition \ref{group isomorphism delta}, there is an isomorphism $\delta$ from $u\mathcal{M}$ to $\aut(I_{\C})$. 
By the previous paragraph,  $s \!\! \upharpoonright_{I_{\C}} \colon I_{\C} \to s[I_{\C}]$ is an isomorphism. 
    On the other hand, by Corollary \ref{unique ip-minimal}, let $I_{\C'}$ be the unique up to isomorphism ip-minimal subset of $S_X(\C')$ for which there is a morphism from $S_X(\C')$ to $I_{\C'}$. By Theorem \ref{absoluteness of the core}, we have $I_{\C'} \cong I_{\C}$, and therefore $I_{\C'} \cong s[I_{\C}]$ (in particular, $s[I_{\C}]$ is ip-minimal in $S_X(\C')$).  
 Thus,  by Lemma \ref{morphism are surjective}, the morphism $\eta:=s \circ u \circ r \colon S_X(\C') \to s[I_{\C}]$ is surjective, and, by Lemma \ref{S_X is homogeneous}, $\eta \in E(S_X(\C'))$. Using Proposition \ref{equivalences ip minimal}, we conclude that $\eta$ is in some minimal left ideal $\mathcal{M'}$ of $E(S_X(\C'))$. Finally, taking an idempotent $u' \in \eta\mathcal{M}'$, we get $\Image(u')=\Image(\eta)= s[I_{\C}]$.
    

    The quotient map $\pi\colon S_X(\C')\to S_X(\C')/F' $ induces the following commutative diagram of functions:

      \begin{minipage}[c]{0.9\textwidth}
              \hfill
        \begin{tikzpicture}\label{diagram pi}
        \matrix (m) [matrix of math nodes,row sep=3em,column sep=4em,minimum width=2em]
  {
     \Tilde{\pi}:u'\mathcal{M}' & \Tilde{\pi}(u')\Tilde{\pi}(\mathcal{M}') \\
     \Tilde{\Tilde{\pi}}:\aut(s[I_\C]) & \Tilde{\pi}(u')\Tilde{\pi}(\mathcal{M}')\!\!\upharpoonright_{\pi[s[I_\C]]} \subseteq \Sym(\pi[s[I_\C]]) \\};
  \path[-stealth]
    (m-1-1) edge [->>] node [left] {$\cong$} (m-2-1)
            edge [->>] 
            (m-1-2)
    (m-2-1.east|-m-2-2) edge [->>] 
            (m-2-2)
    (m-1-2) edge [->>]  node [right] {$\cong$} (m-2-2);
    \end{tikzpicture}
  \end{minipage}
  \begin{minipage}[c]{0.1\textwidth}
    \hfill (1) 
  \end{minipage}

    where:
    \begin{itemize}
        \item $\aut(s[I_\C])$ is the group of automorphisms in the infinitary definability patterns language.
        \item The map $\Tilde{\pi}$ is given by Fact \ref{epimorphism semigroup to group}.
        \item The isomorphism on the left is given by Proposition \ref{group isomorphism delta} and the fact that $s[I_\C]=\Image(u')$.
        \item The isomorphism on the right is given by Fact \ref{restrictio to image is monomorphism}.
        \item The map $\Tilde{\Tilde{\pi}}\colon\aut(s[I_\C]) \to \Sym(\pi[s[I_\C]])$ is given by $\Tilde{\Tilde{\pi}}(\sigma)(\pi(x)):=\pi(\sigma(x))$ (which is the composition of the inverse of the left side isomorphism, the map $\tilde{\pi}$ and the right side isomorphism).
    \end{itemize}

    Similarly, the quotient map $\rho\colon S_X(\C)\to S_X(\C)/F $ induces the following commutative diagram of functions:
    
      \begin{minipage}[c]{0.8\textwidth}
              \hfill
        \begin{tikzpicture} \label{diagram rho}
        \matrix (m) [matrix of math nodes,row sep=3em,column sep=4em,minimum width=2em]
  {
     \Tilde{\rho}:u\mathcal{M} & \Tilde{\rho}(u)\Tilde{\rho}(\mathcal{M}) \\
     \Tilde{\Tilde{\rho}}:\aut(I_\C) & \Tilde{\rho}(u)\Tilde{\rho}(\mathcal{M})\!\!\upharpoonright_{\rho[I_\C]}\subseteq \Sym(\rho[I_\C]) \\};
  \path[-stealth]
    (m-1-1) edge [->>] node [left] {$\cong$} (m-2-1)
            edge [->>] 
            (m-1-2)
    (m-2-1.east|-m-2-2) edge [->>] 
            (m-2-2)
    (m-1-2) edge [->>]  node [right] {$\cong$} (m-2-2);
    \end{tikzpicture}
  \end{minipage}
  \begin{minipage}[c]{0.2\textwidth}
    \hfill (2) 
  \end{minipage}
  
 where:
 \begin{itemize}
     \item  $\aut(I_\C)$ is the group of automorphisms in the infinitary definability patterns language.
     \item The map $\Tilde{\rho}$ is given by Fact \ref{epimorphism semigroup to group}.
    \item The isomorphism on the left is given by Proposition \ref{group isomorphism delta}.
    \item The isomorphism on the right is given by Fact \ref{restrictio to image is monomorphism}.
    \item The map $\Tilde{\Tilde{\rho}}\colon\aut(I_\C) \to \Sym(\rho[I_\C])$ is given by $\Tilde{\Tilde{\rho}}(\sigma)(\rho(x)):=\rho(\sigma(x))$ (which is the composition of the inverse of the left side isomorphism, the map $\tilde{\rho}$ and the right side isomorphism).
 \end{itemize}

 Since the map $r\colon s[I_\C]\to I_\C$ is an isomorphism in the infinitary definability patterns language, it induces an isomorphism $$\overline{r}\colon \aut(s[I_\C])\to \aut(I_\C)$$ given by $\overline{r}(\sigma)(r(p)):=r(\sigma(p))$. Our goal is then to prove that there exists an isomorphism $f$ such that the diagram below commutes:
   \begin{center}
        \begin{tikzpicture}
        \matrix (m) [matrix of math nodes,row sep=3em,column sep=4em,minimum width=2em]
  {
     \aut(s[I_\C]) & \Tilde{\pi}(u')\Tilde{\pi}(\mathcal{M}')\!\!\upharpoonright_{\pi[s[I_\C]]} \\
     \aut(I_\C) & \Tilde{\rho}(u)\Tilde{\rho}(\mathcal{M})\!\!\upharpoonright_{\rho[I_\C]} \\};
  \path[-stealth]
    (m-1-1) edge [->>] node [left] {$\cong$} node [right] {$\overline{r}$} (m-2-1)
            edge [->>] node [above] {$\Tilde{\Tilde{\pi}}$} 
            (m-1-2)
    (m-2-1.east|-m-2-2) edge [->>]  node [above] {$\Tilde{\Tilde{\rho}}$}
            (m-2-2)
    (m-1-2) edge [->>] [dashed] node [right] {$\exists f$ } 
    (m-2-2);
    \end{tikzpicture}
\end{center}

We first prove that a function $f$ such that the above diagram commutes exists. It is enough to show that $\ker(\Tilde{\Tilde{\pi}})\subseteq \ker(\Tilde{\Tilde{\rho}}\circ \overline{r})$. Note that $\sigma\in \ker(\Tilde{\Tilde{\pi}})$ if and only if for every $p\in s[I_\C]$ we have that $\sigma(p)F' p$. Take an arbitrary $\sigma\in \ker(\Tilde{\Tilde{\pi}})$. By compatibility of $F'$ and F, $\sigma(p)F' p$ implies $ r(\sigma(p))F r(p)$. Hence, for every $p\in s[I_\C]$ we have $\overline{r}(\sigma)(r(p))F r(p)$. Therefore, $\overline{r}(\sigma)$ is in $\ker(\Tilde{\Tilde{\rho}})$ and $\sigma$ is in $\ker(\Tilde{\Tilde{\rho}}\circ \overline{r})$.

To see that $f$ is an isomorphism, it is enough to show that $\ker(\Tilde{\Tilde{\pi}})\supseteq \ker(\Tilde{\Tilde{\rho}}\circ \overline{r})$. So take an arbitrary $\sigma\in \ker(\Tilde{\Tilde{\rho}}\circ \overline{r})$. Then, for every $p\in s[I_\C]$ we have that $r(\sigma(p))F r(p)$. \begin{claim}
    $r(\sigma(p))F r(p)$ implies $\sigma(p)F' p$.
\end{claim}

\begin{proof}[Proof of claim]
    By compatibility of $F'$ and $F$, there are $s_1,s_2\in S_X(\C')$ such that $r(s_1)=r(\sigma(p))$, $r(s_2)=r(p)$ and $s_1F' s_2$. At the same time, since $p,\sigma(p)\in s[I_\C]$, there are $i,j<\mu$ such that: \begin{align*}
       \sigma(p)=p_i' \text{ and } p=p_j';\\
        r(\sigma(p))=p_i \text{ and } r(p)=p_j.        
    \end{align*}
    Hence, $c(s_1,s_2)\supseteq c(r(s_1),r(s_2))=c(r(\sigma(p)),r(p))=c(p_i,p_j)=c(p_i',p_j')=c(\sigma(p),p)$ (where the penultimate equality follows from the fact that $\mathfrak{p}'$ is a strong heir extension of $\mathfrak{p}$). Thus, by Fact \ref{content and ellis semigroup}, there is $\eta\in E(S_X(\C'))$ such that $$\eta(s_1,s_2)=(\sigma(p),p).$$
    Therefore, since $F'$ is $\aut(\C')$-invariant and closed, we conclude that $\sigma(p)F' p$.
\end{proof}
By the claim and the above description of $\ker(\Tilde{\Tilde{\pi}})$, we get that $\sigma\in \ker(\Tilde{\Tilde{\pi}})$.

Moreover, $f$ is a homeomorphism, where $\Tilde{\pi}(u')\Tilde{\pi}(\mathcal{M}')\!\!\upharpoonright_{\pi[s[I_\C]]}$ is equipped with the topology induced from the $\tau$-topology on $\Tilde{\pi}(u')\Tilde{\pi}(\mathcal{M}')$ via the right vertical isomorphism in diagram \hyperref[diagram pi]{(1)}, and  $ \Tilde{\rho}(u)\Tilde{\rho}(\mathcal{M})\!\!\upharpoonright_{\rho[I_\C]}$ with the topology induced from the $\tau$-topology on $ \Tilde{\rho}(u)\Tilde{\rho}(\mathcal{M})$ via the right vertical isomorphism in diagram \hyperref[diagram pi]{(2)}.
To see this, it is enough to show that:
\begin{itemize}
    \item $\tilde{\tilde{\pi}}$ and $\tilde{\tilde{\rho}}$ are topological quotient maps,
    \item $\bar r$ is a topological isomorphism.
\end{itemize}
The fact that  $\tilde{\tilde{\pi}}$ and $\tilde{\tilde{\rho}}$ are topological quotient maps follows from the fact that in their corresponding diagrams \hyperref[diagram pi]{(1)} and \hyperref[diagram rho]{(2)}, the upper horizontal maps are topological quotient maps by Fact \ref{epimorphism semigroup to group} and the left vertical maps are topological isomorphisms by Proposition \ref{delta is homeomorphism}. The fact that $\bar r$ is a homeomorphism follows trivially by the definition of the ipp-topology and the fact that $\bar r$ is induced by an isomorphism of infinitary definability patterns structures.

We have proved that $f$ is topological isomorphism of groups. Since the right vertical maps in diagrams \hyperref[diagram pi]{(1)} and \hyperref[diagram rho]{(2)} are also topological isomorphisms of groups, we conclude that the Ellis groups $\Tilde{\pi}(u')\Tilde{\pi}(\mathcal{M}')$ and $\Tilde{\rho}(u)\Tilde{\rho}(\mathcal{M})$ are topologically isomorphic, as required.
\end{proof}

\section{Applications to WAP, tame, stable, and NIP context}\label{section: nip stable tame wap}

In this section, we use Theorem \ref{compatible implies absolute} to obtain absoluteness of Ellis groups of several canonical quotients of type-spaces. These are the main results of this paper. 

As in the last section, $\C' \succ \C$, $X$ is a $\emptyset$-type definable set, and $r \colon S_X(\C') \to S_X(\C)$ is the restriction map.

Let $\rho(x,y)$ be a partial type over $\emptyset$ (closed under conjunction) which defines an equivalence relation $E(M)$ on $X(M)$ in a sufficiently saturated (equivalently, in every) model $M$ of $T$.

Recall from Section \ref{subsection: model theory} that $\tilde{E}$ is the equivalence relation on $S_X(\C)$ given by 
$$p \tilde{E} q \iff (\exists a \models p,  b \models q) (\rho(a,b)),$$ 
and $\tilde{E'}$ is the equivalence relation on $S_X(\C')$ given by 
$$p' \tilde{E'} q' \iff (\exists a' \models p',  b' \models q') (\rho(a',b')).$$

Proposition \ref{proposition: uniformly type-definable are compatible} and Corollary \ref{corollary: compatibility for st and NIP} below hold without any saturation assumptions on $\C$ and $\C'$.

\begin{prop}\label{proposition: uniformly type-definable are compatible}
The equivalence relations $\tilde{E'}$ and $\tilde{E}$  are compatible.
\end{prop}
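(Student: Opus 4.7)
The plan is to verify the two inclusions $r[\tilde{E}'] \subseteq \tilde{E}$ and $\tilde{E} \subseteq r[\tilde{E}']$ directly from the definitions, by moving everything into a common ambient monster model $\C''$ that is saturated enough to realize complete types over $\C'$ (hence also over $\C$). Since $\rho(x,y)$ is a partial type over $\emptyset$, the relation $\rho(a,b)$ depends only on $a$ and $b$ and can be transferred freely between $\C$ and $\C'$, so no saturation hypothesis on $\C$ or $\C'$ themselves is needed.

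For the inclusion $r[\tilde{E}'] \subseteq \tilde{E}$, I will take $(p',q') \in \tilde{E}'$ and, by definition, pick $a' \models p'$ and $b' \models q'$ in $\C''$ with $\rho(a',b')$ holding. Restricting to $\C$-parameters, $a' \models r(p')$ and $b' \models r(q')$, so the same pair $(a',b')$ witnesses $(r(p'),r(q')) \in \tilde{E}$. For the reverse inclusion $\tilde{E} \subseteq r[\tilde{E}']$, given $(p,q) \in \tilde{E}$, I will take witnesses $a \models p$, $b \models q$ with $\rho(a,b)$ in $\C''$, and then set $p' := \tp(a/\C')$ and $q' := \tp(b/\C')$. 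By construction $r(p') = p$ and $r(q') = q$, and the same witnesses $a,b$ show $(p',q') \in \tilde{E}'$, so $(p,q) = (r(p'),r(q')) \in r[\tilde{E}']$.

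There is no real obstacle here: the argument is entirely formal once one keeps track of where realizations live. The only point to be careful about is that in the reverse direction the realizations of $p$ and $q$ over $\C$ witnessing $\tilde{E}$ must be chosen in an ambient model large enough to also contain $\C'$, so that their types over $\C'$ make sense — this is always possible by taking $\C''$ to be any sufficiently saturated elementary extension of $\C'$, and the resulting types $p', q' \in S_X(\C')$ are then automatically extensions of $p, q$ via $r$.
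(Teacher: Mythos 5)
Your proof is correct, and on the nontrivial inclusion $\tilde{E}\subseteq r[\tilde{E'}]$ it takes a genuinely different, more economical route than the paper's. The paper keeps the witnesses $a\models p$, $b\models q$ with $\rho(a,b)$ where they are, passes to an heir extension $\tp(a'b'/\C')$ of $\tp(ab/\C)$, and then verifies $p'\,\tilde{E'}\,q'$ by a compactness argument that uses the heir property to pull the offending parameters from $\C'$ down to $\C$ and contradict $\rho(a,b)$. You instead use that $p\,\tilde{E}\,q$ amounts to consistency of the partial type $p(x)\cup q(y)\cup\rho(x,y)$, which does not depend on the ambient model, so the witnesses can be re-realized inside a sufficiently saturated $\C''\succ\C'$; the types $p':=\tp(a/\C')$ and $q':=\tp(b/\C')$ of these witnesses then restrict correctly to $p$ and $q$, and the very same pair $(a,b)$ witnesses $p'\,\tilde{E'}\,q'$, so no further verification is needed. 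This is a clean shortcut: it avoids heirs entirely and makes the second half of the argument trivial, at the cost of having to say explicitly (as you do) that the witnessing pair may be relocated into a model containing $\C'$. What the paper's device buys is mainly uniformity with the rest of the text, where (strong) heir extensions are the standard tool; for this particular proposition your direct argument is fully adequate, and, like the paper's, it places no saturation hypotheses on $\C$ or $\C'$ themselves, only on the auxiliary model $\C''$, which one is free to choose.
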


\begin{proof}
The goal is to prove that $r[\tilde{E'}] = \tilde{E}$.

$(\subseteq)$ Consider any $p',q' \in S_X(\C')$ with $p' \tilde{E'}q'$. Then there are $a' \models p'$ and $b' \models q'$ such that $\rho(a',b')$. Hence, $a' \models r(p')$ and $b' \models r(b')$, and we get $r(p') \tilde{E} r(q')$.

$(\supseteq)$ Consider any $p,q \in S_X(\C)$ with $p \tilde{E}q$. The goal is to find some extensions $p',q' \in S_X(\C')$ of $p$ and $q$ respectively, satisfying $p' \tilde{E'} q'$. 

Take $a \models p$ and $b \models q$ such that $\rho(a,b)$. Let $\tp(a'b'/\C')$ be an heir extension of $\tp(ab/\C)$. We claim that $p':=\tp(a'/\C')$ and $q':=\tp(b'/\C)$ do the job. If not, then, by compactness, there are formulas $\varphi(x,y) \in \rho(x,y)$, $\psi_1(x,c') \in p'$, and $\psi_2(x,c') \in q'$ for which there are no $a''$ and $b''$ such that $\psi_1(a'',c') \wedge \psi_2(b'',c') \wedge \varphi(a'',b'')$ (here $\psi_i(x,x')$ is a formula without parameters and $c'$ is a tuple from $\C'$). Then 
$$\psi_1(x,c') \wedge \psi_2(y,c') \wedge \neg (\exists z)(\exists t)(\psi_1(z,c') \wedge \psi_2(t,c') \wedge \varphi(z,t)) \in \tp(a'b'/\C').$$
Since $\tp(a'b'/\C')$ is an heir extension of $\tp(ab/\C)$, there is $c \in \C$ such that 
$$\psi_1(x,c) \wedge \psi_2(y,c) \wedge \neg (\exists z)(\exists t)(\psi_1(z,c) \wedge \psi_2(t,c) \wedge \varphi(z,t)) \in \tp(ab/\C).$$
Taking $z:=a$ and $t:=b$, we get a contradiction with the fact that $\rho(a,b)$.
\end{proof}

Consider the relations $\tilde{E'}^{\textrm{st}}_\emptyset$, $\tilde{E}^{\textrm{st}}_\emptyset$, $\tilde{E'}^{\textrm{NIP}}_\emptyset$, and $\tilde{E}^{\textrm{NIP}}_\emptyset$ defined after Proposition \ref{remark: pist and piNIP}. 

\begin{cor}\label{corollary: compatibility for st and NIP}
\begin{enumerate}
\item The equivalence relations $\tilde{E'}^{\textrm{st}}_\emptyset$ and $\tilde{E}^{\textrm{st}}_\emptyset$  are compatible.
\item The equivalence relations $\tilde{E'}^{\textrm{NIP}}_\emptyset$ and $\tilde{E}^{\textrm{NIP}}_\emptyset$  are compatible.
\end{enumerate}
\end{cor}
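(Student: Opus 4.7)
The plan is to deduce the corollary directly from Proposition \ref{proposition: uniformly type-definable are compatible}, with essentially no extra work beyond verifying that the hypothesis is available. The setup of Proposition \ref{proposition: uniformly type-definable are compatible} asks for a partial type $\rho(x,y)$ over $\emptyset$ which defines an equivalence relation on $X(M)$ in every (sufficiently saturated) model $M$ of $T$, and then asserts that the induced relations $\tilde{E}'$ on $S_X(\C')$ and $\tilde{E}$ on $S_X(\C)$ are compatible. The relations $\tilde{E}^{\textrm{st}}_\emptyset, \tilde{E}'^{\textrm{st}}_\emptyset$ and $\tilde{E}^{\textrm{NIP}}_\emptyset, \tilde{E}'^{\textrm{NIP}}_\emptyset$ are defined after Proposition \ref{remark: pist and piNIP} in precisely this way, using the partial types $\pi^{\textrm{st}}(x,y)$ and $\pi^{\textrm{NIP}}(x,y)$ respectively.

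For item (1), I would take $\rho(x,y) := \pi^{\textrm{st}}(x,y)$, and for item (2), $\rho(x,y) := \pi^{\textrm{NIP}}(x,y)$. In both cases, Proposition \ref{remark: pist and piNIP} supplies the key fact that $\rho(x,y)$ defines the finest $\emptyset$-type-definable equivalence relation with stable (resp. NIP) quotient in every $(\aleph_0 + \lambda)$-saturated model; in particular, it does define an equivalence relation in every sufficiently saturated model, which is the hypothesis required by Proposition \ref{proposition: uniformly type-definable are compatible}. The minor technical requirement that $\rho$ be closed under conjunction is harmless: one can replace $\rho$ by the set of finite conjunctions of its members without altering the equivalence relation it defines, and the definition of $\tilde{E}^{\textrm{st}}_\emptyset$ via existence of realizations $a,b$ with $\rho(a,b)$ is unaffected.

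Given these identifications, Proposition \ref{proposition: uniformly type-definable are compatible} immediately yields that $r[\tilde{E}'^{\textrm{st}}_\emptyset] = \tilde{E}^{\textrm{st}}_\emptyset$ and $r[\tilde{E}'^{\textrm{NIP}}_\emptyset] = \tilde{E}^{\textrm{NIP}}_\emptyset$, which is the conclusion. The absence of any saturation hypothesis on $\C,\C'$ in the corollary is consistent with Proposition \ref{proposition: uniformly type-definable are compatible}, whose proof uses only the existence of an heir extension of $\tp(ab/\C)$ in a big ambient monster model (in which the realizations $a,b$ witnessing $\pi^{\textrm{st}}(a,b)$ or $\pi^{\textrm{NIP}}(a,b)$ live by construction of $\tilde{E}^{\textrm{st}}_\emptyset, \tilde{E}^{\textrm{NIP}}_\emptyset$).

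There is essentially no obstacle in this argument; the work has already been done in Proposition \ref{remark: pist and piNIP} (which provides a uniform $\emptyset$-type-definable witness for the maximal stable and NIP quotients) and in Proposition \ref{proposition: uniformly type-definable are compatible} (which handles the compatibility of any such uniformly defined relation). The only thing to be slightly careful about conceptually is that $\tilde{E}^{\textrm{st}}_\emptyset$ on $S_X(\C)$ is genuinely defined via the external partial type $\pi^{\textrm{st}}$ rather than via an equivalence relation on $X(\C)$ itself (since $\C$ need not be saturated), but this is exactly the formulation under which Proposition \ref{proposition: uniformly type-definable are compatible} is stated, so no additional argument is needed.
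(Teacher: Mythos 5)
Your proposal is correct and matches the paper's argument exactly: the paper also deduces both items immediately from Proposition \ref{proposition: uniformly type-definable are compatible} applied to the partial types $\pi^{\textrm{st}}$ and $\pi^{\textrm{NIP}}$ furnished by Proposition \ref{remark: pist and piNIP}. Your additional remarks on closure under conjunction and on the absence of saturation hypotheses are sound but not needed beyond what the cited propositions already provide.
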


\begin{proof}
Both items follow immediately from Proposition \ref{proposition: uniformly type-definable are compatible}.
\end{proof}

From Corollary \ref{corollary: compatibility for st and NIP} and Theorem \ref{compatible implies absolute}, we get the following corollary (note the saturation assumption).

\begin{cor}
\begin{enumerate}
\item     The Ellis group of $S_X(\C)/\tilde{E}^{\textrm{st}}_\emptyset$ (treated as a semitopological group with the $\tau$-topology) does not depend on the choice of $\C$ as long as $\C$ is at least $\aleph_0$-saturated and strongly $\aleph_0$-homogeneous.
\item	The Ellis group of $S_X(\C)/\tilde{E}^{\textrm{NIP}}_\emptyset$ (treated as a semitopological group with the $\tau$-topology) does not depend on the choice of $\C$ as long as $\C$ is at least $\aleph_0$-saturated and strongly $\aleph_0$-homogeneous.
\end{enumerate}
\end{cor}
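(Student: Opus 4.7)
The strategy is a direct combination of Corollary \ref{corollary: compatibility for st and NIP} with Theorem \ref{compatible implies absolute}, together with a standard reduction to the case of an elementary extension. Fix two $\aleph_0$-saturated and strongly $\aleph_0$-homogeneous models $\C_1$ and $\C_2$ of $T$. The plan is to find a third model $\C'$ which is simultaneously an elementary extension of both $\C_1$ and $\C_2$ and is itself $\aleph_0$-saturated and strongly $\aleph_0$-homogeneous, and then show that the Ellis groups for $\C_1$ and $\C_2$ are both topologically isomorphic to the Ellis group computed for $\C'$.

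For the reduction, I would take $\C'$ to be a sufficiently saturated and strongly homogeneous monster model into which both $\C_1$ and $\C_2$ embed elementarily; standard model theory guarantees existence of such a $\C'$, and one can freely assume $\C_1, \C_2 \prec \C'$. Now fix $i \in \{1,2\}$. By Corollary \ref{corollary: compatibility for st and NIP}(1) applied to the pair $\C_i \prec \C'$, the relation $\tilde{E}'^{\textrm{st}}_\emptyset$ on $S_X(\C')$ is compatible with $\tilde{E}^{\textrm{st}}_\emptyset$ on $S_X(\C_i)$; similarly Corollary \ref{corollary: compatibility for st and NIP}(2) gives compatibility of $\tilde{E}'^{\textrm{NIP}}_\emptyset$ with $\tilde{E}^{\textrm{NIP}}_\emptyset$. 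Applying Theorem \ref{compatible implies absolute} in each case (the saturation hypotheses there are exactly $\aleph_0$-saturation and strong $\aleph_0$-homogeneity of $\C_i$ and $\C'$), we obtain topological isomorphisms between the $\tau$-topological Ellis group of the flow on $S_X(\C_i)/\tilde{E}^{\textrm{st}}_\emptyset$ and that of the flow on $S_X(\C')/\tilde{E}'^{\textrm{st}}_\emptyset$, and analogously for NIP.

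Running this for $i=1$ and $i=2$ and composing the resulting isomorphisms through the common flow built from $\C'$ yields a topological isomorphism between the Ellis groups computed from $\C_1$ and from $\C_2$, in both the stable and the NIP cases. This proves both items of the corollary.

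The only step that requires even minor care is the zigzag reduction through a common elementary extension; no genuine obstacle arises since all the hard work is already packaged into Corollary \ref{corollary: compatibility for st and NIP} and Theorem \ref{compatible implies absolute}, and the saturation/homogeneity hypotheses propagate correctly because the definition of $\tilde{E}^{\textrm{st}}_\emptyset$ (resp.\ $\tilde{E}^{\textrm{NIP}}_\emptyset$) via $\pi^{\textrm{st}}$ (resp.\ $\pi^{\textrm{NIP}}$) from Proposition \ref{remark: pist and piNIP} is uniform across models.
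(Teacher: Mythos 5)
Your proposal is correct and is exactly the argument the paper intends: the corollary is stated as an immediate consequence of Corollary \ref{corollary: compatibility for st and NIP} and Theorem \ref{compatible implies absolute}, and the zigzag through a common $\aleph_0$-saturated, strongly $\aleph_0$-homogeneous elementary extension $\C'$ of $\C_1$ and $\C_2$ is the standard (and intended) way to pass from the pairwise statement to independence of the choice of $\C$. Your observation that the relations $\tilde{E}^{\textrm{st}}_\emptyset$ and $\tilde{E}^{\textrm{NIP}}_\emptyset$ are defined uniformly via the partial types $\pi^{\textrm{st}}$ and $\pi^{\textrm{NIP}}$ from Proposition \ref{remark: pist and piNIP} is precisely what makes the reduction go through, so there is nothing to add.
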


Next, we will show that the same is true for the equivalence relations $\fwapp$ and $\fwap$ described at the very end of Section \ref{section: topological dynamics}. However, this time we will need $\C$ and $\C'$ to be at least $(\aleph_0+\lambda)^+$-saturated and strongly $(\aleph_0+\lambda)^+$-homogeneous (where lambda is such that $X\subseteq \C^\lambda$). In particular, if $\lambda$ is finite, then $\aleph_1$-saturation and strong $\aleph_1$-homogeneity is enough. 

\begin{remark}\label{remark: reduction to big C'}
    For the proof of the next theorem, without loss of generality we will assume that $\C$ is $\C'$-small in the sense that the degree of saturation of $\C'$ is bigger than $|\C|$. This is because if the theorem holds under this assumption, we can take a monster model $\C'' \succ \C'$ in which both $\C$ and $\C'$ are small, and apply the result to the pairs $\C'' \succ \C'$ and $\C'' \succ \C$. Namely, for $r_1\colon S_X(\C') \to S_X(\C)$, $r_2 \colon S_X(\C'') \to S_X(\C')$, and $r_3 \colon S_X(\C'') \to S_X(\C)$ being the restriction maps, the theorem yields $r_2[F_{\textrm{WAP}}'']=\fwapp$ and $r_3[F_{\textrm{WAP}}'']=\fwap$. As $r_1[r_2[F_{\textrm{WAP}}'']]=r_3[F_{\textrm{WAP}}'']$, we conclude that $r_1[\fwapp] =\fwap$.
\end{remark}

\begin{teor}\label{fwapp and fwap}
The equivalence relations $\fwapp$ and $\fwap$ are compatible as long as $\C$ and $\C'$ are at least $(\aleph_0+\lambda)^+$-saturated and strongly $(\aleph_0+\lambda)^+$-homogeneous.
\end{teor}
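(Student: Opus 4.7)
By Remark \ref{remark: reduction to big C'}, I may assume $\C$ is $\C'$-small, so $\C'$ is $|\C|^+$-saturated and strongly $|\C|^+$-homogeneous. The plan is to reduce the compatibility $r[\fwapp] = \fwap$ to the lifting equivalence $(\ast)$: for every continuous $f \colon S_X(\C) \to \R$, $f \in \textrm{WAP}(S_X(\C))$ if and only if $f \circ r \in \textrm{WAP}(S_X(\C'))$. Granted $(\ast)$, the inclusion $r[\fwapp]\subseteq\fwap$ is immediate: given $(p',q')\in\fwapp$ and $f\in\textrm{WAP}(S_X(\C))$, the forward implication yields $f\circ r\in\textrm{WAP}(S_X(\C'))$, so $f(r(p'))=f(r(q'))$, forcing $(r(p'),r(q'))\in\fwap$.

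For $\fwap\subseteq r[\fwapp]$ I verify that $r[\fwapp]$ is a closed, $\aut(\C)$-invariant equivalence relation on $S_X(\C)$ with WAP quotient, concluding by minimality of $\fwap$. Closedness is automatic. Transitivity relies on the observation that any two lifts $q'_1,q'_2\in r^{-1}(q)$ of a common $q\in S_X(\C)$ are conjugate by some $\tau\in\aut(\C'/\C)$ (by strong $|\C|^+$-homogeneity of $\C'$ over $\C$): given $(p'_1,q'_1),(q'_2,s'_2)\in\fwapp$ with $\tau(q'_2)=q'_1$, one obtains $(p'_1,\tau(s'_2))\in\fwapp$ with restrictions $p$ and $s$. $\aut(\C)$-invariance follows from extending $\sigma\in\aut(\C)$ to $\tilde\sigma\in\aut(\C')$ preserving $\C$ setwise and using the identity $r(\tilde\sigma(p'))=\sigma(r(p'))$. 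For the WAP property of the quotient, take any continuous $r[\fwapp]$-invariant $\tilde f$ on $S_X(\C)$; then $\tilde f\circ r$ is $\fwapp$-invariant, factors through the WAP flow $S_X(\C')/\fwapp$, and hence lies in $\textrm{WAP}(S_X(\C'))$ by Remark \ref{remark: transfer of WAP and tame}; the backward implication of $(\ast)$ then gives $\tilde f\in\textrm{WAP}(S_X(\C))$, and applying this to every continuous function on the quotient (which separates its points) yields WAP of the quotient flow via Fact \ref{separating points: WAP}.

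The backward implication of $(\ast)$ is routine: given $(\sigma_n)\subseteq\aut(\C)$ and $(p_m)\subseteq S_X(\C)$, extend each $\sigma_n$ to $\tilde\sigma_n\in\aut(\C')$ preserving $\C$ setwise and choose lifts $p'_m\in r^{-1}(p_m)$; the identity $r(\tilde\sigma_n(p'_m))=\sigma_n(p_m)$ gives $f(\sigma_n p_m)=(f\circ r)(\tilde\sigma_n(p'_m))$, and the double-limit condition for $f\circ r$ transfers directly. The forward implication is the main obstacle, since a generic $\sigma'\in\aut(\C')$ need not preserve $\C$ setwise. I plan to verify the double-limit criterion of Fact \ref{double limit}(4) on the dense subset $X_0:=\{\tp(a/\C'):a\in X(\C')\}$ by indiscernibility extraction. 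Fix sequences $(\sigma'_n)_{n<\omega}\subseteq\aut(\C')$ and $(q'_m)_{m<\omega}\subseteq X_0$ realized by $b_m\in X(\C')$ with both iterated limits of $f(\tp(\sigma'_n(b_m)/\C))$ existing, and set $A:=\bigcup_{n<\omega}\sigma'^{-1}_n(\C)\subseteq\C'$, of cardinality at most $|\C|$. By pigeonhole on the at-most-countably-many realized types in $\{\tp(b_m/A):m<\omega\}$, pass to a subsequence with constant $1$-type over $A$; then $|\C|^+$-saturation of $\C'$ combined with Ramsey plus compactness yields an $A$-indiscernible sequence $(b''_m)_{m<\omega}\subseteq X(\C')$ with the same EM-type over $A$, so $\tp(b''_m/A)=\tp(b_m/A)$ for every $m$. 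The decisive observation is that $\sigma'_n(A)\supseteq\sigma'_n(\sigma'^{-1}_n(\C))=\C$ for every $n$: applying $\sigma'_n$ to the equality $\tp(b_m/A)=\tp(b''_m/A)$ and restricting to $\C$ yields $\tp(\sigma'_n(b_m)/\C)=\tp(\sigma'_n(b''_m)/\C)$, so iterated limits of $f$ along $(b_m)$ and $(b''_m)$ coincide; and $A$-indiscernibility of $(b''_m)$ combined with $\sigma'_n(A)\supseteq\C$
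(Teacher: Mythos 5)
Your high-level skeleton is reasonable: the lifting equivalence $(\ast)$ is indeed equivalent to the theorem (its forward direction is exactly $r[\fwapp]\subseteq\fwap$, since $\textrm{WAP}(S_X(\C))$ is precisely the pullback of $C(S_X(\C)/\fwap)$), the backward direction is routine as you say, and verifying that $r[\fwapp]$ is a closed, invariant equivalence relation with WAP quotient is also the paper's route for the inclusion $\fwap\subseteq r[\fwapp]$. But two steps are genuinely broken. First, transitivity: it is false that any two lifts $q'_1,q'_2\in r^{-1}(q)$ are conjugate under $\aut(\C'/\C)$. Conjugation by $\aut(\C'/\C)$ preserves $\C$-invariance of a global type, yet a type over $\C$ can have both an $\aut(\C'/\C)$-invariant global extension and a non-invariant one; strong $|\C|^+$-homogeneity of $\C'$ only extends partial elementary maps between small subsets of $\C'$ and gives no automorphism of $\C'$ matching two prescribed global types. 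The paper avoids this by replacing all lifts by the canonical lifts $p_i'$ coming from one fixed strong heir extension of an enumeration of $S_X(\C)$, and then transferring $\fwapp$ to those lifts via the content calculus (Fact \ref{content and ellis semigroup}) together with closedness and $\aut(\C')$-invariance of $\fwapp$.

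Second, and more seriously, your proof of the forward implication of $(\ast)$ — the hard direction — does not work. The pigeonhole step fails: $\{\tp(b_m/A):m<\omega\}$ is a countable set of types indexed by a countable set, so no subsequence with constant type over $A$ need exist, and without that there is no $A$-indiscernible $(b''_m)$ with $\tp(b''_m/A)=\tp(b_m/A)$ for all $m$. Moreover, if such a sequence did exist, $A$-indiscernibility together with $\sigma_n'^{-1}(\C)\subseteq A$ would force $\tp(\sigma'_n(b''_m)/\C)$, and hence the entire double array, to be constant in $m$; your transfer would then show the iterated limits agree for \emph{every} continuous $f$ on $S_X(\C)$, i.e.\ that every pullback $f\circ r$ is WAP, which is false whenever $T$ is unstable. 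In other words, the argument never uses the hypothesis that $f$ is WAP and cannot be completed as written (it is also cut off before its conclusion). The paper's proof of this direction is essentially different: it uses $(\aleph_0+\lambda)^+$-saturation to move the \emph{small} parameter set $N=\{c'_m\}\cup\{\sigma_n^{-1}(c'_m)\}$ into $\C$, adjusts each $\sigma_n$ so that $\tau_n:=\sigma_n\!\upharpoonright_\C\in\aut(\C)$, builds via a strong heir a continuous function $h$ on the closed set of $N$-invariant types, extends it by Tietze, and thereby transfers the failure of the double-limit criterion (Fact \ref{double limit}) down to the flow $(\aut(\C),S_X(\C)/\fwap)$, contradicting its WAP-ness. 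Some device of this kind, exploiting the saturation hypothesis to bring the countably much relevant data inside $\C$, is unavoidable and is missing from your sketch.
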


\begin{proof}
    We first prove that $\fwap\subseteq r[\fwapp]$. It suffices to show that $ r[\fwapp]$ is a closed, $\aut(\C)$-invariant equivalence relation on $S_X(\C)$ with WAP quotient.
    \begin{itemize}
        \item Closedness is clear by continuity of $r$ and compactness of type spaces.
        \item Equivalence relation: Take $a=r(\alpha)$, $b=r(\beta)=r(\beta')$ and $c=r(\gamma)$ where $\alpha \fwapp \beta$ and $\beta' \fwapp \gamma$. Let $(p_i)_{i<\mu}$ be an enumeration of $S_X(\C)$ and $(a_i)_{i<\mu}$ be a sequence of realizations. Consider the type $\mathfrak{p}:=\tp((a_i)_{i<\mu}/\C)$, and let $\mathfrak{p}':=\tp ((a'_i)_{i<\mu}/\C')$ be a strong heir extension of $\mathfrak{p}$. For each $i<\mu$ we denote $\tp(a_i'/\C')$ by $p_i'$. Obviously, $a=p_{i_1}$, $b=p_{i_2}$, and $c=p_{i_3}$ for some $i_1,i_2,i_3<\mu$.
        \begin{claim}
            $p'_{i_1}\fwapp p'_{i_2} \fwapp p'_{i_3}$.
        \end{claim}
        \begin{proof}[Proof of claim]
            Clearly, $r(p'_i)=p_i$ for all $i<\mu$. Since $\mathfrak{p}'$ is a strong heir extension of $\mathfrak{p}$, we have 
            \begin{align*}
                c(p'_{i_1},p'_{i_2})&=c(p_{i_1},p_{i_2}) = c(r(\alpha),r(\beta)) \subseteq c(\alpha,\beta),\\
                c(p'_{i_2},p'_{i_3})&=c(p_{i_2},p_{i_3}) = c(r(\beta'),r(\gamma))\subseteq c(\beta',\gamma).
            \end{align*}
            Thus, by Fact \ref{content and ellis semigroup}, \begin{align*}
                \exists \eta_1 \in E(S_X(\C'))&[ \eta_1 (\alpha, \beta)=(p'_{i_1},p'_{i_2})],\\
                \exists \eta_2 \in E(S_X(\C'))&[ \eta_2 (\beta', \gamma)=(p'_{i_2},p'_{i_3})].
            \end{align*}
            Since the relation $\fwapp$ is $\aut(\C')$-invariant and closed, $ \alpha \fwapp \beta$, and  $\beta'\fwapp \gamma $, we conclude that $p'_{i_1}\fwapp p'_{i_2} \fwapp p'_{i_3}$.
        \end{proof}
        By the claim, $p'_{i_1}\fwapp p'_{i_3}$, so $a=r(p'_{i_1}) r[\fwapp] r(p'_{i_3})=c$.
        \item $\aut(\C)$-invariant: Take an arbitrary $\sigma\in \aut(\C)$ and extend it to $\sigma'\in \aut(\C')$. Consider any $a,b\in S_X(\C)$ such that $a r[\fwapp] b$ and let $\alpha$, $\beta \in S_X(\C')$ be such that $r(\alpha)=a$, $r(\beta)=b$, and $\alpha \fwapp \beta$. Then, $$ \sigma(a)=\sigma(r(\alpha))=r(\sigma'(\alpha))r[\fwapp] r(\sigma'(\beta))=\sigma(r(\beta))=\sigma(b). $$
        \item $S_X(\C)/r[\fwapp]$ is WAP: Assume for a contradiction that there is a function $f \in C ( S_X(\C)/r[\fwapp] )$ which is not WAP. That is, by Fact \ref{double limit}, there is a net $(\sigma_i)_{i\in \I}\subseteq \aut( \C)$ such that the functions $\sigma_if$ converge pointwise to some function $g\notin C ( S_X(\C)/r[\fwapp] )$. Note that $r^{-1}[r[\fwapp]]\supseteq \fwapp$ is a closed $\aut(\C'/\{ \C \})$-invariant equivalence relation on $S_X(\C')$, and $r$ induces a homeomorphism $$\Tilde{r}: \bslant{S_X(\C')}{r^{-1}[r[\fwapp]]}\to \bslant{S_X(\C)}{r[\fwapp]} $$ satisfying $$ \Tilde{r}\left(\sigma'\left(\bslant{p}{r^{-1}[r[\fwapp]]}\right) \right)=\sigma'\!\!\upharpoonright_\C\left(\bslant{r(p)}{r[\fwapp]}\right) $$ for all $\sigma'\in\aut(\C'/\{ \C\})$. 

        For every $i\in \I$ choose an extension $\sigma'_i\in \aut(\C'/\{ \C \})$ of $\sigma_i$. The above homeomorphism $\Tilde{r}$ together with $f$ induce a function $f'\in C ( S( \C')/ r^{-1}[r[\fwapp]] )$ given by $$f'\left( \bslant{p}{r^{-1}[r[\fwapp]]} \right):=f\left( \bslant{r(p)}{r[\fwapp]} \right). $$ By construction, the net $(\sigma'_if')_{i\in I}$ converges pointwise to a function $$g'\notin C ( S( \C')/ r^{-1}[r[\fwapp]] ).$$ Hence, the flow $$ \left( \aut(\C'/\{ \C \}), \bslant{ S_X(\C')} {r^{-1}[r[\fwapp]]} \right)$$  is not WAP, which implies that $(\aut(\C'), S_X(\C')/\fwapp) $ is not WAP (because WAP is preserved under decreasing the acting group and under taking quotients of flows, which follows by  Remark \ref{remark: transfer of WAP and tame}), a contradiction.
    \end{itemize}

    Now, we prove $\fwap\supseteq r[\fwapp]$. This is equivalent to $r^{-1}[\fwap]\supseteq \fwapp$. Note that $r^{-1}[\fwap]$ is a closed equivalence relation on $S_X(\C')$ but it might not be $\aut(\C')$-invariant. To solve it, we consider the equivalence relation $$ F:= \bigcap_{\sigma \in \aut(\C')} \sigma(r^{-1}[\fwap] ). $$ Then, it is enough to show that $F\supseteq \fwapp$, which is equivalent to the flow $$(\aut(\C'),  S_X(\C')/F)$$ being WAP.

    Assume for a contradiction that there is $f\in C(S_X(\C')/F)$ which is not WAP. Let $\overline{f}:S_X(\C')\to \R$ be given by $\overline{f}=f\circ \pi_F$, where $\pi_F:S_X(\C')\to S_X(\C')/F$ is the quotient map. Then, $\overline{f}\in C( S_X(\C'))$ and it is not a WAP function by  Remark \ref{remark: transfer of WAP and tame}. By Fact \ref{double limit}, there are $(\sigma_n)_{n<\omega}\subseteq \aut(\C')$ and $(c'_m )_{m<\omega}\subset X(\C')$ such that \begin{equation}\label{dlp f}
        \lim_n\lim_m (\sigma_n\overline{f}) (\tp(c'_m/\C')) \neq \lim_m\lim_n (\sigma_n\overline{f}) (\tp(c'_m/\C'))
    \end{equation} where both limits exist. Note that here we are using that the types over $\C'$ of the elements of $X(\C')$ form a dense subset of $S_X(\C')$, which uses that $X$ lives on $\C'$-small 
tuples.

    Consider the set $N:=\{ c'_m: m<\omega \}\cup \{ \sigma_n^{-1}(c'_m): m,n<\omega \}.$ By $(\aleph_0+\lambda)$-saturation of $\C$, we may assume that $N\subset \C$ (just find $\sigma\in\aut(\C')$ such that $\sigma[N]\subset \C$ and replace $N$ by $\sigma[N]$, $c'_m$ by $\sigma(c'_m)$, and $\sigma_n$ by $\sigma\circ \sigma_n$). Note that it might happen that none of the restrictions $\sigma_n\!\!\upharpoonright_\C$ is an automorphism of $\C$. However, for every $n<\omega$, by strong $(\aleph_0+\lambda)^+$-homogeneity of $\C$ (and strong $\lvert \C\rvert^+$-homogeneity of $\C')$, we can replace $\sigma_n$ by $\sigma'_n\in \aut(\C')$ so that $\sigma_n'^{-1}$ extends  $\sigma_n^{-1}\!\!\upharpoonright_{\{c'_m: m<\omega \}}$ and the restriction of $\sigma'_n$ to $\C$ is in $\aut(\C)$, so we may assume that, for every $n<\omega$, the restriction $\sigma_n\!\!\upharpoonright_\C=:\tau_n$ is an element of $\aut(\C)$.

    Let $\mathcal{H}:= \{ p\in S_X(\C): p \text{ is invariant over } N \}$; we enumerate $\mathcal{H}$ as $(p_i)_{i<\mu}$ and choose $a_i\models p_i$ for every $i<\mu$. Consider the type $\mathfrak{p}:=\tp((a_i)_{i<\mu}/\C)$ and let $\mathfrak{p}':=\tp ((a'_i)_{i<\mu}/\C')$ be a strong heir extension of $\mathfrak{p}$ in the language $\mL_N$ (i.e., $\mathcal{L}$ expanded by constants from $N$).
($\mathfrak{p}'$ exists by $\aleph_0$-saturation of $\C$ in the language $\mL_N$ which follows from $(\aleph_0+\lambda)^+$-saturation of $\C$ in the language $\mL$.) We denote $\tp(a_i'/\C')$ by $p_i'$ for each $i<\mu$. Since $\tp(a_i/\C)$ is $N$-invariant, $p_i'$ is the unique $N$-invariant extension of $p_i$ to $\C'$. Hence, the set $\mathcal{H}':=\{p_i'\}_{i<\mu}$ is precisely the set of all types in $S_X(\C')$ invariant over $N$, so it is closed in $S_X(\C')$. 

    Now, we define $h:\mathcal{H} \to \R$ by $h(p_i):=\overline{f}(p_i')$. The function $h$ belongs to $C(\mathcal{H})$ since for each closed interval $I\subseteq \R$ we have that $h^{-1}[I]=r[\overline{f}^{-1}[I] \cap \mathcal{H}']$ is a closed subset of $S_X(\C)$. Note that for each $c'\in N$ and $i<\mu$ such that $p_i=\tp(c'/\C)$ we have $p_i'=\tp(c'/\C')$, and so
    \begin{equation}\label{6.2}
    \begin{split}
        (\sigma_n\overline{f})( \tp( c'_m/\C') ) = \overline{f}(\tp(\sigma_n^{-1}(c'_m)/\C'))=\\=h(\tp(\sigma_n^{-1}(c'_m)/\C))= (\tau_nh)( \tp( c'_m/\C)). 
    \end{split}
    \end{equation} Using (\ref{dlp f}) and (\ref{6.2}), we have
    \begin{equation}\label{6.3}
        \lim_n\lim_m (\tau_n h) (\tp(c'_m/\C)) \neq \lim_m\lim_n (\tau_n h) (\tp(c'_m/\C))
    \end{equation} where both limits exist.

    \begin{claim}
        For $p_i,p_j \in\mathcal{H}$, if $p_i \fwap p_j$, then $h(p_i)=h(p_j)$.
    \end{claim}
    \begin{proof}[Proof of claim]
        We show that $p_i' F p_j'$. Choose an arbitrary $\sigma\in\aut(\C')$. We have that $$ c(p_i,p_j)=c(p_i',p_j')=c(\sigma(p_i'),\sigma(p_j'))\supseteq c(r(\sigma(p_i')),r(\sigma(p_j'))) .$$ Hence, there is $\eta\in E(S_X(\C))$ such that $\eta(p_i,p_j)=(r(\sigma(p_i')),r(\sigma(p_j')))$, which implies that $ r(\sigma(p_i')) \fwap r(\sigma(p_j')) $ (because $\fwap$ is $\aut(\C)$-invariant and closed). We then have $\sigma(p_i') r^{-1}[\fwap]\sigma(p_j')$, and since $\sigma$ was arbitrary, we conclude that $p_i' F p_j'$. Therefore, since $\overline{f}=f\circ\pi_F$, we obtain that $\overline{f}(p'_i)=\overline{f}(p'_j)$, so $h(p_i)=h(p_j)$.
    \end{proof}

    Clearly, $\mathcal{H}/\fwap$ is a closed subset of $S_X(\C)/\fwap$ and, by the claim, $h=g\circ \rho$ for some $g\in C(\mathcal{H}/\fwap )$ where $\rho: \mathcal{H} \to \mathcal{H}/\fwap$ the quotient map. Tietze's extension theorem yields a function $\overline{g}\in C(S_X(\C)/\fwap)$ extending $g$. By construction (in particular, using (\ref{6.3})), we get\begin{equation}\label{6.4}
        \lim_n\lim_m (\tau_n\overline{g}) \left(\bslant{\tp(c'_m/\C)}{\fwap}\right) \neq \lim_m\lim_n (\tau_n\overline{g}) \left(\bslant{\tp(c'_m/\C)}{\fwap}\right)
    \end{equation} where both limits exist, which by Fact \ref{double limit} 
implies that the flow $(\aut(\C),S_X(\C)/\fwap)$ is not WAP, a contradiction.
\end{proof}

From the previous theorem and Theorem \ref{compatible implies absolute}, the following corollary follows immediately.

\begin{cor}
    The Ellis group of $S_X(\C)/\fwap$ (treated as a semitopological group with the $\tau$-topology) does not depend on the choice of $\C$ as long as $\C$ is at least $(\aleph_0+\lambda)^+$-saturated and strongly $(\aleph_0+\lambda)^+$-homogeneous.
\end{cor}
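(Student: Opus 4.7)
The plan is to deduce the corollary directly from Theorem \ref{fwapp and fwap} (compatibility of $\fwapp$ and $\fwap$) together with Theorem \ref{compatible implies absolute} (isomorphism of Ellis groups of compatible quotients). The only wrinkle is that these two results are stated for an elementary pair $\C\prec\C'$, whereas the corollary must compare two arbitrary (unrelated) sufficiently saturated monster models. The standard move is to pass through a common elementary extension.

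Given two models $\C_1$ and $\C_2$ of $T$, both $(\aleph_0+\lambda)^+$-saturated and strongly $(\aleph_0+\lambda)^+$-homogeneous, I would first produce a third model $\C^*$ of $T$ which is $(\aleph_0+\lambda)^+$-saturated and strongly $(\aleph_0+\lambda)^+$-homogeneous (in fact, we can take it much more saturated), and large enough to contain both $\C_1$ and $\C_2$ as elementary submodels. Such $\C^*$ exists by a routine amalgamation: take the elementary diagrams of $\C_1$ and $\C_2$ in disjoint copies of the expanded language, realize them jointly by compactness in some model of $T$, then iterate a standard saturation-and-homogenization construction to boost saturation and homogeneity.

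Now write $F^*_{\textrm{WAP}}$ for the counterpart of $\fwap$ computed in $\C^*$, and $F^{(i)}_{\textrm{WAP}}$ for the one computed in $\C_i$, $i=1,2$. Applying Theorem \ref{fwapp and fwap} to each elementary pair $\C_i\prec\C^*$ yields that $F^*_{\textrm{WAP}}$ is compatible with $F^{(i)}_{\textrm{WAP}}$. Theorem \ref{compatible implies absolute} then produces, for each $i$, a topological isomorphism between the Ellis group of $S_X(\C^*)/F^*_{\textrm{WAP}}$ and the Ellis group of $S_X(\C_i)/F^{(i)}_{\textrm{WAP}}$ (with the $\tau$-topologies). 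Composing one of these isomorphisms with the inverse of the other gives the required topological isomorphism between the Ellis groups attached to $\C_1$ and $\C_2$.

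Since both ingredient theorems have already been established in the paper, there is essentially no obstacle; the whole content is the amalgamation-through-a-common-extension trick. One remark on saturation bookkeeping: Theorem \ref{compatible implies absolute} needs only $\aleph_0$-saturation and strong $\aleph_0$-homogeneity of both models in the pair, so that step imposes no binding requirement on $\C^*$; the $(\aleph_0+\lambda)^+$ hypothesis is needed precisely (and only) so that Theorem \ref{fwapp and fwap} applies, which is why the same hypothesis is imposed on $\C_1$, $\C_2$, and $\C^*$.
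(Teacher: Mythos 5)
Your proposal is correct and matches the paper's argument: the corollary is obtained by combining Theorem \ref{fwapp and fwap} with Theorem \ref{compatible implies absolute}, which is exactly what you do. The only difference is that you spell out the routine passage through a common elementary extension of two arbitrary monster models (a step the paper leaves implicit, in the spirit of Remark \ref{remark: reduction to big C'}), and your saturation bookkeeping is accurate.
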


Similarly, and under the same saturation assumptions, for the equivalence relations $\ftaa$ and $\fta$ we have the following:

\begin{teor}\label{ftaa and fta}
The equivalence relations $\ftaa$ and $\fta$ are compatible as long as $\C$ and $\C'$ are at least $(\aleph_0+\lambda)^+$-saturated and strongly $(\aleph_0+\lambda)^+$-homogeneous.
\end{teor}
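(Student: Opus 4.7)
The plan is to imitate the proof of Theorem \ref{fwapp and fwap}, replacing the Grothendieck double limit criterion (Fact \ref{double limit}) by the independent-sequence characterization of tame functions (Definition \ref{defin: indep functons} together with Fact \ref{l_1 and indep seq}), and using Remark \ref{remark: transfer of WAP and tame}(2) to transfer tameness across flow homomorphisms. By the trick in Remark \ref{remark: reduction to big C'}, I may assume $\C$ is $\C'$-small.

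First, I would show $\fta\subseteq r[\ftaa]$ by proving $r[\ftaa]$ is a closed, $\aut(\C)$-invariant equivalence relation on $S_X(\C)$ whose quotient flow is tame. Closedness, transitivity (via strong heir extensions and Fact \ref{content and ellis semigroup}), and $\aut(\C)$-invariance (via lifting $\sigma\in\aut(\C)$ to $\sigma'\in\aut(\C')$) are proved exactly as in the WAP case. For tameness of the quotient, if $f\in C(S_X(\C)/r[\ftaa])$ were non-tame, then, using the homeomorphism $\tilde{r}\colon S_X(\C')/r^{-1}[r[\ftaa]]\to S_X(\C)/r[\ftaa]$, one would obtain a non-tame function on $S_X(\C')/r^{-1}[r[\ftaa]]$; since $r^{-1}[r[\ftaa]]\supseteq \ftaa$ and tameness passes to flow homomorphisms and to restrictions of the acting group, this contradicts tameness of $(\aut(\C'),S_X(\C')/\ftaa)$.

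For the reverse inclusion, I put $F:=\bigcap_{\sigma\in\aut(\C')}\sigma(r^{-1}[\fta])$ and aim to show $(\aut(\C'),S_X(\C')/F)$ is tame, whence $F\supseteq\ftaa$. Assuming not, pick $f\in C(S_X(\C')/F)$ which is not tame and let $\overline{f}:=f\circ\pi_F\in C(S_X(\C'))$. By Fact \ref{l_1 and indep seq} and density of realized types in $S_X(\C')$, there exist $(\sigma_n)_{n<\omega}\subseteq\aut(\C')$, reals $r<s$, and witnesses $c'_{P,M}\in X(\C')$ for each pair of finite disjoint $P,M\subset\omega$ such that $(\sigma_n\overline{f})(\tp(c'_{P,M}/\C'))<r$ for $n\in P$ and $>s$ for $n\in M$. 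The set $N:=\{c'_{P,M}\}\cup\{\sigma_n^{-1}(c'_{P,M})\}$ has cardinality at most $\aleph_0+\lambda$, so using $(\aleph_0+\lambda)^+$-saturation of $\C$ I can move $N$ into $\C$, and by strong $(\aleph_0+\lambda)^+$-homogeneity I can further adjust each $\sigma_n$ so that $\tau_n:=\sigma_n\!\!\upharpoonright_\C\in\aut(\C)$ while $\sigma_n^{-1}$ still agrees with the original map on $\{c'_{P,M}\}$.

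Now, as in the WAP proof, let $\mathcal{H}$ be the set of $N$-invariant types in $S_X(\C)$, and, using a strong heir extension in $\mathcal{L}_N$, define $h\colon\mathcal{H}\to\R$ by $h(p):=\overline{f}(p')$, where $p'$ is the unique $N$-invariant lift of $p$ to $S_X(\C')$. The continuity of $h$ and the analogue of the internal claim in the WAP proof, namely $p_i\fta p_j\Rightarrow h(p_i)=h(p_j)$, go through verbatim (the claim uses only $\aut(\C)$-invariance and closedness of $\fta$, the content--Ellis correspondence, and the definition of $F$). Tietze's extension theorem then yields $\overline{g}\in C(S_X(\C)/\fta)$ extending the factored map. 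Finally, the analogue of equation \ref{6.2} gives $(\tau_n\overline{g})(\bslant{\tp(c'_{P,M}/\C)}{\fta})=(\sigma_n\overline{f})(\tp(c'_{P,M}/\C'))$, so $(\tau_n\overline{g})_{n<\omega}$ is an independent sequence in $\{g\overline{g}:g\in\aut(\C)\}$ on $S_X(\C)/\fta$ with the same $r<s$ and witnesses $\bslant{\tp(c'_{P,M}/\C)}{\fta}$, contradicting tameness of $(\aut(\C),S_X(\C)/\fta)$.

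The main obstacle, minor relative to the WAP case, is the two-parameter indexing of witnesses: one must check that the countable set $N$ comprising $c'_{P,M}$ together with all $\sigma_n^{-1}(c'_{P,M})$ fits inside $\C$ after a single application of saturation and homogeneity, and that the simultaneous adjustment of $(\sigma_n)$ preserves all the independent-sequence inequalities. Since there are at most $\aleph_0$ such inequalities, this bookkeeping is handled by the assumed $(\aleph_0+\lambda)^+$-saturation and strong $(\aleph_0+\lambda)^+$-homogeneity, and no new ideas beyond those in Theorem \ref{fwapp and fwap} are required.
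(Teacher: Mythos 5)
Your proposal is correct and follows essentially the same route as the paper: reduce to the case where $\C$ is $\C'$-small, prove $\fta\subseteq r[\ftaa]$ by showing $r[\ftaa]$ is a closed, invariant equivalence relation with tame quotient, and prove the reverse inclusion by intersecting the $\aut(\C')$-translates of $r^{-1}[\fta]$ and transporting an independent sequence of witnesses $c'_{P,M}$ down to $\C$ via saturation, strong homogeneity, strong heir extensions, and Tietze's theorem. The paper's proof is literally organized as a substitution into the WAP argument with the double-limit data replaced by the independence data, which is exactly what you describe.
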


\begin{proof}
By an obvious analog of Remark \ref{remark: reduction to big C'}, without loos of generality we can assume that $\C$ is $\C'$-small.

    We first prove that $\fta\subseteq r[\ftaa]$. It suffices to show that $ r[\ftaa]$ is a closed, $\aut(\C)$-invariant equivalence relation on $S_X(\C)$ with tame quotient. The fact that $r[\ftaa]$ is a closed, $\aut(\C)$-invariant equivalence relation on $S_X(\C)$ follows by the same arguments as in the WAP context (see the proof Theorem \ref{fwapp and fwap}).
    
    To show that $S_X(\C)/r[\ftaa]$ is tame, suppose for a contradiction that there is a function $f \in C ( S_X(\C)/r[\ftaa] )$ which is not tame. That is, there is a sequence $(\sigma_i)_{i<\omega}\subset\aut(\C)$ such that $(\sigma_i f)_{i<\omega}$ is an independent sequence. 
    Then we apply the corresponding part of the proof of Theorem \ref{fwapp and fwap}, replacing ``WAP" by ``tame" and noticing that by construction the sequence $(\sigma'_if')_{i<\omega}$ is independent, which leads to a contradiction.

    Now, we prove $\fta\supseteq r[\ftaa]$. This is equivalent to $r^{-1}[\fta]\supseteq \ftaa$. We define a closed, $\aut(\C')$-invariant equivalence relation $$ F:= \bigcap_{\sigma \in \aut(\C')} \sigma(r^{-1}[\fta] ). $$ Then, it is enough to show that $F\supseteq \ftaa$, which is equivalent to the flow $$(\aut(\C'),  S_X(\C')/F)$$ being tame.

    Assume for a contradiction that there is $f\in C(S_X(\C')/F)$ which is not tame. Let $\overline{f}:S_X(\C')\to \R$ be given by $\overline{f}=f\circ \pi_F$. Then, $\overline{f}\in C( S_X(\C'))$ and it is not a tame function  by  Remark \ref{remark: transfer of WAP and tame}. That is, there exist $r<s\in \R$, $(\sigma_n)_{n<\omega}\subset\aut(\C')$ and $\{c'_{P,M}: P,M\subset_{\text{fin}} \omega \text{ disjoint} \}\subset X(\C')$ such that for any finite disjoint $P,M\subset \omega$ \begin{equation}\label{indep f}
 (\sigma_n\overline{f})(\tp(c'_{P,M}/\C'))<r \text{ if } n\in P \; \text{ and } \; (\sigma_n\overline{f})(\tp(c'_{P,M}/\C'))>s \text{ if } n\in M.
    \end{equation}
    The fact that we can choose $c'_{P,M}\in X(\C')$ follows from the fact that the types over $\C'$ of elements of $X(\C')$ form a dense subset of $S_X(\C')$ and the second part of Defnition \ref{defin: indep functons}.

    Consider the set $$N:=\{c'_{P,M}: P,M\subset_{\text{fin}} \omega \text{ disjoint} \}\cup \{\sigma_n^{-1}(c'_{P,M}): i<\omega,\;  P,M\subset_{\text{fin}} \omega \text{ disjoint} \}.$$ 
As in the proof of Theorem \ref{fwapp and fwap}, using $(\aleph_0+\lambda)$-saturation and strong $(\aleph_0+\lambda)^+$-homogeneity of $\C$ (together with strong $\lvert \C \rvert^{+}$-homogeneity of $\C'$), we may assume that $N\subset \C$ and, for every $n<\omega$, $\tau_n:=\sigma_n\!\!\upharpoonright_\C$ is an element of $\aut(\C)$.


    Then we apply the corresponding part of the proof of Theorem \ref{fwapp and fwap}, where the formulas (\ref{6.2}), (\ref{6.3}), and (\ref{6.4}) are replaced by:
\begin{equation} 
\begin{split}
(\sigma_n\overline{f})( \tp( c'_{P,M}/\C') ) = \overline{f}(\tp(\sigma_n^{-1}(c'_{P,M})/\C'))=\\ h(\tp(\sigma_n^{-1}(c'_{P,M})/\C))= (\tau_nh)( \tp( c'_{P,M}/\C) ),
\end{split}
\end{equation}
for any finite disjoint $P,M\subset \omega$ 
\begin{equation}
(\tau_nh)(\tp(c'_{P,M}/\C))<r \text{ if } n\in P \; \text{ and } \; (\tau_nh)(\tp(c'_{P,M}/\C))>s \text{ if } n\in M,
\end{equation} 
\begin{equation}
\begin{split}
(\tau_n\overline{g}) \left(\bslant{\tp(c'_{P,M}/\C)}{F_{\text{Tame}}}\right)<r \text{ if } n\in P, \; \text{ and } \\  (\tau_n\overline{g})\left(\bslant{\tp(c'_{P,M}/\C)}{F_{\text{Tame}}}\right)>s \text{ if } i\in M,
\end{split}
\end{equation} 
respectively. The last property 
implies that the flow $(\aut(\C),S_X(\C)/\fta)$ is not tame, a contradiction.
\end{proof}

Again, from the previous theorem and Theorem \ref{compatible implies absolute}, the following corollary follows immediately.

\begin{cor}
    The Ellis group of $S_X(\C)/\fta$ (treated as a semitopological group with the $\tau$-topology) does not depend on the choice of $\C$ as long as $\C$ is at least $(\aleph_0+\lambda)^+$-saturated and strongly $(\aleph_0+\lambda)^+$-homogeneous.
\end{cor}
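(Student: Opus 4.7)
The plan is to deduce this corollary directly from the two preceding theorems, namely Theorem~\ref{ftaa and fta} (compatibility of $\ftaa$ with $\fta$) and Theorem~\ref{compatible implies absolute} (compatibility implies topological isomorphism of Ellis groups). The argument should exactly mirror the WAP corollary that was just stated right before this one.

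First, I would set up the comparison between two arbitrary models $\C_1$ and $\C_2$, both $(\aleph_0+\lambda)^+$-saturated and strongly $(\aleph_0+\lambda)^+$-homogeneous. The natural move is to reduce to the case of an elementary extension by passing to a common ambient model: choose a monster $\C'$ with degree of saturation strictly greater than $|\C_1|+|\C_2|$ in which both $\C_1$ and $\C_2$ elementarily embed, so that without loss of generality $\C_1, \C_2 \prec \C'$ with $\C_1, \C_2$ being $\C'$-small. Note $\C'$ automatically satisfies the saturation hypotheses since it is even more saturated.

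Next, I would apply Theorem~\ref{ftaa and fta} twice: once to the pair $\C_1 \prec \C'$, obtaining that $F_{\text{Tame}}'$ (the relation computed for $\C'$) is compatible with $F_{\text{Tame}}^{(1)}$ (the one computed for $\C_1$), and once to $\C_2 \prec \C'$, obtaining that $F_{\text{Tame}}'$ is compatible with $F_{\text{Tame}}^{(2)}$. Then Theorem~\ref{compatible implies absolute} yields that the Ellis group (with the $\tau$-topology) of $S_X(\C')/F_{\text{Tame}}'$ is topologically isomorphic to both the Ellis group of $S_X(\C_1)/F_{\text{Tame}}^{(1)}$ and the Ellis group of $S_X(\C_2)/F_{\text{Tame}}^{(2)}$. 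Composing these two topological isomorphisms gives the required topological isomorphism between the Ellis groups computed for $\C_1$ and for $\C_2$.

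There is essentially no obstacle here since both ingredients are already in place; the only thing to double-check is the reduction step (that one may replace the original two models by elementary substructures of a common $\C'$), which is standard and is the same reduction used in Remark~\ref{remark: reduction to big C'} for the WAP case. The proof is therefore a one-line citation of Theorems~\ref{ftaa and fta} and~\ref{compatible implies absolute}, exactly parallel to the proof of the WAP analogue stated immediately before.
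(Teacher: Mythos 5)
Your proposal is correct and follows exactly the route the paper takes: the corollary is deduced immediately by combining Theorem~\ref{ftaa and fta} (compatibility of $\ftaa$ with $\fta$) with Theorem~\ref{compatible implies absolute}, and your explicit reduction of two arbitrary models $\C_1,\C_2$ to elementary substructures of a common, more saturated $\C'$ is the standard step the paper leaves implicit. Nothing is missing.
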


\section{Stable vs WAP, and NIP vs tame}\label{section: Stable vs WAP}

In this section, we will study the relationship between the equivalence relations $\fta$ and $\fwap$ and the finest $\emptyset$-type-definable equivalence relations on $X$ with NIP and stable quotients, respectively. Assume that $\C$ is $(\aleph_0+\lambda)$-saturated and strongly $(\aleph_0+\lambda)$-homogeneous (where $X\subseteq \C^\lambda)$.

Let $E$ be a $\emptyset$-type-definable equivalence relation on $X$. In \cite{10.1215/00294527-2022-0023}, we defined $\mathcal{F}_{X/E}$ as the family of all functions $f\colon X \times \mathfrak{C}^m \to \mathbb{R}$ which factor through $X/E\times\mathfrak{C}^m$ and can be extended to a continuous logic formula $\C^{\lambda} \times \C^m \to \mathbb{R}$ over $\emptyset$, where $m$ ranges over $\omega$. By \cite[Corollary 2.2]{10.1215/00294527-2022-0023}, we know that the quotient $X/E$ is stable if and only if every $f\in \mathcal{F}_{X/E}$ is stable. 

For $f(x,y)\in \mathcal{F}_{X/E}$ and $b\in \C^{\lvert y \rvert}$, by $f_b$ we denote the function $f_b\colon S_{X/E}(\C)\to \R$ given by $f_b(p):=f(a,b)$ for any $a/E'\models p$ (where $a \in \C' \succ \C$, $E':=E(\C')$, and $f$, being a restriction of a continuous logic formula, is treated as a function $X(\C') \times \C'^{|y|} \to \mathbb{R}$). Since $f$ is a restriction of a continuous logic formula, $f_b$ is continuous.

As mentioned in the introduction, the connection between stability and WAP is well-known (discovered in \cite{yaacov2014model, ben2016weakly}). This connection is still present for the hyperdefinable set $X/E$.

\begin{prop}\label{wap iff stable formula}
    Let $f(x,y)\in \mathcal{F}_{X/E}$, and let $b\in \C^{\lvert y \rvert}$. 
Then the following are equivalent:
    \begin{enumerate}
        \item $f(x,y)$ is stable.
        \item For all $b\in \C^{\lvert y \rvert}$ the function $f_b$ is WAP (for the flow $(\aut(\C),S_{X/E}(\C))$).
    \end{enumerate}
\end{prop}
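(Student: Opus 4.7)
The plan is to use Grothendieck's double limit criterion (Fact \ref{double limit}) on the dynamical side, together with the classical double limit characterization of stability of a continuous logic formula on the model-theoretic side. Namely, for $f(x,y)\in\mathcal{F}_{X/E}$, the stability of $f$ (in the sense of Definition \ref{definition: stability} applied to the instances of $f$) is equivalent to the condition that for any sequences $(a_n)_{n<\omega}$ in $X/E$ and $(c_n)_{n<\omega}$ in $\C^{|y|}$ (taken in a sufficiently saturated model), the iterated limits $\lim_n\lim_m f(a_m,c_n)$ and $\lim_m\lim_n f(a_m,c_n)$ agree whenever both exist; this is classical and follows by a Ramsey extraction of indiscernibles from any witnessing sequence. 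I shall also use two auxiliary facts: the set $X_0:=\{\tp(a/E'/\C): a\in X(\C)\}$ is dense in $S_{X/E}(\C)$ by saturation of $\C$, and the elementary identity $(\sigma f_b)(p) = f(a,\sigma(b))$ for $\sigma\in\aut(\C)$ and $p=\tp(a/E'/\C)$, which follows by extending $\sigma$ to an automorphism of a bigger monster $\C'\succ\C$ via homogeneity and using $\emptyset$-invariance of $f$.

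For $(1)\Rightarrow(2)$, I fix $b\in\C^{|y|}$ and consider sequences $(\sigma_n)\subseteq\aut(\C)$ and $(p_m)\subseteq X_0$ with $p_m=\tp(a_m/E'/\C)$ and $a_m\in X(\C)$. The identity above yields $(\sigma_n f_b)(p_m)=f(a_m,\sigma_n(b))$, so the WAP double limit condition of Fact \ref{double limit} for $f_b$ along these sequences reduces exactly to the double limit condition for $f$ applied to $(a_m)_{m<\omega}\subseteq X(\C)$ and $(\sigma_n(b))_{n<\omega}\subseteq\C^{|y|}$, which holds by stability of $f$. Hence $f_b$ is WAP.

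For $(2)\Rightarrow(1)$, I contrapose: assume $f$ is not stable and build some $b\in\C^{|y|}$ such that $f_b$ is not WAP. I pick sequences $(a_n^*)\subseteq X(\C^*)$ and $(c_n^*)\subseteq(\C^*)^{|y|}$ in a bigger monster $\C^*\succ\C$ witnessing the failure of the double limit, and by Ramsey extraction I may assume $(a_n^*,c_n^*)_{n<\omega}$ is indiscernible over $\emptyset$ and still witnesses the failure. Using $\aleph_0$-saturation of $\C$ iterated over $\omega$, I realize the EM-type of $(c_n^*)_{n<\omega}$ by a sequence $(c_n)_{n<\omega}\subseteq\C^{|y|}$, and then realize $(a_n)$ in some bigger monster $\C^{**}\succ\C$ so that $\tp((a_n,c_n)/\emptyset)=\tp((a_n^*,c_n^*)/\emptyset)$. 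Strong $\aleph_0$-homogeneity of $\C$ yields $\sigma_n\in\aut(\C)$ with $\sigma_n(c_0)=c_n$; setting $b:=c_0$ and $p_m:=\tp(a_m/E'/\C)\in S_{X/E}(\C)$, the identity $(\sigma_n f_b)(p_m)=f(a_m,c_n)$ exhibits a failure of the double limit condition for $f_b$, contradicting WAP of $f_b$ by Fact \ref{double limit}. The only subtle point I anticipate is the invocation of the classical double limit characterization of stability for a single continuous formula on the hyperdefinable set $X/E$, which is the continuous-logic analogue of Shelah's non-order property theorem; everything else is a straightforward unfolding of Grothendieck's criterion and of the action of $\aut(\C)$ on parameters.
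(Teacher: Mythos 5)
Your proposal is correct and follows essentially the same route as the paper: both directions rest on the identity $(\sigma f_b)(p)=f(a,\sigma(b))$, the density of realized types in $S_{X/E}(\C)$, Grothendieck's criterion (Fact \ref{double limit}), and the order-property characterization of stability. The only difference is one of packaging: where you cite the classical double-limit characterization of stability of a continuous formula for $(1)\Rightarrow(2)$, the paper argues the contrapositive and performs the Ramsey-type subsequence extraction by hand, which amounts to the same argument.
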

\begin{proof}
    $(1)\Rightarrow (2)$ If the function $f_b$ is not WAP, by Fact \ref{double limit}, there is a sequence $(a_n)_{n<\omega}\subset X$ and a sequence of automorphisms $(\sigma_m)_{m<\omega}\subset \aut(\C)$ such that $$\lim_m\lim_n f(a_n,\sigma_m(b))\neq \lim_n\lim_m f(a_n,\sigma_m(b)) .$$ Note that we are using that the realized types are dense in $S_X(\C)$.
 Assume that $\lim_m\lim_n f(a_n,\sigma_m(b))>\lim_n\lim_m f(a_n,\sigma_m(b))$. (The opposite case is analogous.)
Then for some real numbers $r<s$ we have $$\lim_m\lim_n f(a_n,\sigma_m(b))>s \text{ and } \lim_n\lim_m f(a_n,\sigma_m(b))<r.$$
 
Let us denote $\sigma_m(b)$ by $b_m$. It is clear that we can choose a subsequence $(a_i',b_i')_{i<\omega}$ from $(a_n,b_n)_{n<\omega}$ such that $f(a'_i,b'_j)>s$ whenever $i>j$ and $f(a'_i,b'_j)<r$ whenever $i<j$. That is, the sequence $(a_i',b_i')_{i<\omega}$ witnesses unstability of $f(x,y)$. 

    $(2)\Rightarrow (1)$ If $f(x,y)$ is unstable, we can find an indiscernible sequence $(a_i,b_i)_{i<\omega}$ with $a_i\in X$ and $b_i\in \C^{\lvert y \rvert}$ such that $f(a_i,b_j)\neq f(a_j,b_i)$ for some/all $i<j$. By indiscernibility, for each $i<\omega$ there is $\sigma_i \in \aut(\C)$ such that $\sigma_i(b_0)=b_i$ and there exist real numbers $r<s$ such that: ($f(a_i,b_j)=r<s=f(a_j,b_i)$ for all $i<j$) or ($f(a_i,b_j)=s>r=f(a_j,b_i)$ for all $i<j$). Hence, $f_{b_0}$ is not a WAP function by Fact \ref{double limit}.
\end{proof}

\begin{cor}\label{cor: stable iff wap}
    The flow $(\aut(\C), S_{X/E}(\C))$ is WAP if and only if $X/E$ is stable.
\end{cor}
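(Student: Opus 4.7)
The plan is to combine Proposition \ref{wap iff stable formula} with the characterization of stability of $X/E$ via continuous logic formulas from \cite[Corollary 2.2]{10.1215/00294527-2022-0023}, and then invoke the Stone--Weierstrass-type criterion encoded in Fact \ref{separating points: WAP}.

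For the easy direction $(\Rightarrow)$, assume that $(\aut(\C), S_{X/E}(\C))$ is WAP. Then every continuous real-valued function on $S_{X/E}(\C)$ is WAP; in particular $f_b$ is WAP for every $f(x,y) \in \mathcal{F}_{X/E}$ and every $b \in \C^{|y|}$. By Proposition \ref{wap iff stable formula}, each such $f(x,y)$ is stable, and then \cite[Corollary 2.2]{10.1215/00294527-2022-0023} gives that $X/E$ is stable.

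For the converse $(\Leftarrow)$, assume $X/E$ is stable. By \cite[Corollary 2.2]{10.1215/00294527-2022-0023}, every $f(x,y)\in \mathcal{F}_{X/E}$ is stable, and hence by Proposition \ref{wap iff stable formula}, the function $f_b \in C(S_{X/E}(\C))$ is WAP for every such $f$ and every parameter $b \in \C^{|y|}$. Let $\mathcal{A} := \{f_b : f \in \mathcal{F}_{X/E},\ b \in \C^{|y|},\ |y|<\omega\} \subseteq C(S_{X/E}(\C))$. By Fact \ref{separating points: WAP}, it suffices to check that $\mathcal{A}$ separates points of $S_{X/E}(\C)$; then the whole flow is WAP.

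To verify separation, take distinct $p,q \in S_{X/E}(\C)$; equivalently (by the description of $\tilde E$-classes recalled in Section \ref{subsection: model theory}), if $a/E' \models p$ and $b/E' \models q$ in a bigger model $\C'$, then $\tp(a/E'/\C) \neq \tp(b/E'/\C)$. By the standard description of complete types of hyperimaginaries through continuous logic (i.e., via bounded uniformly continuous functions factoring through the quotient), there exist $f(x,y)\in \mathcal{F}_{X/E}$ and $c\in \C^{|y|}$ with $f(a,c)\neq f(b,c)$, i.e., $f_c(p)\neq f_c(q)$. Hence $\mathcal{A}$ separates points, and we conclude that $(\aut(\C), S_{X/E}(\C))$ is WAP.

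The only step that is not completely formal is the separation claim for $\mathcal{A}$; the main obstacle is to know that complete types over $\C$ of hyperimaginaries in $X/E$ are determined by the values of the functions in $\mathcal{F}_{X/E}$ evaluated on parameters from $\C$. This is a standard feature of the continuous logic treatment of hyperimaginary sorts and is essentially what is used in \cite{10.1215/00294527-2022-0023} to set up $\mathcal{F}_{X/E}$ in the first place, so no new work is required here.
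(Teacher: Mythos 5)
Your proof is correct and follows essentially the same route as the paper: both directions reduce to Proposition \ref{wap iff stable formula} together with the characterization of stability of $X/E$ via $\mathcal{F}_{X/E}$, and the converse direction concludes via Fact \ref{separating points: WAP}. The one step you flag as informal --- that $\{f_b : f \in \mathcal{F}_{X/E},\ b \in \C^{|y|}\}$ separates points of $S_{X/E}(\C)$ --- is exactly \cite[Proposition 2.1]{10.1215/00294527-2022-0023}, which is the reference the paper invokes at that point, so there is no gap.
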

\begin{proof}
    $(\Rightarrow)$ By assumption, for any $f(x,y)\in\mathcal{F}_{X/E}$ and $b\in\C^{\lvert y\rvert}$, $f_b$ is WAP, so $f(x,y)$ is stable by Proposition \ref{wap iff stable formula}. Hence, $X/E$ is stable by \cite[Corollary 2.2]{10.1215/00294527-2022-0023}.
    
    $(\Leftarrow)$ By assumption and \cite[Corollary 2.2]{10.1215/00294527-2022-0023}, every function $f(x,y)\in\mathcal{F}_{X/E}$ is stable, so for any $b\in \C^{\lvert y \rvert}$ the function $f_b$ is WAP by Proposition \ref{wap iff stable formula}. Thus, since by \cite[Proposition 2.1]{10.1215/00294527-2022-0023} the family of functions $\{ f_b: f\in \mathcal{F}_{X/E},b\in \C^{\lvert y \rvert}  \}$ separates points in $S_{X/E}(\C)$, we conclude that $(\aut(\C), S_{X/E}(\C))$ is WAP by Fact \ref{separating points: WAP}.
\end{proof}


Similarly, by \cite[Lemma 5.7]{fernández2024ndependent}, we know that the quotient $X/E$ has NIP if and only if every $f\in \mathcal{F}_{X/E}$ has NIP (see \cite[Definition 5.4]{fernández2024ndependent} for $n=1$). As mentioned in the introduction, the connection between NIP and tameness flows is well-known (first noticed independently in \cite{definably_Amenable_nip}, \cite{ibarlucia2016dynamical}, and \cite{Khanaki2020-KHASTN}). This connection is still present for the hyperdefinable set $X/E$.

\begin{prop}\label{tame iff nip formula}
    Let $f(x,y)\in \mathcal{F}_{X/E}$, and let $b\in \C^{\lvert y \rvert}$. 
Then the following are equivalent:
    \begin{enumerate}
        \item $f(x,y)$ has NIP.
        \item For all $b\in \C^{\lvert y \rvert}$ the function $f_b$ is tame.
    \end{enumerate}
\end{prop}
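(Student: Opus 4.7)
The plan is to adapt the argument of Proposition \ref{wap iff stable formula}, replacing the double-limit characterization of WAP (Fact \ref{double limit}) by the independent-sequence characterization of tame (Definition \ref{defin: indep functons} and Fact \ref{l_1 and indep seq}), and instability by IP. Throughout, I would use the identity $(\sigma f_b)(\tp(a/E)) = f(a, \sigma(b))$ for $a\in X$ and $\sigma\in\aut(\C)$ (valid because $f$ factors through $X/E\times \C^m$ and is $\emptyset$-definable), together with the fact that realized types $\tp(a/E)$ with $a\in X$ are dense in $S_{X/E}(\C)$; by the second clause of Definition \ref{defin: indep functons}, independence of a sequence $(\sigma_n f_b)_{n<\omega}$ can therefore be tested on realized types alone.

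For $(1)\Rightarrow(2)$ I would argue by contrapositive. If some $f_b$ is not tame, there exist reals $r<s$ and $(\sigma_n)_{n<\omega}\subset\aut(\C)$ such that $(\sigma_n f_b)_{n<\omega}$ is independent; transcribing this via the identity above and the density remark, for every pair of finite disjoint $P,M\subset\omega$ I obtain $a_{P,M}\in X$ with $f(a_{P,M},\sigma_n(b))<r$ for $n\in P$ and $f(a_{P,M},\sigma_n(b))>s$ for $n\in M$. Setting $b_n:=\sigma_n(b)$ gives a VC-style shattering pattern for $f(x,y)$ on $(b_n)$. A standard Ramsey-plus-compactness extraction (Erdős-Rado applied to the countable sequence $(b_n)$, followed by a compactness argument to realize the alternation element) then produces an indiscernible sequence $(b'_i)_{i<\omega}$ and an element $d\in X$ with $f(d,b'_{2i})<r$ and $f(d,b'_{2i+1})>s$ for all $i$, witnessing that $f$ has IP.

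For $(2)\Rightarrow(1)$, again by contrapositive, assume $f$ has IP. Then there exist reals $r<s$, an indiscernible sequence $(b_i)_{i<\omega}$, and $d\in X$ such that $f(d,b_{2i})<r$ and $f(d,b_{2i+1})>s$ for all $i$. Indiscernibility of $(b_i)$ combined with compactness yields, for every pair of finite disjoint $P,M\subset\omega$, an element $a_{P,M}\in X$ with $f(a_{P,M},b_i)<r$ for $i\in P$ and $f(a_{P,M},b_i)>s$ for $i\in M$: the relevant partial type in the $a$-variables is consistent because each finite fragment can be matched via indiscernibility to the pattern given by $d$ on an appropriate subinterval of $(b_i)$. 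Using strong homogeneity of $\C$, pick $\sigma_i\in\aut(\C)$ with $\sigma_i(b_0)=b_i$. Then $(\sigma_i f_{b_0})(\tp(a_{P,M}/E))=f(a_{P,M},b_i)$, so $(\sigma_i f_{b_0})_{i<\omega}$ is independent (witnessed by the realized types $\tp(a_{P,M}/E)$), showing that $f_{b_0}$ is not tame.

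The main obstacle is the Ramsey-plus-compactness step in the first direction, which is the continuous-logic analog of the classical fact that any VC-style shattering for a formula can be refined to an indiscernible witness of IP; this is standard but is where all the real work sits. The second direction is essentially a reindexing via homogeneity, completely parallel to the WAP/stability argument.
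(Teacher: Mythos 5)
Your proposal is correct and follows essentially the same route as the paper: both directions hinge on the identity $(\sigma f_b)(\tp(a/E))=f(a,\sigma(b))$, density of realized types, and transporting an independence pattern between $\{\sigma f_b\}$ and $\{f(x,b_i)\}$ via homogeneity/indiscernibility. The only cosmetic difference is that you route both implications through the indiscernible-alternation characterization of IP (the paper's Lemma~\ref{even odd nip}), whereas the paper's own proof of $(1)\Rightarrow(2)$ stops at ``$(f(x,\sigma_i(b)))_{i<\omega}$ is independent on $X$, hence $f$ has IP by compactness''; the underlying Ramsey-plus-compactness content is the same.
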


\begin{proof}
    $(1)\Rightarrow (2)$ If $f_b$ is not tame for some $b\in \C^{\lvert y \rvert}$, then there is a sequence $(\sigma_i)_{i<\omega}\subset \aut(\C)$ such that the sequence of functions $(f(x,\sigma_i(b)))_{i<\omega}$ is independent on $X$, so $f$ has IP by compactness. 

    $(2)\Rightarrow (1)$ 
If $f(x,y)$ has $IP$, then by Ramsey's theorem and compactness, there is an indiscernible sequence $(b_i)_{i<\omega}\subset \C^{\lvert y \rvert}$  for which the sequence $(f(x,b_i))_{i<\omega}$ is independent on $X$.
By indiscernibility, for each $i<\omega$ there is $\sigma_i \in \aut(\C)$ such that $\sigma_i(b_0)=b_i$. Hence, $f_{b_0}$ is not a tame function, because the sequence $(\sigma_i f_{b_0})_{i<\omega}$ is independent.
\end{proof}

\begin{cor}\label{cor: nip iff tame}
    The flow $(\aut(\C), S_{X/E}(\C))$ is tame if and only if $X/E$ is NIP.
\end{cor}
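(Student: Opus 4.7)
The plan is to mimic the proof of Corollary \ref{cor: stable iff wap} verbatim, replacing stability by NIP, WAP by tame, Proposition \ref{wap iff stable formula} by Proposition \ref{tame iff nip formula}, Fact \ref{separating points: WAP} by Fact \ref{separating points: tame}, and \cite[Corollary 2.2]{10.1215/00294527-2022-0023} by \cite[Lemma 5.7]{fernández2024ndependent}. The ingredients are all already assembled.

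For the forward direction, I would assume $(\aut(\C),S_{X/E}(\C))$ is tame. Then for every $f(x,y) \in \mathcal{F}_{X/E}$ and every $b \in \C^{|y|}$ the function $f_b$ is tame (as a continuous function on a subflow, which is automatic from tameness of the whole flow). By Proposition \ref{tame iff nip formula}, each such $f(x,y)$ has NIP, so by \cite[Lemma 5.7]{fernández2024ndependent}, $X/E$ has NIP.

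For the reverse direction, assume $X/E$ has NIP. Then by \cite[Lemma 5.7]{fernández2024ndependent}, every $f \in \mathcal{F}_{X/E}$ has NIP, so by Proposition \ref{tame iff nip formula}, $f_b$ is tame for every $b \in \C^{|y|}$. The family $\{f_b : f \in \mathcal{F}_{X/E}, b \in \C^{|y|}\}$ separates points in $S_{X/E}(\C)$ by \cite[Proposition 2.1]{10.1215/00294527-2022-0023}. Applying Fact \ref{separating points: tame}, we conclude that $(\aut(\C),S_{X/E}(\C))$ is tame.

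There is essentially no obstacle here: everything is already proved. The only minor point worth double-checking is that the separating family lemma from \cite{10.1215/00294527-2022-0023} is stated for the context we use (it is, as it was also invoked in the proof of Corollary \ref{cor: stable iff wap}), and that Fact \ref{separating points: tame} applies with $\mathcal{A}$ being an arbitrary point-separating family rather than a subalgebra — this is fine because passing to the closed unital $G$-subalgebra generated by $\mathcal{A}$ keeps point-separation and, by Fact \ref{tame closed unital}, keeps tameness, so the Stone-Weierstrass step then forces every $f\in C(S_{X/E}(\C))$ to be tame. Thus the proof is a direct parallel of the WAP/stable case and requires no new arguments.
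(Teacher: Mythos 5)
Your proposal is correct and is essentially identical to the paper's own proof: both directions use Proposition \ref{tame iff nip formula} together with \cite[Lemma 5.7]{fernández2024ndependent}, and the reverse direction invokes \cite[Proposition 2.1]{10.1215/00294527-2022-0023} for point separation and Fact \ref{separating points: tame} exactly as you describe. No gaps.
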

\begin{proof}
    $(\Rightarrow)$ By assumption, for any $f(x,y)\in\mathcal{F}_{X/E}$ and $b\in\C^{\lvert y\rvert}$, $f_b$ is tame, so $f(x,y)$ has NIP by Proposition \ref{tame iff nip formula}. Hence, $X/E$ has NIP by \cite[Lemma 5.7]{fernández2024ndependent}.
    
    $(\Leftarrow)$ By assumption and \cite[Lemma 5.7]{fernández2024ndependent}, every function $f(x,y)\in\mathcal{F}_{X/E}$ has NIP, so for any $b\in \C^{\lvert y \rvert}$ the function $f_b$ is tame by Proposition \ref{tame iff nip formula}. Thus, since by \cite[Proposition 2.1]{10.1215/00294527-2022-0023} the family of functions $\{ f_b: f\in \mathcal{F}_{X/E},b\in \C^{\lvert y \rvert}  \}$ separates points in $S_{X/E}(\C)$, we conclude that $(\aut(\C), S_{X/E}(\C))$ is tame by Fact \ref{separating points: tame}.
\end{proof}

Below we will use the notation introduced in Proposition \ref{remark: pist and piNIP} and the comments following it. Note that, while a $\emptyset$-type-definable equivalence relation on $X$ induces a closed, $\aut(\C)$-invariant equivalence relation on $S_X(\C)$, the converse is not true. Namely, not every closed, $\aut(\C)$-invariant equivalence relation on $S_X(\C)$ is created this way. In the case of $\Tilde{E}^{\textrm{WAP}}_{\emptyset}$ and $\Tilde{E}^{\textrm{NIP}}_{\emptyset}$, by Corollaries \ref{cor: stable iff wap} and \ref{cor: nip iff tame}, we get that $S_X(\C)/\Tilde{E}^{\textrm{st}}_{\emptyset}$ is a WAP flow and $S_X(\C)/\Tilde{E}^{\textrm{NIP}}_{\emptyset}$ is a tame flow. However, the next proposition shows that $\Tilde{E}^{\textrm{st}}_{\emptyset}$ may not be equal to $\fwap$ and $\Tilde{E}^{\textrm{NIP}}_{\emptyset}$  may not be equal to $\fta$.

\begin{prop}\label{fwap strictly finer}
    Assume that $X$ is a $\emptyset$-type-definable subset of $\C^\lambda$, and $\C$ is $(\aleph_0+\lambda)$-saturated and strongly $(\aleph_0+\lambda)$-homogeneous.
    \begin{enumerate}
        \item If $X$ is unstable, then $\fwap\subsetneq \Tilde{E}^{\textrm{st}}_{\emptyset}$.
        \item If $X$ has IP, then $\fta\subsetneq \Tilde{E}^{\textrm{NIP}}_{\emptyset}$.
    \end{enumerate}
\end{prop}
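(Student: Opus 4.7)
The containments $\fwap \subseteq \tilde{E}^{\textrm{st}}_\emptyset$ and $\fta \subseteq \tilde{E}^{\textrm{NIP}}_\emptyset$ come for free. Indeed, $S_X(\C)/\tilde{E}^{\textrm{st}}_\emptyset$ coincides (as an $\aut(\C)$-flow) with $S_{X/E^{\textrm{st}}_\emptyset}(\C)$, which is WAP by Corollary \ref{cor: stable iff wap} (since $X/E^{\textrm{st}}_\emptyset$ is stable), and similarly $S_X(\C)/\tilde{E}^{\textrm{NIP}}_\emptyset$ is tame by Corollary \ref{cor: nip iff tame}. The minimality of $\fwap$ and $\fta$ among relations with such quotients gives the desired inclusions.

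For the strict inclusion in (1), the plan is to argue by contradiction. Assume $\fwap = \tilde{E}^{\textrm{st}}_\emptyset$. Then the closed unital $\aut(\C)$-subalgebras of $C(S_X(\C))$ corresponding to these two relations via Fact \ref{fact: relations vs algebras} coincide; the first is precisely the algebra $\mathrm{WAP}(S_X(\C))$ (by the proof of Corollary \ref{corollary: finest with WAP/tame quotient}), so every WAP function on $S_X(\C)$ must be constant on $\tilde{E}^{\textrm{st}}_\emptyset$-classes. On the other hand, because $X$ is unstable but $X/E^{\textrm{st}}_\emptyset$ is stable, $E^{\textrm{st}}_\emptyset$ is strictly coarser than equality on $X$, so one can pick distinct $a, b \in X$ with $a\, E^{\textrm{st}}_\emptyset\, b$, and an index $i<\lambda$ with $a_i \neq b_i$.

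Now consider $\varphi(x,y) := (x_i = y)$ with $y$ a single variable. Being an equality formula, $\varphi$ is stable, so by Proposition \ref{wap iff stable formula} (applied with $E$ being equality on $X$, in which case $\mathcal{F}_{X/E}=\mathcal{F}_X$ contains all characteristic functions of first-order definable sets), the function $f_{b_i} \colon S_X(\C) \to \R$ given by $f_{b_i}(p) = \chi_{[\varphi(x,b_i)]}(p)$ is WAP. However $\tp(a/\C)\, \tilde{E}^{\textrm{st}}_\emptyset\, \tp(b/\C)$ while $f_{b_i}(\tp(a/\C)) = 0 \neq 1 = f_{b_i}(\tp(b/\C))$, contradicting the $\tilde{E}^{\textrm{st}}_\emptyset$-invariance established above.

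For (2) the same argument works verbatim after replacing WAP by tame, stable by NIP, $\fwap$ by $\fta$, $\tilde{E}^{\textrm{st}}_\emptyset$ by $\tilde{E}^{\textrm{NIP}}_\emptyset$, and invoking Proposition \ref{tame iff nip formula} in place of Proposition \ref{wap iff stable formula}; the formula $x_i = y$ is NIP because it is stable. There is essentially no obstacle here: the whole point is that the equality formula is always stable/NIP irrespective of the theory, and it separates distinct elements of $X$ by design, so it can never be invariant under a non-trivial equivalence relation on $X$. The only small care needed is the verification that first-order formulas yield genuine members of $\mathcal{F}_X$ and hence that Proposition \ref{wap iff stable formula}/\ref{tame iff nip formula} applies to them.
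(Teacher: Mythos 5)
Your proof is correct, and the underlying idea is the same as the paper's: equality is a stable formula, so equality-based data on $S_X(\C)$ is WAP [resp.\ tame], which prevents $\fwap$ [resp.\ $\fta$] from gluing distinct realized types, whereas $\tilde{E}^{\textrm{st}}_\emptyset$ [resp.\ $\tilde{E}^{\textrm{NIP}}_\emptyset$] must glue some realized pair because $X$ is unstable [resp.\ has IP]. The implementations differ in packaging: you exhibit a single WAP function, namely the characteristic function of the clopen set $[x_i=b_i]$, obtained by feeding the stable formula $x_i=y$ into Proposition \ref{wap iff stable formula} with $E$ taken to be equality (so $S_{X/E}(\C)=S_X(\C)$), and then use that $\fwap=\Psi(\mathrm{WAP}(S_X(\C)))$ separates any two points distinguished by a WAP function. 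The paper instead passes to the whole reduct flow: it defines $pEq\iff p_==q_=$, identifies $S_X(\C)/E$ with a subflow of the type space of the empty-language reduct, applies Corollary \ref{cor: stable iff wap} to that stable reduct together with closure of WAP under subflows and shrinking the acting group, and concludes $\fwap\subseteq E$. The paper's route thus yields the marginally stronger statement that $\fwap$ refines the relation of having the same quantifier-free equality type, i.e.\ separates \emph{all} pairs of distinct realized types, while yours separates only the one pair needed; both suffice. The only point requiring care in your version is the one you flag, that the $\{0,1\}$-valued function of a first-order formula is a genuine member of $\mathcal{F}_X$ so that Propositions \ref{wap iff stable formula} and \ref{tame iff nip formula} apply; this is indeed routine, and the verification that $x_i=y$ is stable (hence NIP) is elementary.
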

\begin{proof}
    The inclusions follow from the above observations that $S_X(\C)/\Tilde{E}^{\textrm{st}}_{\emptyset}$ is WAP and $S_X(\C)/\Tilde{E}^{\textrm{NIP}}_{\emptyset}$ is tame. It remains to show that $\fwap \neq \Tilde{E}^{\textrm{st}}_{\emptyset} $ and $\fta\neq \Tilde{E}^{\textrm{NIP}}_{\emptyset}$. We will prove the first thing; the proof of the second one is analogous.

Since $X$ is unstable, $E^{\textrm{st}}_\emptyset \ne  \;\,=$, so  $\tilde{E}^{\textrm{st}}_\emptyset$ glues some types in $S_X(\C)$ which are realized in $\C$.

On the other hand, define a closed equivalence relation $E$ on $S_X(\C)$ by $$pEq \iff p_= = q_=,$$ where $p_=$ and $q_=$ denote the restrictions of $p$ and $q$ to the empty language (so we allow only the equality relation). Let $S^=_{X}(\C)$ be the collection of all global types in the empty language of the elements from $X(\C')$, where $\C' \succ \C$ is a monster model of the original theory in which $\C$ is small. Then $S^=_{X}(\C) = \{\tp(a/\C)_=: a \in X\} \cup \{\textrm{the unique non-realized type}\}$ is a closed subset of $S^=_{\C}(\C)$ invariant under $\aut(\C)$. We also see that $S_X(\C)/E \cong S^=_{X}(\C)$ as $\aut(\C)$-flows.
As the theory of $\C$ in the empty language is stable, by Corollary \ref{cor: stable iff wap}, we get that $(\Sym(\C), S^=_{\C}(\C))$ is WAP. Hence, since WAP  is closed under decreasing the acting group and under taking subflows, $(\aut(\C), S^=_{X}(\C))$ is also WAP, and so is $(\aut(\C),S_X(\C)/E)$. Therefore, $\fwap \subseteq E$. Thus, since $E$ does not glue any realized types in $S_X(\C)$, neither does $\fwap$.

By the conclusions of the last two paragraphs, we conclude that $\fwap \ne \tilde{E}^{\textrm{st}}_\emptyset$.
\end{proof}

Although $\fwap$ and $\fta$ are almost always strictly finer than $\Tilde{E}^{\textrm{st}}_\emptyset$ and $\Tilde{E}^{\textrm{NIP}}_\emptyset$, the following question and its analog for the tame case remain open:

\begin{question}
    Are the Ellis groups of the flows $$(\aut(\C), S_X(\C)/\fwap)$$ and $$(\aut(\C), S_X(\C)/\Tilde{E}^{\textrm{st}}_\emptyset)$$ isomorphic?
\end{question}

Proposition \ref{fwap strictly finer} justifies our interest in $\fwap$ and $\fta$, because it suggests that the quotients by these equivalence relations should capture more information about the theory in question than the quotients by $\Tilde{E}^{\textrm{st}}_\emptyset$ and $\Tilde{E}^{\textrm{NIP}}_\emptyset$ while maintaining similar good properties (having in mind that WAP is a dynamical version of stability and tameness a dynamical version of NIP).

\appendix
\section{A proof of Proposition \ref{delta is homeomorphism}}\label{Appendix}

To prove Proposition \ref{delta is homeomorphism}, we need Lemma 3.11 from the first arXiv version of \cite{Krupiski2019RamseyTA}. The proof is repeated in \cite[Fact 5.15]{chernikov2024definable}. 

\begin{lema}\label{tau closure equivalence}
    For any flow $(G,X)$ and $A\subseteq u\mathcal{M}$, the $\tau$-closure $\cl_\tau(A)$ can be described as the set of all limits contained in $u\mathcal{M}$ of the nets $\eta_i a_i$ such that $\eta_i\in\mathcal{M}$, $a_i\in A$, and $\lim_i \eta_i =u$
\end{lema}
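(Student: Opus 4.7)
My plan is to prove the equality by showing both inclusions, using the algebraic structure of $\mathcal{M}$ for one direction and density of $\{\pi_g : g \in G\}$ in $E(X)$ for the other.

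\emph{Forward inclusion} ($\cl_\tau(A) \subseteq$ the right-hand side): Suppose $b \in \cl_\tau(A) = u\mathcal{M} \cap (u \circ A)$, so $b \in u\mathcal{M}$ and there exist nets $g_i \in G$, $a_i \in A$ with $\pi_{g_i} \to u$ in $E(X)$ and $g_i a_i \to b$. Set $\eta_i := \pi_{g_i} \circ u$. Since $\mathcal{M}$ is a left ideal containing $u$, we have $\eta_i \in E(X) \cdot \mathcal{M} \subseteq \mathcal{M}$. Using that the map $\eta \mapsto \eta \circ u$ is continuous on $E(X)$ (pointwise continuity of composition in the left factor), $\eta_i \to u \circ u = u$. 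The key algebraic point is that $u$ acts as the identity on $u\mathcal{M}$: for any $a = u\xi \in u\mathcal{M}$, $u \circ a = u^2\xi = u\xi = a$. Applied to $a_i \in A \subseteq u\mathcal{M}$, this gives $u \circ a_i = a_i$, so $\eta_i a_i = \pi_{g_i} \circ u \circ a_i = \pi_{g_i} \circ a_i = g_i a_i \to b$, completing this direction.

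\emph{Reverse inclusion}: Let $b \in u\mathcal{M}$ be a limit of $\eta_i a_i$ with $\eta_i \in \mathcal{M}$, $a_i \in A$, $\eta_i \to u$. It suffices to exhibit $b \in u \circ A$. By density of $\{\pi_g : g \in G\}$ in $E(X)$, for each $i$ choose a net $(g_{i,\alpha})_\alpha$ in $G$ with $\pi_{g_{i,\alpha}} \to \eta_i$; again by continuity of $\eta \mapsto \eta \circ a_i$, we also have $g_{i,\alpha} a_i = \pi_{g_{i,\alpha}} \circ a_i \to \eta_i \circ a_i = \eta_i a_i$. Thus $\lim_\alpha \pi_{g_{i,\alpha}} = \eta_i$ with $\eta_i \to u$, and $\lim_\alpha g_{i,\alpha} a_i = \eta_i a_i$ with $\eta_i a_i \to b$. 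Applying the Moore--Smith iterated-limit theorem to the joint double net in the compact space $E(X) \times E(X)$ produces a combined subnet indexed by some $\beta$ with $\pi_{g_\beta} \to u$ and $g_\beta a_{i(\beta)} \to b$ simultaneously. Since $a_{i(\beta)} \in A$, this gives $b \in u \circ A$.

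\emph{Main obstacle}: The only substantive issue is in the reverse direction, namely arranging the combined net so that both convergences hold at once. This is standard net-theoretic bookkeeping — one can do it either via Moore--Smith on a directed product of index sets or, more hands-on, by indexing over triples $(i, F, V)$ with $F \subseteq X$ a finite set containing the relevant point $a_i$ and $V$ a basic pointwise neighborhood of $\eta_i$ in $E(X)$ constraining the values on $F$. No new input is required beyond density of $\{\pi_g : g \in G\}$ in $E(X)$ and the continuity of composition in the left factor.
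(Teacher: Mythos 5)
Your proof is correct. The paper states this lemma without its own proof (it is quoted from the first arXiv version of \cite{Krupiski2019RamseyTA}, with the proof repeated in \cite{chernikov2024definable}), and your argument --- the forward inclusion by replacing $\pi_{g_i}$ with $\eta_i:=\pi_{g_i}\circ u\in\mathcal{M}$, using continuity of right multiplication by $u$ and the fact that $u$ is a left identity on $u\mathcal{M}$; the reverse inclusion by approximating each $\eta_i$ by elements of $G$ via density of $\{\pi_g:g\in G\}$ in $E(X)$ and assembling a diagonal net via the iterated-limit theorem in $E(X)\times E(X)$ --- is exactly the standard argument from those references.
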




\begin{proof}[\textbf{Proof of Proposition \ref{delta is homeomorphism}}]
We first show that $\delta$ is continuous. Consider an arbitrary subbasic closed set $F_{\bar{\varphi}, \bar{p},\bar{q}, r}$ (see Definition \ref{definition: ipp-topology}) and take $\eta \in \cl_\tau(\delta^{-1}[F_{\bar{\varphi}, \bar{p},\bar{q}, r}])$. Choose some nets $(\sigma_i)$ in $\aut(\C)$ and $(\eta_i)$ in $\delta^{-1}[F_{\bar{\varphi}, \bar{p},\bar{q}, r}]$ such that $\lim \sigma_i = u$ and $\lim \sigma_i\eta_i= \eta$. 

Suppose for a contradiction that $ \eta \notin \delta^{-1}[F_{\bar{\varphi}, \bar{p},\bar{q}, r}]$, i.e. $\neg R_{\bar{\varphi}, r}(\eta(\bar{p}),\bar q)$. Then there is $b \models r(y)$ such that $$\varphi_1(x,b) \in \eta(p_1), \dots, \varphi_m(x,b) \in \eta(p_m)$$ and $$\varphi_{m+1}(x,b) \in q_1, \dots, \varphi_{m+n}(x,b) \in q_{n}.$$ 
Then there exists $i_0$ such that for all $i>i_0$ and $j \leq m$ we have $\varphi_i(x,b) \in \sigma_i \eta_i (p_j)$, i.e., $\varphi_i(x,\sigma_i^{-1}(b)) \in \eta_i (p_j)$. On the other hand, since $ \lim \sigma_i =u$, $ \delta(u)=\text{Id}_{\bar J}$, $q_j\in \bar J$, and $\varphi_{m+j}(x,b) \in q_j$ for $j \leq n$, we get that there exists $i_1$ such that for all $i>i_1$ and $j \leq n$ we have $\varphi_{m+j}(x,b) \in \sigma_i(q_j)$, i.e., $\varphi_{m+j}(x,\sigma_i^{-1}(b)) \in q_j$. Then for every $i>\max(i_0,i_1)$ we have $\neg R_{\bar \varphi, r}(\eta_i(\bar p),\bar q)$, a contradictions with the choice of $\eta_i$.

Next, we show the continuity of $\delta^{-1}$. 
Consider an arbitrary $\eta \in u\mathcal{M}$ and all open neighborhoods of $\delta(\eta)$ of the form $U_{\bar{\varphi}, \bar{p},\bar{q}, r}$, where $U_{\bar{\varphi}, \bar{p},\bar{q}, r}$ is the complement of $F_{\bar{\varphi}, \bar{p},\bar{q}, r}$. (This is a basis of open neighborhoods of $\delta(\eta)$, but we will not use it.)
Let $I$ consist of all tuples $(\bar{\varphi}, \bar{p},\bar{q}, r, b)$, where the tuples $(\bar{\varphi}, \bar{p},\bar{q}, r)$ are as in Definition \ref{definition: ipp-topology} and $b$ is any tuple realizing $r$ and such that
$$\varphi_1(x,b) \in \delta(\eta)(p_1),\dots, \varphi_m(x,b) \in \delta(\eta)(p_m),
\varphi_{m+1}(x,b) \in q_1,\dots, \varphi_{m+n}(x,b) \in q_n.$$

Order $I$ by:
$(\bar{\varphi}(x,y), \bar{p},\bar{q}, r(y),b) \leq (\bar{\varphi}'(x,z), \bar{p}',\bar{q}', r'(z),b')$ if $y \subseteq z$, $r' |_y=r$, $b'|_y=b$, $m:=|\bar{p}| \leq |\bar{p}'|=:m'$ and $n:=|\bar{q}| \leq |\bar {q}'|=:n'$, and there exists an injection $$\sigma \colon \{1,\dots,m+n\} \to \{1,\dots,m'+n'\}$$ such that $\sigma[\{1,\dots,m\}] \subseteq \{1,\dots,m'\}$, $\sigma[\{m+1,\dots,m+n\}] \subseteq \{m'+1,\dots,m'+n'\}$, and:
\begin{enumerate}
    \item $\varphi_j = \varphi'_{\sigma(j)}$  for all $j \leq m+n$,
    \item $p_j = p'_{\sigma(j)}$ for all $j \leq m$,
    \item $q_{m+j}=q'_{\sigma(m+j)}$ for all $j \leq n$.
\end{enumerate}
It is easy to check that $(I, \leq)$ is a directed set. For $i =(\bar{\varphi}, \bar{p},\bar{q}, r, b) \in I$ by $U_i$ we mean $U_{\bar{\varphi}, \bar{p},\bar{q}, r}$. Clearly $i \leq i'$ implies $U_{i'} \subseteq U_i$.

It suffices to show that for any net $(\eta_k)_{k \in K}$ in $u\mathcal{M}$ such that $\textrm{ipp}$-$\lim_k \delta(\eta_k) = \delta(\eta)$ we have $\tau$-$\lim_k \eta_k =\eta$. For that it is enough to show that for any subnet $(\rho_j)_{j \in J}$ of $(\eta_k)_{k \in K}$ we have that $\eta$ is in the $\tau$-closure of $(\rho_j)_{j \in J}$.

Since $\textrm{ipp}$-$\lim_j \delta(\rho_j) = \delta(\eta)$, for every $i \in I$ there exists $j_i \in J$ such that $\delta(\rho_{j_i}) \in U_i$. Writing $i=(\bar{\varphi}, \bar{p},\bar{q}, r,b)$, we can find $b_i \models r$ such that 
\begin{equation}\tag{$\star$}
\begin{split}
    \varphi_1(x,b_i) \in \delta(\rho_{j_i})(p_1), \dots,  \varphi_m(x,b_i) \in \delta(\rho_{j_i})(p_m), \\
\varphi_{m+1}(x,b_i) \in q_1, \dots , \varphi_{m+n}(x,b_i) \in q_n.
\end{split}
\end{equation}



For each $i\in I$, choose $\sigma_i \in \aut(\C)$ such that $\sigma(b_i)=b$. 
Then, by $(\star)$, we have $\varphi_{m+1}(x,b) \in \sigma_i(q_1), \dots , \varphi_{m+n}(x,b) \in \sigma_i(q_n)$.

Since for varying $i \in I$ the tuples $(q_1,\dots,q_n)$ range over all finite tuples from $\bar{\mathcal{J}}$ and the tuples $(\varphi_{m+1}(x,b),\dots, \varphi_{m+n}(x,b))$ over all possible tuples of formulas belonging, respectively, to $q_1,\dots,q_n$, we see that $\lim_i \sigma_i |_{\bar{\mathcal{J}}} = \textrm{Id}_{\bar{\mathcal{J}}}$. So $\lim_i \sigma_i u = u$ and clearly $\sigma_i u \in \mathcal{M}$. 

Also by $(\star)$, 
$\varphi_1(x,b) \in \sigma_i (\rho_{j_i} (p_1)), \dots,  \varphi_m(x,b) \in \sigma_i (\rho_{j_i}(p_m))$. Again, since for varying $i \in I$ the tuples $(p_1,\dots,p_m)$ range over all finite tuples from $\bar{\mathcal{J}}$ and the tuples $(\varphi_1(x,b),\dots, \varphi_m(x,b))$ over all possible tuples of formulas belonging, respectively, to $\eta(p_1),\dots,\eta(p_m)$, we see that $\lim_i (\sigma_i \rho_{j_i}) |_{\bar{\mathcal{J}}} = \eta |_{\bar{\mathcal{J}}}$.
Since $\rho_{j_i} = \rho_{j_i}u=u\rho_{j_i}$ and $\eta=\eta u$, we get $\lim_i \sigma_i u \rho_{j_i} = \eta$.

Hence, by the previous two paragraphs and Lemma \ref{tau closure equivalence}, we conclude that $\eta$ is in the $\tau$-closure of $(\rho_j)_{j \in J}$ as required.
\end{proof}

\section{Products of stable and NIP hyperdefinable sets}\label{appendixB}

We will prove that the properties of stability and NIP for hyperdefinable sets are both preserved under (possibly infinite) Cartesian products. They are also clearly closed under taking type-definable subsets. 
This yields the following:

\begin{cor} \label{finest eq exist for fixed param}
An arbitrary intersection of $\emptyset$-type-definable equivalence relations $(E_i)_{i<\mu}$ with stable [resp. NIP] quotients on a $\emptyset$-type-definable set $X$ is an equivalence relation with stable [resp. NIP] quotient on $X$. Moreover, a finest $\emptyset$-type-definable equivalence relation on $X$ with stable [resp. NIP] quotient always exists.
\end{cor}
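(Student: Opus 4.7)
The plan is to reduce both claims to two preservation results for stability and NIP of hyperdefinable sets that the text announces as being proved in Appendix \ref{appendixB}: closure under (possibly infinite) Cartesian products, and closure under type-definable subsets. Once those are in hand, the argument becomes purely formal.

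For the first assertion, I set $E := \bigcap_{i<\mu} E_i$, which is manifestly a $\emptyset$-type-definable equivalence relation on $X$, and consider the natural diagonal map $\iota \colon X/E \to \prod_{i<\mu} X/E_i$ sending $a/E$ to $(a/E_i)_{i<\mu}$. The map is well-defined and injective precisely because $E$ is the intersection of the $E_i$'s, so $\iota$ identifies $X/E$ with a $\emptyset$-type-definable subset of the product $\prod_{i<\mu} X/E_i$. Since each factor is stable [resp.\ NIP] by hypothesis, closure under Cartesian products (Appendix \ref{appendixB}) gives stability [resp.\ NIP] of the product, and then closure under type-definable subsets (also Appendix \ref{appendixB}) transfers this property to $X/E$.

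For the moreover part, I let $\mathcal{E}$ be the collection of all $\emptyset$-type-definable equivalence relations on $X$ whose quotient is stable [resp.\ NIP]. This collection is a set, since each such relation is determined by a partial type over $\emptyset$ in variables $(x,y)$, and it is nonempty (the relation whose unique class is $X$ itself trivially belongs to $\mathcal{E}$). Applying the first part of the corollary to $\mathcal{E}$ yields $E_{\min} := \bigcap_{F \in \mathcal{E}} F$ as a $\emptyset$-type-definable equivalence relation on $X$ with stable [resp.\ NIP] quotient, and it is the finest such by construction.

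The only nontrivial input is the product preservation step from Appendix \ref{appendixB}. The stable version is folklore, but the NIP version is, as the text points out, new and more involved; if one were proving the whole package from scratch, that appendix result would be the main obstacle. Granting it, the corollary itself is a short formal consequence via the diagonal embedding plus a Zorn-style (in fact, direct intersection) construction of $E_{\min}$.
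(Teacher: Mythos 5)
Your proof is correct and follows essentially the same route as the paper: identify $X/\bigcap_{i<\mu}E_i$ with a type-definable subset of $\prod_{i<\mu}X/E_i$ via the diagonal map, invoke preservation of stability [resp.\ NIP] under products and type-definable subsets from Appendix B, and obtain the finest relation as the intersection of all relations in the (nonempty, set-sized) family. The remarks on well-definedness of the intersection and nonemptiness of the family are minor elaborations the paper leaves implicit.
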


\begin{proof}
The first part follows from the fact that stability and NIP are preserved under (possibly infinite) Cartesian products and taking type-definable subsets, and the observation that the hyperdefinable set $$\bslant{X}{\bigcap_{i<\mu}E_i}$$ can be naturally identified with a type-definable subset of $$\prod_{i<\mu}X/E_i.$$
     
     For the moreover part, consider the $\emptyset$-type-definable equivalence relation on $X$ defined as the intersection of all $\emptyset$-type-definable equivalence relations on $X$ with stable [resp. NIP] quotient, and use the first part.
\end{proof}

First of all, note that directly from Definitions \ref{definition: stability} and \ref{definition: NIP} and compactness, it follows that in order to get preservation of stability and NIP under possibly infinite Cartesian products, it is enough to show preservation of stability and NIP under products of two hyperdefinable sets.
Thus, for the remainder of the appendix, we consider two hyperdefinable sets $X/E$ and $Y/F$ with $X,Y\subseteq \C^\lambda$, where $\lambda$ is smaller than the degree of saturation of $\C$ (note here that without loss of generality we can assume that $X$ and $Y$ leave on tuples of the same length $\lambda$, as if $X \subseteq \C^\alpha$, then for any $\beta$ we can replace $X$ by $X \times \C^\beta \subseteq \C^{\alpha + \beta}$ and $E$ by the relation which is just $E$ on the first $\alpha$ coordinates).

First, we prove preservation of stability. This was stated in \cite[Remark 1.4]{MR3796277}, and the proof we present was suggested by Anand Pillay.

\begin{prop}\label{stable preserved product}
    Let $X/E$ and $Y/F$ by stable hyperdefinable sets. Then, $X/E\times Y/F$ is a stable hyperdefinable set.
\end{prop}

\begin{proof}
    Suppose the conclusion does not hold. Let $(a_i,b_i,c_i)_{i<\omega}$ be an indiscernible sequence witnessing unstability of $X/E\times Y/F$. That is, for all $i,j<\omega$ $a_i\in X/E$, $b_i\in Y/F$, and $\tp(a_i,b_i,c_j)\neq \tp(a_j,b_j,c_i)$ for all $i\neq j$. By compactness, without loss of generality we may replace $\omega$ by $\mathbb{Q}$.

    \begin{claim}
        $(c_j)_{j\neq 0}$ is indiscernible over $a_0$.
    \end{claim}

    \begin{proof}
        Note that it is enough to show that for any $i_1<\dots< i_n<0<i'<j_1< \dots < j_m$ $$c_{i_1},\dots c_{i_n},c_{j_1},\dots,c_{j_m} \equiv_{a_0}c_{i_1},\dots c_{i_{n-1}},c_{i'},c_{j_1},\dots,c_{j_m}.$$ 
Suppose that the conclusion does not hold. This implies that for some setting as above we have 
$$(c_{i_n},a_0)\not\equiv_K (c_{i'},a_0),$$
where $$K:=\{c_{i_k}: k=1,\dots,n-1 \}\cup \{c_{j_k}: k=1,\dots,m \}.$$
On the other hand, by the indicernibility of  $(a_i,b_i,c_i)_{i\in \mathbb{Q}}$, we get that $(a_i,c_i)_{i\in (i_{n-1},j_1)}$ is indiscernible over $K$; in particular,  $(c_{i_n},a_0)\equiv_K (c_{0},a_{i'})$. Hence, 
$$(c_{i'},a_0)\not\equiv_K (c_0, a_{i'}).$$ 
Therefore, the sequence $(a_i,c_iK)_{i\in (i_{n-1},j_1)}$ contradicts the stability of $X/E$.
    \end{proof}

    \begin{claim}
        $\tp(a_0,b_0,c_j)$ is constant for $j>0$, $\tp(a_0,b_0,c_j)$ is constant for $j<0$, and $\tp(a_0,b_0,c_1)\neq \tp(a_0,b_0,c_{-1})$.
    \end{claim}

    \begin{proof}
        The fact that it is constant follows from the indiscernibility of the original sequence $(a_i,b_i,c_i)_{i\in \mathbb{Q}}$. Moreover, we have $$\tp(a_0,b_0,c_{-1})\neq \tp(a_{-1},b_{-1},c_0)=\tp(a_0,b_0,c_{1}).$$ 
    \end{proof}

    From the claims, it follows that for any rational numbers  $k$ and $i_1<\dots< i_n<k<j_1< \dots < j_n$ all distinct from $0$, there exists $b'_{k,\bar i,\bar j}\in Y/F$ (where $\bar i=(i_1,\dots,i_n)$ and $\bar j=(j_1,\dots,j_n)$) such that \begin{align*}
    b'_{k,\bar i, \bar j}c_{i_1}\equiv_{a_0}\cdots\equiv_{a_0} b'_{k,\bar i, \bar j}c_{i_n}\equiv_{a_0}b_0c_{-1}\\
    b'_{k,\bar i,\bar j}c_{j_1}\equiv_{a_0}\cdots\equiv_{a_0} b'_{k, \bar i, \bar j}c_{j_n}\equiv_{a_0}b_0c_{1}.
\end{align*}
Thus, by compactness, there exists a sequence $(b_k')_{k \in \mathbb{Q}\setminus \{0\}}$ in $Y/F$ such that for $i \in \mathbb{Q} \setminus \{0\}$ we have: $b_k'c_i \equiv_{a_0} b_0c_{-1}$ when $i<k$, and $b_k'c_i \equiv_{a_0} b_0c_{1}$ when $i>k$. Hence, by compactness and Ramsey's theorem, there is a sequence $(b''_i,c'_i)_{i<\omega}$ with $b_i'' \in Y/F$, which is indiscernible over $a_0$ and $$\tp(b''_i,c'_j/a_0)\neq tp(b''_j,c'_i/a_0)$$ for all $i<j$, contradicting stability of $Y/F$.
\end{proof}

Now, we turn to the NIP case.
See Section \ref{section: Stable vs WAP} for the definition of the family $\mathcal{F}_{X/E}$ and references concerning it.
The next characterization of the functions of the family $\mathcal{F}_{X/E}$ with NIP follows by compactness and Ramsey's theorem.

\begin{lema}\label{even odd nip}
    For any $f(x,y)\in  \mathcal{F}_{X/E}$ the following are equivalent:
    \begin{enumerate}
        \item $f$ has IP.
        \item There exists an indiscernible sequence $(b_i)_{i<\omega}$, $a\in X$, and real numbers $r<s$ such that 
$$ f(a,b_i)\leq r\iff i \text{ is even}, $$ $$f(a,b_i) \geq s\iff i \text{ is odd.}$$
    \end{enumerate}
\end{lema}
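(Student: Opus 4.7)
The plan is to verify both implications via the standard compactness and Ramsey machinery, as the hint suggests. The direction $(2)\Rightarrow(1)$ uses only the indiscernibility and moving points by automorphisms, while $(1)\Rightarrow(2)$ requires carefully extracting an indiscernible sequence while preserving the alternating pattern.

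For $(2)\Rightarrow(1)$, I will show directly that $(a,(b_i)_{i<\omega})$ witnesses IP of $f$. Given any finite indices $j_1<\dots<j_k$ and any partition $\{j_1,\dots,j_k\}=P\sqcup M$ (the desired ``$\leq r$'' and ``$\geq s$'' sides), I will choose $i_1<\dots<i_k$ in $\omega$ such that $i_l$ is even iff $j_l\in P$. The indiscernibility of $(b_i)_{i<\omega}$ then yields $\sigma\in\aut(\C)$ sending $b_{i_l}$ to $b_{j_l}$ for each $l$, and $\sigma(a)$ has the required property since $f(\sigma(a),b_{j_l})=f(a,b_{i_l})$, which is $\leq r$ exactly when $i_l$ is even, i.e., when $j_l\in P$. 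A standard compactness argument then promotes this to IP of $f$.

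For $(1)\Rightarrow(2)$, assume $f$ has IP. A compactness argument provides $r<s$ and a sequence $(b_i)_{i<\omega}$ such that every $S\subseteq\omega$ is realized by some $a_S\in X$ with $f(a_S,b_i)\leq r$ for $i\in S$ and $f(a_S,b_i)\geq s$ for $i\notin S$. I will then apply the Ramsey-plus-compactness extraction to obtain an indiscernible sequence $(b_i')_{i<\omega}$ whose EM-type is finitely realized in that of $(b_i)_{i<\omega}$. It suffices to find $a\in X$ with $f(a,b_i')\leq r$ for all even $i$ and $f(a,b_i')\geq s$ for all odd $i$, since $r<s$ forces the two ``iff'' clauses in (2) automatically. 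By compactness this reduces to finite consistency of the corresponding partial type: for any $j_1<\dots<j_k$, EM-type containment produces $i_1<\dots<i_k$ with $(b_{i_l})_l\equiv(b_{j_l}')_l$, and applying the original IP property to $S:=\{i_l : j_l \text{ even}\}$ delivers a witness realizing the required pattern on $(b_{j_l}')_l$.

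The main subtlety I anticipate is that one cannot hope to make $(b_i')_{i<\omega}$ indiscernible \emph{over} $a$, because that would force $f(a,b_i')$ to be constant in $i$, directly contradicting the alternation demanded by (2). Thus the Ramsey extraction must be carried out over $\emptyset$, and the ``alternating'' parameter $a$ is produced only a posteriori via the finite-consistency argument above; this is the only non-routine point in the proof. Finally, I will want to make sure that my working notion of ``$f$ has IP'' (realizability of arbitrary ``$\leq r$''/``$\geq s$'' patterns on $X$) matches the continuous-logic definition in the reference \cite{fernández2024ndependent}, but this is standard since $f$ factors through $X/E$.
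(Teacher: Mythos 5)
Your proof is correct and is exactly the ``compactness and Ramsey's theorem'' argument that the paper invokes without writing out: for $(2)\Rightarrow(1)$ one moves the even/odd pattern onto arbitrary finite patterns via indiscernibility and $\emptyset$-invariance of $f$, and for $(1)\Rightarrow(2)$ one extracts an indiscernible sequence over $\emptyset$ realizing the EM-type of an IP witness and then recovers the alternating parameter $a$ by finite consistency. Your remark that the extraction must be over $\emptyset$ (not over $a$) correctly identifies the only delicate point, so nothing further is needed.
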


\begin{prop}\label{NIP formulas have limit}
   For every $f(x,y)\in\mathcal{F}_{X/E}$ and for every limit ordinal $\gamma \leq \kappa$ (where $\kappa$ is the degree of saturation of $\C$), $f(x,y)$ has NIP if and only if for every indiscernible sequence $(b_i)_{i<\gamma}$ and $a\in X$ the sequence $(f(a,b_i))_{i<\gamma}$ is convergent.
\end{prop}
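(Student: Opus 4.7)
The main tool is Lemma \ref{even odd nip}, which characterizes IP for $f$ in terms of the existence of an indiscernible $\omega$-sequence realizing an alternating pattern of values relative to some $r<s$. The plan is to show each direction separately: the forward direction by extracting an alternating $\omega$-subsequence from any non-convergent $\gamma$-sequence, and the backward direction by stretching an $\omega$-witness of IP to length $\gamma$ via compactness, so that the resulting $\gamma$-sequence has two cofinal subsequences separated by $r<s$ and therefore cannot converge.

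\textbf{Forward direction.} Suppose $f$ has NIP, fix an indiscernible sequence $(b_i)_{i<\gamma}$ and $a\in X$, and assume for contradiction that $(f(a,b_i))_{i<\gamma}$ is not convergent. Since $f$ is bounded, non-convergence yields real numbers $r<s$ such that both $A:=\{i<\gamma: f(a,b_i)\le r\}$ and $B:=\{i<\gamma: f(a,b_i)\ge s\}$ are cofinal in $\gamma$ (otherwise $\limsup_i f(a,b_i)=\liminf_i f(a,b_i)$ and the sequence would converge). Using that $\gamma$ is a limit ordinal, I would recursively pick indices $i_0<j_0<i_1<j_1<\dots$ with $i_n\in A$ and $j_n\in B$. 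The subsequence $(b_{i_0},b_{j_0},b_{i_1},b_{j_1},\dots)$ remains indiscernible, and together with $a$ realizes exactly the alternating pattern from Lemma \ref{even odd nip}$(2)$. Hence $f$ has IP, a contradiction.

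\textbf{Backward direction.} Suppose $f$ has IP. By Lemma \ref{even odd nip}, fix an indiscernible sequence $(b_i^*)_{i<\omega}$, an $a^*\in X$, and $r<s$ realizing the alternating pattern. Fix any partition $\gamma=A\sqcup B$ with both $A$ and $B$ cofinal in $\gamma$ (possible since $\gamma$ is a limit ordinal). Consider, in variables $x$ and $(y_i)_{i<\gamma}$, the partial type $\Sigma$ over $\emptyset$ asserting:
\begin{itemize}
\item $\pi(x)$, where $\pi$ is the partial type defining $X$;
\item indiscernibility of $(y_i)_{i<\gamma}$ (expressed as the usual schema of equalities of formulas on increasing tuples);
\item $f(x,y_i)\le r$ for $i\in A$ and $f(x,y_i)\ge s$ for $i\in B$.
\end{itemize}
A finite fragment of $\Sigma$ mentions finitely many indices $i_1<\dots<i_n$; we realize it by setting $x:=a^*$ and $y_{i_k}:=b^*_{m_k}$ where $m_1<\dots<m_n$ in $\omega$ are chosen with $m_k$ even if $i_k\in A$ and odd if $i_k\in B$ (which can always be arranged). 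The finitely many indiscernibility instances hold because any increasing tuple in $(b^*_k)_{k<\omega}$ has the prescribed EM-type, and the value conditions hold by construction. Hence $\Sigma$ is consistent; realizing it in a sufficiently saturated model yields an indiscernible sequence $(b_i)_{i<\gamma}$ with some $a\in X$ such that $(f(a,b_i))_{i<\gamma}$ takes values $\le r$ cofinally often and $\ge s$ cofinally often, so it cannot converge; this contradicts the hypothesis.

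\textbf{Main subtlety.} The only real issue is saturation: the type $\Sigma$ has $\gamma$ free variables, so a priori its realization may require a model more saturated than $\C$ when $\gamma$ is close to $\kappa$. This is harmless, however, because we may pass to a bigger monster $\C''\succ\C$ to realize $\Sigma$; then $a$ still lies in $X(\C'')$, the sequence $(b_i)_{i<\gamma}$ is indiscernible over $\emptyset$, and the hypothesis of the proposition (interpreted as quantification over indiscernible sequences living in any elementary extension, which is the standard convention) is contradicted. Alternatively, if one insists on working inside $\C$, one can first observe that, since the proposition is an assertion about the first-order theory $T$ of $\C$, its validity is independent of the ambient monster, so one may harmlessly enlarge $\C$ when needed.
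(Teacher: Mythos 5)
Your proof is correct and follows essentially the same route as the paper's: the forward direction extracts an alternating $\omega$-subsequence from a non-convergent $\gamma$-sequence (the paper phrases the choice of $r<s$ via an accumulation point rather than $\limsup/\liminf$, but the argument is identical) and invokes Lemma \ref{even odd nip}, while the backward direction stretches the $\omega$-witness of IP to length $\gamma$ by compactness. The only place you deviate is the detour through a bigger monster $\C''$ in the backward direction: this is unnecessary (and your first justification misreads the statement, since the bound $\gamma\le\kappa$ is there precisely so the witnesses live in $\C$) — one can simply keep $a:=a^*\in X(\C)$ from the Lemma \ref{even odd nip} witness and extend only the $b$-sequence inside $\C$, realizing the type one block of variables at a time over a parameter set of size $<\kappa$ by $\kappa$-saturation, which is what the paper does.
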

\begin{proof}
    
$(\Leftarrow)$  Suppose $f(x,y)$ has IP. By Lemma \ref{even odd nip}, there exist an indiscernible sequence $(b_{i})_{i<\omega}$, an element $a\in X$, and $r< s$ such that \begin{align*}
    f(a,b_i) \leq r\iff i \text{ is even},\\
    f(a,b_i) \geq s\iff i \text{ is odd}.
\end{align*}

By compactness, we may extend the indiscernible sequence $(b_{i})_{i<\omega}$ to a new indiscernible sequence $(b'_{i})_{i< \gamma}$ such that for any $i< \gamma$: $f(a,b'_i)\leq r$ if $i$ is even, and $f(a,b'_{i})\geq s$ if $i$ is odd (where $i$ is even if it is an even natural number or an ordinal of the from $\delta +n$ for some limit ordinal $\delta$ and even natural number $n$; odd ordinals are defined analogously). 
This contradicts the assumption that  the sequence $(f(a,b_i'))_{i<\gamma}$ is convergent.

$(\Rightarrow)$  Suppose the conclusion does not hold for an indiscernible sequence $(b_i)_{i <\gamma}$ and $a\in X$. That is, for every $L$ there is some $\epsilon>0$ such that for cofinally many $i<\gamma$ we have $\left|f\left(a, b_i\right)-L\right|>\epsilon$ (formally this means that for every $\beta < \gamma$ there is $i \in (\beta,\gamma)$ such that $\left|f\left(a, b_i\right)-L\right|>\epsilon$).

Since $\left\{f\left(a, b_i\right) \mid i< \gamma \right\} \subset[r_1,r_2]$ for some real numbers $r_1<r_2$, the sequence $(f\left(a, b_i\right))_{i< \gamma}$ must have some accumulation point $L_{0}$. That is, for any $\epsilon >0$, for cofinally many $i<\gamma$ we have $\left|f\left(a, b_i\right)-L_{0}\right| \leq \epsilon$.

Since $\left(f\left(a, b_i\right)\right)_{i< \gamma}$ does not converge, there is $\epsilon>0$ such that for cofinally many $j<\gamma$ we have $\left|f\left(a, b_j\right)-L_{0}\right|>\epsilon$, and since \(L_{0}\) is an accumulation point, there are cofinally many $i< \gamma$ for which \(\mid f\left(a, b_i\right)-\) $L_{0} \left\lvert\, \leq \frac{\epsilon}{2}\right.$.

Thus, there must be either cofinally many $j<\gamma$ such that $f\left(a, b_j\right)>L_{0}+\epsilon$ or cofinally many $j<\gamma$ such that $f\left(a, b_j\right)<L_{0}-\epsilon$. Consider the former case (the latter is analogous). Let $r=L_{0}+\frac{\epsilon}{2}$ and $s=L_{0}+\epsilon$. 

We now construct an indiscernible sequence $(c_i)_{i<\omega}$ which, together with $a$, will witness that $f(x,y)$ has IP.
Let $c_{0}=b_{i}$ for some $b_{i}$ such that $f(a,b_i) \leq r$. This is possible since there are cofinally many $i<\gamma$ with $f(a,b_{i})$ within \(\frac{\epsilon}{2}\) of \(L_{0}\). Let \(c_{1}=b_{j}\) with $j>i$ be such that $f(a,b_{j}) \geq s$. Similarly, this is possible since there are cofinally many \(j< \gamma\) with $ f\left(a, b_j\right)-L_{0}>\epsilon$. Iterating this process infinitely many times, we get a subsequence $(c_i)_{i<\omega}$ of $(b_i)_{i<\gamma}$ which is indiscernible, 
$f\left(a,c_{i}\right) \leq r$ if and only if \(i\) is even, and $f\left(a,c_{i}\right) \geq s$ if and only if \(i\) is odd. Thus, this sequence is as required (by Lemma \ref{even odd nip}).
\end{proof}


 
\begin{prop}\label{type stabilizes NIP}
    Let $X/E$ be a hyperdefinable set with $X\subseteq \C^\lambda$. If $X/E$ has NIP, for any indiscernible sequence $(b_i)_{i<(\lvert T\rvert +\lambda)^+}$ of tuples from $\C$ of length at most $\lambda$ and any $a/E\in X/E$ there exists $\alpha < (\lvert T\rvert +\lambda)^+$ such that $(b_i)_{\alpha<i <(\lvert T\rvert +\lambda)^+}$ is indiscernible over $a/E$.
\end{prop}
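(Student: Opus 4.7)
I will prove the statement by an NIP-style per-formula stabilization argument combined with a cofinality count on the regular cardinal $\mu := (\lvert T\rvert+\lambda)^+$. The key step mimics the classical proof that in NIP theories any indiscernible sequence becomes indiscernible over a fixed small parameter set past a tail, adapted here to the hyperimaginary setup via Definition \ref{definition: NIP}. The starting point is the orbit characterization of types over hyperimaginaries: $\tp(\bar b/a/E) = \tp(\bar b'/a/E)$ iff there exists $a^* \in \C^\lambda$ with $a^* E a$ and $\tp(\bar b, a/\emptyset) = \tp(\bar b', a^*/\emptyset)$. Correspondingly, classical types over $a/E$ are controlled by classical first-order formulas $\phi(\bar y, \bar x')$ which respect $E$ on $\bar x'$ (meaning $\phi(\bar b, a_{\bar j}) \Leftrightarrow \phi(\bar b, a'_{\bar j})$ whenever $a, a' \in X$ satisfy $a E a'$, for any fixed coordinate selection $\bar j$).

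Fix $n \geq 1$, an $E$-respecting classical formula $\phi(\bar y_1, \dots, \bar y_n, \bar x')$, and a subtuple $\bar a$ of $a$ of the length of $\bar x'$. I claim there exists $\alpha_{n, \phi, \bar a} < \mu$ such that $\phi(\bar b_{\bar i}, \bar a)$ is constant over increasing $n$-tuples $\bar i$ with $i_1 > \alpha_{n, \phi, \bar a}$. Suppose not; then by the cofinality of $\mu$ one extracts pairwise order-disjoint, order-increasing tuples $\bar i_0 < \bar i_1 < \bar i_2 < \cdots$ on which $\phi(\bar b_{\bar i_k}, \bar a)$ alternates in $k \pmod 2$. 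As a block subsequence of the $\emptyset$-indiscernible sequence $(b_i)$, the sequence $(\bar b_{\bar i_k})_{k<\omega}$ is $\emptyset$-indiscernible. By Ramsey's theorem, compactness, and the strong $\aleph_0$-homogeneity of $\C$, I refine to an indiscernible subsequence $(\bar c_k)_{k<\omega}$ that preserves the alternation and such that the enriched sequence $((a, \bar c_{2k}, \bar c_{2k+1}))_{k<\omega}$ is also $\emptyset$-indiscernible. Passing to the quotient, $((a/E, \bar c_{2k}, \bar c_{2k+1}))_{k<\omega}$ is $\emptyset$-indiscernible. Now if $\tp(a/E, \bar c_0) = \tp(a/E, \bar c_1)$, the orbit characterization would give some $a^* E a$ with $\tp(a, \bar c_0) = \tp(a^*, \bar c_1)$, so $\phi(\bar c_0, \bar a) = \phi(\bar c_1, \bar a^*) = \phi(\bar c_1, \bar a)$ by $E$-respectfulness of $\phi$, contradicting the alternation. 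Hence $\tp(a/E, \bar c_0) \neq \tp(a/E, \bar c_1)$, contradicting the NIP of $X/E$ by Definition \ref{definition: NIP}.

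There are at most $\lvert T\rvert$ classical formulas over $\emptyset$, at most $\lvert T\rvert + \lambda$ finite coordinate selections from the $\lambda$-tuple $a$, and $n$ ranges over $\omega$; so in total at most $\lvert T\rvert + \lambda$ triples $(n, \phi, \bar a)$. As $\mu = (\lvert T\rvert + \lambda)^+$ is regular, $\alpha := \sup_{(n, \phi, \bar a)} \alpha_{n, \phi, \bar a} < \mu$, and on the tail $(b_i)_{\alpha < i < \mu}$ every $E$-respecting classical formula takes a constant truth value on increasing $n$-tuples at the matching subtuple of $a$. A standard compactness argument (converting the ``exists $a^* E a$''-condition into a consistency statement for the partial type built from $E$-respecting formulas together with a finite fragment of $\rho$ defining $E$) then yields indiscernibility of the tail over $a/E$. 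The main obstacle is the Ramsey-plus-compactness extraction, where one must simultaneously preserve the $\phi$-alternation and achieve $\emptyset$-indiscernibility of the pair-sequence enriched by the parameter $a$; this is handled by a standard construction using the strong $\aleph_0$-homogeneity of $\C$ to realize the required Ehrenfeucht--Mostowski type.
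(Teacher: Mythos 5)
Your overall architecture --- per-witness stabilization past some $\alpha$, a pigeonhole over the regular cardinal $(\lvert T\rvert+\lambda)^+$ to find a single witness failing cofinally, extraction of an alternating indiscernible configuration, and a contradiction with NIP of $X/E$ --- is exactly the paper's. The genuine gap is in your choice of witnesses: you take them to be classical first-order formulas $\phi(\bar y,\bar x')$ that ``respect $E$'', and you assert that such formulas control types over the hyperimaginary $a/E$. This fails for a genuine (non-eliminable) hyperimaginary. An $E$-respecting formula defines a clopen subset of the space of types over $a/E$, so if such formulas separated points that space would be zero-dimensional; but for, say, $E=$ ``infinitesimally close'' on a circle with arc predicates, the space of types over $a/E$ contains connected pieces. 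Concretely, you face a dichotomy: if you insist on genuinely $E$-respecting $\phi$, then stabilizing all of them on a tail does \emph{not} imply the tail is indiscernible over $a/E$ (your closing ``standard compactness argument'' is exactly where this is hidden), and a failure of indiscernibility over $a/E$ need not be witnessed by any such $\phi$, so the per-formula reduction never gets started. If instead you use the formulas that compactness actually produces from $\tp(\bar b/a/E)\ne\tp(\bar b'/a/E)$, namely $\exists \bar x'(\rho'(\bar x,\bar x')\wedge\phi(\bar y,\bar x'))$ with $\rho'$ a finite fragment of the partial type defining $E$, these are only approximately $E$-invariant (finite fragments of $\rho$ are not transitive), so they are not $E$-respecting and your deduction of $\tp(a/E,\bar c_0)\ne\tp(a/E,\bar c_1)$ from the alternation collapses.

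This is precisely why the paper's proof replaces classical formulas by the real-valued family $\mathcal{F}_{X/E}$: by \cite[Proposition 2.1]{10.1215/00294527-2022-0023} these functions do separate types over hyperimaginaries, so any failure of indiscernibility over $a/E$ is witnessed by some $f\in\mathcal{F}_{X/E}$ and rationals $r<s$; the counting at $\lvert T\rvert+\lambda$ is rescued by a density-in-supremum-norm argument (\cite[Corollary 2.4]{10.1215/00294527-2022-0023}); and the resulting alternation contradicts NIP of $X/E$ via the function-based IP criterion (Lemma \ref{even odd nip} together with \cite[Lemma 5.7]{fernández2024ndependent}) rather than via Definition \ref{definition: NIP} directly. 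Your cofinality and extraction steps would survive essentially unchanged once the witnesses are taken from $\mathcal{F}_{X/E}$, but as written the proof does not go through.
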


\begin{proof}
    Let $z$ be the tuple of variables corresponding to some/each $b_i$. For a subtuple $y$ of $z$, by $b_i^y$ we will denote the subtuple of $b_i$ corresponding to the variables $y$.

Assume the conclusion does not hold. Then for every $\alpha <(\lvert T\rvert +\lambda)^+$ there is a finite subtuple $y_\alpha$ of $z$, natural number $k_\alpha$, and  two tuples of indices $\alpha<i_1^\alpha<\dots<i_{k_\alpha}^\alpha<(\lvert T\rvert +\lambda)^+$ and $\alpha<j_1^\alpha<\dots<j_{k_\alpha}^\alpha<(\lvert T\rvert +\lambda)^+$ such that 
$$\tp(a/E,b_{i_1^\alpha}^{y_\alpha},\dots, b_{i_{k_\alpha}^\alpha}^{y_\alpha}) \ne \tp(a/E,b_{j_1^\alpha}^{y_\alpha},\dots, b_{j_{k_\alpha}^\alpha}^{y_\alpha}).$$
Thus, by \cite[Proposition 2.1]{10.1215/00294527-2022-0023}, there exists a function 
$f_\alpha \colon \C^\lambda \times \C^{k_\alpha|y_\alpha|} \to \mathbb{R}$ from the family $\mathcal{F}_{X/E}$ and rationals $r_\alpha<s_\alpha$ satisfying 
$$f_\alpha(a,b_{i_1^\alpha}^{y_\alpha},\dots,b_{i_{k_\alpha}^\alpha}^{y_\alpha})\leq r_\alpha \wedge f_\alpha(a,b_{j_1^\alpha}^{y_\alpha},\dots,b_{j_{k_\alpha}^\alpha}^{y_\alpha}) \geq s_\alpha.$$
On the other hand, as explained in the proof of \cite[Corollary 2.4]{10.1215/00294527-2022-0023}, for every $m \in \omega$ the space of all functions $\C^\lambda \times \C^m \to \mathbb{R}$ from the family $\mathcal{F}_{X/E}$  with respect to the supremum norm has a dense subset $\mathcal{F}_m$ of cardinality $\lvert T\rvert +\lambda$. It is then clear that for every $\alpha <(\lvert T\rvert +\lambda)^+$ the function  $f_\alpha$ above can be chosen from the family $\mathcal{F}_{k_\alpha |y_\alpha|}$.

Therefore, there is a finite subtuple $y$ of $z$, natural number $k$, function $f \colon \C^\lambda \times \C^{k|y|} \to \mathbb{R}$ from the family $\mathcal{F}_{X/E}$, and rationals $r<s$ such that $k_\alpha=k$, $y_\alpha = y$, $f_\alpha=f$, $r_\alpha=r$, and $s_\alpha=s$ for cofinally many $\alpha <(\lvert T\rvert +\lambda)^+$. Then, we can construct inductively a sequence $\II=(i_1^l,\dots,i_k^l)_{l<\omega}$ such that $i_1^l<\dots<i_k^l<i_1^{l+1}$ for all $l<\omega$ and: \begin{itemize}
        \item $f(a,b_{i^l_1}^y,\dots,b_{i^l_{k}}^y)\leq r \iff l \text{ is even}$,
        \item $f(a,b_{i^l_1}^y,\dots,b_{i^l_{k}}^y)\geq s \iff l \text{ is odd}$.
    \end{itemize}
    As the sequence $(b_{i_1^l}^y,\dots, b_{i_k^l}^y)_{l<\omega}$ is indiscernible, this implies (by Lemma \ref{even odd nip}) that the function $f$ has IP. 
By \cite[Lemma 5.7]{fernández2024ndependent}, this is a contradiction with the assumption that $X/E$ has NIP.
\end{proof}

\begin{cor}\label{nip preserved product}
    Let $X/E$ and $Y/F$ be hyperdefinable sets with $NIP$. Then $X/E\times Y/F$ has NIP.
\end{cor}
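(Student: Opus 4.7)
The plan is to argue by contradiction: assuming $X/E\times Y/F$ fails NIP, I will extract from the witness separate ``NIP-instability'' data for each factor, and then use Proposition \ref{type stabilizes NIP} applied to each of $X/E$ and $Y/F$ to stabilize a common tail of the parameter sequence, contradicting the initial witness.

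Concretely, suppose $(b_i)_{i<\omega}$ is an indiscernible sequence and $d=(a,a')\in X/E\times Y/F$ satisfies that $((d,b_{2i},b_{2i+1}))_{i<\omega}$ is indiscernible and $\tp(d,b_0)\ne\tp(d,b_1)$. After possibly enlarging $\lambda$ to bound the lengths of the $b_i$'s (by replacing each of $X$ and $Y$ with an isomorphic hyperdefinable set living on longer tuples, so that Proposition \ref{type stabilizes NIP} remains applicable), I would use compactness and saturation of $\C$ to extend $(b_i)_{i<\omega}$ to a longer indiscernible sequence $(b_i)_{i<\mu}$ with $\mu:=(|T|+\lambda)^+$, preserving the pair condition $\tp(d,b_{2i},b_{2i+1})=\tp(d,b_0,b_1)$ for all $i<\mu$. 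This is a standard compactness argument, since these requirements form a partial type that is finitely satisfied in the original witness.

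Next, I would apply Proposition \ref{type stabilizes NIP} twice: to $X/E$ with the element $a$, obtaining $\alpha_1<\mu$ such that $(b_i)_{\alpha_1<i<\mu}$ is indiscernible over $a$; and to $Y/F$ with the element $a'$, obtaining $\alpha_2<\mu$ such that $(b_i)_{\alpha_2<i<\mu}$ is indiscernible over $a'$. Setting $\beta:=\max(\alpha_1,\alpha_2)$, the tail $(b_i)_{\beta<i<\mu}$ is indiscernible over both $a$ and $a'$, and hence over the hyperimaginary $d=(a,a')$. Picking any $i<\mu$ with $2i>\beta$, this tail-indiscernibility yields $\tp(d,b_{2i})=\tp(d,b_{2i+1})$, while the extended pair condition gives $\tp(d,b_0)=\tp(d,b_{2i})$ and $\tp(d,b_1)=\tp(d,b_{2i+1})$. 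Chaining these equalities produces $\tp(d,b_0)=\tp(d,b_1)$, contradicting the choice of the witness.

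The only delicate point is verifying that Proposition \ref{type stabilizes NIP} can indeed be applied to each factor: it requires the $b_i$'s to have length at most the ambient $\lambda$, whereas Definition \ref{definition: NIP} imposes no restriction on the length of the $b_i$'s. I expect this to be routine bookkeeping, handled by absorbing the length of the $b_i$'s into $\lambda$ as sketched above; once this is in place, the core of the proof is the clean two-factor application of Proposition \ref{type stabilizes NIP} followed by the pair-condition chase.
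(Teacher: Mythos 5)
Your overall strategy (use Proposition \ref{type stabilizes NIP} on each factor to stabilize a tail of a long indiscernible sequence) is the right one, and your bookkeeping about lengths and about extending the witness to length $(|T|+\lambda)^+$ is fine. But there is a genuine gap at the step where you combine the two applications: you claim that if the tail $(b_i)_{\beta<i<\mu}$ is indiscernible over $a$ and also indiscernible over $a'$, then it is indiscernible over the pair $d=(a,a')$. This implication is false in general. Indiscernibility over $a$ says that the types $\tp(a, b_{i_1},\dots,b_{i_n})$ are constant, and indiscernibility over $a'$ says the same for $\tp(a', b_{i_1},\dots,b_{i_n})$; neither controls the \emph{joint} types $\tp(a, a', b_{i_1},\dots,b_{i_n})$, which is what indiscernibility over $d$ requires. (Think of a ternary relation $R(x,y,z)$ with $R(a,a',b_i)$ holding exactly for even $i$, arranged so that all the binary traces involving only $a$ or only $a'$ are trivial: the sequence is then indiscernible over each of $a$, $a'$ separately but not over the pair.) Since the whole point of the corollary is to control the joint behaviour of the two coordinates, this is precisely the step that cannot be waved through.

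The paper's proof avoids this by running the two applications of Proposition \ref{type stabilizes NIP} \emph{sequentially} rather than in parallel: after stabilizing a tail $(c_i)_{\alpha<i}$ over $a/E$, it uses compactness and Ramsey to choose a representative $a'\mathrel{E}a$ of the class such that the \emph{enriched} sequence $(c_i a')_{\alpha<i}$ is itself indiscernible, and only then applies the proposition to this enriched sequence and the element $b/F$. Indiscernibility of $(c_i a')_{\beta<i}$ over $b/F$ keeps track of the joint types of $(b/F, a', c_{i_1},\dots,c_{i_n})$, which is exactly what yields indiscernibility of $(c_i)_{\beta<i}$ over the pair $(a/E, b/F)$. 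If you want to salvage your argument, you need to insert this absorption step (stabilize over $a$, absorb a representative of $a$ into the sequence, then stabilize over $a'$); the rest of your pair-condition chase then goes through.
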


\begin{proof}
Recall that without loss of generality, $X,Y \subseteq \C^\lambda$.
    Let $(c_i)_{i<(\lvert T\rvert +\lambda)^+}$ be an arbitrary indiscernible sequence of finite tuples from $\C$, and $(a/E,b/F)$ an arbitrary pair from $X/E\times Y/F$. 

By Proposition \ref{type stabilizes NIP}, there is $\alpha < (\lvert T\rvert +\lambda)^+$ such that $(c_i)_{\alpha <i<(\lvert T\rvert +\lambda)^+}$ is indiscernible over $a/E$. Hence, by compactness and Ramsey's theorem, we can find $a'Ea$ for which $(c_ia')_{\alpha <i<(\lvert T\rvert +\lambda)^+}$ is indiscernible. Applying Proposition \ref{type stabilizes NIP} again, but this time to the sequence $(c_ia')_{\alpha <i<(\lvert T\rvert +\lambda)^+}$, we get that there exists $\beta< (\lvert T\rvert +\lambda)^+$ such that $(c_ia')_{\beta <i<(\lvert T\rvert +\lambda)^+}$ is indiscernible over $b/F$. Then $(c_i)_{\beta <i<(\lvert T\rvert +\lambda)^+}$ is indiscernible over $(a/E,b/F)$. Hence, for every $f(x,y,z)\in \mathcal{F}_{X/E\times Y/F}$ with $|z|=|c_i|$, the sequence $(f(a,b,c_i))_{\beta <i<(\lvert T\rvert +\lambda)^+}$ is constant. Therefore, since $a/E$, $b/F$, and the sequence $(c_i)_{i<(\lvert T\rvert +\lambda)^+}$ were arbitrary, using Proposition \ref{NIP formulas have limit}, we conclude that every $f(x,y,z)\in \mathcal{F}_{X/E\times Y/F}$ has NIP. Therefore, $X/E\times Y/F$ has NIP by \cite[Lemma 5.7]{fernández2024ndependent}.

\end{proof}
\printbibliography
\end{document}